\documentclass[11pt,twoside]{amsart}
\usepackage{lmodern}
\usepackage{amsmath,amssymb,amsfonts,amsthm,mathtools}
\usepackage{mathrsfs}
\usepackage[a4paper,textwidth=15cm,textheight=20.5cm,includehead]{geometry}
\usepackage{xcolor}
\usepackage{cite}
\usepackage[hidelinks]{hyperref}
\hypersetup{
  pdftitle={Holomorphic and Formal First Integrals for Foliations of Codimension One on Complex Analytic Space Germs},
  pdfauthor={V\'ictor Le\'on and Bruno Sc\'ardua}
}
\usepackage{tikz}
\usepackage{tikz-cd}
\usepackage{enumitem}
\usepackage[hyperpageref]{backref}

\usepackage{graphicx}

\usepackage[utf8]{inputenc}
\usepackage[all]{xy}

\newcommand{\reg}{\mathrm{reg}\,}
\newcommand{\C}{\mathbb C}
\newcommand{\N}{\mathbb N}

\newcommand{\Q}{\mathbb Q}

\newcommand{\Pone}{\mathbb P^1}
\newcommand{\OO}{\mathcal O}
\newcommand{\OX}{\mathcal O}
\newcommand{\F}{\mathcal F}
\newcommand{\FF}{\mathcal F}

\newcommand{\J}{\mathcal J}
\newcommand{\T}{\mathcal T}

\newcommand{\mm}{\mathfrak m}

\newcommand{\wh}{\widehat}
\newcommand{\wt}{\widetilde}
\newcommand{\CC}{\mathbb C}
\newcommand{\NN}{\mathbb N}

\DeclareMathOperator{\Sing}{Sing}
\DeclareMathOperator{\codim}{codim}

\DeclareMathOperator{\Frac}{Frac}

\DeclareMathOperator{\Hom}{Hom}
\DeclareMathOperator{\Der}{Der}
\DeclareMathOperator{\sing}{sing}

\DeclareMathOperator{\Diff}{Diff}
\DeclareMathOperator{\Hol}{Hol}

\DeclareMathOperator{\ord}{ord}

\newcommand{\transv}{\mathrel{\text{\tpitchfork}}}
\makeatletter
\newcommand{\tpitchfork}{%
  \vbox{
    \baselineskip\z@skip
    \lineskip-.52ex
    \lineskiplimit\maxdimen
    \m@th
    \ialign{##\crcr\hidewidth\smash{$-$}\hidewidth\crcr$\pitchfork$\crcr}
  }%
}

\newtheorem{lema}{Lemma}
\newtheorem{cor}{Corollary}
\newtheorem{teo}{Theorem}
\newtheorem*{teo1}{Theorem A}
\newtheorem*{teo2}{Theorem B}
\newtheorem*{teo3}{Theorem C}

\newtheorem{claim}{Claim}

\newtheorem{propo}{Proposition}

\newtheorem{mainthm}{Theorem}

\theoremstyle{definition}
\newtheorem{remark}{Remark}

\newtheorem{defi}{Definition}
\newtheorem{exe}{Example}

\usepackage{fancyhdr}

\begin{document}

\pagestyle{fancy}


\title{Holomorphic and Formal First Integrals
for Foliations of Codimension One on
Complex Analytic Space Germs}

\author{V\'ictor Le\'on}
\address{V\'ictor Le\'on\\
ILACVN - CICN, Universidade Federal da Integra\c c\~ao Latino-Americana,\\
ITAIPU PARQUETEC, Foz de Igua\c cu-PR, 85867-900 - Brazil
}
\email[V. Le\'on]{victor.leon@unila.edu.br, vicaml19@gmail.com}

\author{Bruno Sc\'ardua}
\address{Bruno Sc\'ardua\\ Instituto de Matem\'atica - Universidade Federal do Rio de Janeiro,\\
CP 68530, Rio de Janeiro-RJ, 21945-970 - Brazil}
\email[B. Sc\'ardua]
{bruno.scardua@gmail.com}

\subjclass[2020]{Primary 32S65; Secondary 32S45, 37F75}
\keywords{holomorphic foliation, holomorphic first integral, formal first
integral, normal complex analytic space, resolution of singularities,
holonomy}


\begin{abstract}
\sloppy

We study holomorphic and formal first integrals for germs of codimension-one holomorphic foliations on normal complex analytic spaces. In dimension two, under the assumption that the dual graph of the exceptional divisor of a resolution is a tree, we prove that the foliation admits a holomorphic first integral if and only if its leaves are closed outside the singular point and only finitely many leaves accumulate at that point. This extends a classical integrability theorem of Mattei and Moussu to singular ambient spaces. We also prove a holomorphic prolongation theorem for normal quotient germs admitting a smooth quasi-\'etale cover and a smooth connected lift of a generic two-dimensional section. We record, in addition, a conditional formal prolongation statement under depth assumptions on the conormal powers and an injectivity condition for the corresponding differential-form obstruction modules. Under the quotient-prolongation hypothesis, and with a reduced tangent cone where formal restriction must be detected, the higher-dimensional integrability results follow from their surface counterparts. We give a reduced nonnormal example satisfying both dynamical conditions but admitting no holomorphic first integral, showing that normality is essential. Our arguments combine resolution of singularities, holonomy techniques, formal completion, and extension properties of holomorphic functions on normal analytic spaces.

\end{abstract}

\maketitle

\hyphenation{sin-gu-lari-ties}
\hyphenation{re-si-dues}

\tableofcontents


 \section{Introduction}\label{secintro}

This work is dedicated to the study of integrability of germs of holomorphic foliations on complex analytic spaces with singularities.

The case of a nonsingular ambient space was first addressed in the
fundamental work of Mattei--Moussu \cite{MatteiMoussu1980}, where the authors
consider germs of codimension-one holomorphic foliations at the origin
$0\in\C^n$, $n\geq2$.

In this work, we prove versions of some of the main results in \cite{MatteiMoussu1980} for germs of codimension-one holomorphic foliations on germs of normal complex analytic varieties.

Our first main result is:

\begin{mainthm}\label{thm:A}\normalfont
Let $(X,0)$ be a normal-tree complex analytic surface germ, and let $\F$
be a germ of a holomorphic foliation by curves on $(X,0)$. Then the
following conditions are equivalent:
\begin{enumerate}
\item[(i)] $\F$ admits a holomorphic first integral $f \in \OO_{X,0}$.
\item[(ii)] $\F$ has a representative $\F_U$ in a neighborhood $0 \in U \subset X$ such that:
\begin{enumerate}
\item[(ii.a)] the leaves of $\F_U$ are closed subsets of $U\setminus\{0\}$;
\item[(ii.b)] only a finite number of leaves of $\F_U$ adhere to the origin $0\in X$.
\end{enumerate}
\end{enumerate}
\end{mainthm}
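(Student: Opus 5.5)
The implication (i)$\Rightarrow$(ii) is the easy direction, and I would handle it first. Let $f\in\OO_{X,0}$ be a nonconstant first integral; I may assume $f$ is not a power and $f(0)=0$. Choose a representative $U$ on which $f$ is defined. The leaves of $\F_U$ are then contained in the level sets $f^{-1}(c)\cap(U\setminus\{0\})$, which are analytic curves. Each leaf is open and closed in the regular part of its level curve (minus $\Sing\F$), so it is a closed subset of $U\setminus\{0\}$. Only the leaves contained in $f^{-1}(0)$ adhere to $0$. Since $f^{-1}(0)$ is a germ of curve with finitely many irreducible branches, each containing finitely many leaves near $0$, this gives (ii.a) and (ii.b).

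For (ii)$\Rightarrow$(i) I would lift everything to a resolution. Let $\pi\colon(\tilde X,E)\to(X,0)$ be a resolution of the normal surface germ; since $(X,0)$ is normal-tree, the dual graph of $E$ is a tree. I would further compose with Seidenberg reduction of $\tilde\F=\pi^*\F$, which keeps the graph a tree because blow-ups of points preserve that property. The steps, in order, are as follows.

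\textbf{(1) Invariance of $E$.} Condition (ii.b) forces every component $E_j$ to be $\tilde\F$-invariant. Indeed, a dicritical component would produce infinitely many distinct leaves through its generic points, all adhering to $0$.

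\textbf{(2) Finite holonomy.} For each $E_j$, the holonomy group of the leaf $E_j^*=E_j\setminus(\Sing\tilde\F\cup\bigcup_{i\neq j}E_i)$ is finite. The argument is the Mattei--Moussu one. Condition (ii.a) makes the leaves near $E_j^*$ closed in $\tilde X\setminus E$. Condition (ii.b) says that only finitely many leaves accumulate on $E$, so every other leaf is trapped in a compact region away from $E$. Hence the orbits of the holonomy group $G_j\subset\Diff(\C,0)$ are finite on a small transversal. Then Mattei--Moussu's lemma applies: a finitely generated subgroup of $\Diff(\C,0)$ with finite orbits is finite, hence linearizable and cyclic.

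\textbf{(3) Local first integrals at the singular points.} At each singular point of $\tilde\F$ on $E$, reduced singularities with finite holonomy along an invariant separatrix are analytically linearizable with resonant rational eigenvalue ratio $-p/q<0$. Saddle-nodes are excluded because their strong or weak holonomy would have infinite orbits or would violate (ii.b). So near each such point there is a first integral $x^py^q$.

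\textbf{(4) Gluing.} Starting from one component, I would take a holomorphic first integral in a neighborhood of $E_j^*$: the invariant function $z\mapsto z^{|G_j|}$ on the transversal, saturated by the foliation. I then extend it across the corners by the local models of (3), and propagate to adjacent components. Because the dual graph is a tree, the propagation never closes a loop, so there is no monodromy obstruction. At each corner one only has to match the first integral on the two branches up to composition by a diffeomorphism of $(\C,0)$ and a power, which can always be arranged. The result is a holomorphic first integral $\tilde f$ on a neighborhood of $E$, constant on $E$. The tree hypothesis is exactly what I expect the argument to use here.

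\textbf{(5) Descent.} $\tilde f$ is constant on the connected compact set $E$, so it descends to a continuous function on $U$ that is holomorphic on $U\setminus\{0\}$. Normality of $(X,0)$, via Riemann extension, then gives $f\in\OO_{X,0}$, which is a first integral of $\F$.

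I expect the main obstacle to be step (4), and in particular showing that the local first integrals can be made compatible across every corner. Here I would first follow Mattei--Moussu's argument that finite holonomy of one component, together with linearizable corners, determines the holonomy of the adjacent component as a finite group. I would then normalize the local integrals to be powers of a common function, using the tree structure to fix the choices inductively from a root component.

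A secondary delicate point is step (2): deducing finite orbits from (ii.a) and (ii.b) requires controlling leaves in a neighborhood of $E$. It also requires keeping track of the fact that closedness is only assumed in $U\setminus\{0\}$, which corresponds to $\tilde U\setminus E$.
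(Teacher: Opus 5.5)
Your route is the paper's: resolve the pair keeping a tree, get invariance of $E$ from (ii.b), get finite component holonomy from Mattei--Moussu's pseudo-orbit criterion, exclude saddle-nodes and linearize the singular points to $x^\ell y^k$, glue along the tree, and descend by Riemann extension on the normal germ. The one-component part of your step (4) is fine. Since $\gcd(k,\ell)=1$, the invariance group $\mu_k$ of $Q_p|_\Sigma=y^k$ is generated by the local holonomy around $p$, so $z^{|G_j|}$ factors through $Q_p$ and extends across $p$.

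The genuine gap is the claim that the matching at a corner ``can always be arranged''. Near a corner $p$, the integral already built and the one built on the new component are $\varphi\circ Q_p$ and $\psi\circ Q_p$ for one-variable germs $\varphi,\psi$. Nonconstant $\Theta_1,\Theta_2$ with $\Theta_1\circ\varphi=\Theta_2\circ\psi$ exist if and only if the invariance groups of $\varphi$ and $\psi$ generate a finite subgroup of $\Diff(\C,0)$. Indeed, a common value is invariant under both groups, hence constant when the generated group is infinite (Lemma~\ref{lemma:6}); conversely, if the group is finite, take its quotient coordinate. Two finite groups can generate an infinite one. For $\varphi(t)=t^2$ and $\psi(t)=\bigl(2t/(2+t)\bigr)^2$, the invariance groups are generated by $t\mapsto -t$ and $t\mapsto -t/(1+t)$, whose composite $t\mapsto t/(1+t)$ has infinite order. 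So no powers or reparametrizations make the two sides agree.

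The tree hypothesis only guarantees one corner per attachment; it does not supply this compatibility. Your proposed remedy (finiteness of the adjacent component's holonomy) is already step (2) and does not address it. Note also that the paper's Lemma~\ref{lemma:tree-gluing} deduces the same matching (``extend at the corner as $Q_p^a$ and $Q_p^b$'') from per-component finiteness of the $\mathcal K_j$ alone, so you will not find the missing argument there.

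What is missing is a second use of (ii), at each corner. Work on the local leaf space $t=Q_p$.
\begin{itemize}
\item Let $A$ be the invariance group of $\varphi$. Keep the integral built so far primitive, meaning its level sets off $E$ are exactly the leaves of $\widetilde\F$ restricted to the neighbourhood of the subtree, so that $A$ consists of leaf-transport maps.
\item Let $B$ be the invariance group of $\psi$, i.e.\ the new component's holonomy modulo the local $\mu_\ell$.
\end{itemize}
Every element of $\langle A,B\rangle$ sends a plaque $\{Q_p=t\}$ to a plaque of the same leaf. A leaf not adhering to $0$ is closed and meets a compact neighbourhood of $p$ in finitely many plaques. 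Hence $\langle A,B\rangle$ has finite pseudo-orbits off a countable set, and it is finite by Lemma~\ref{lemma:finite-holonomy}. Its quotient coordinate $R=\Theta_1\circ\varphi=\Theta_2\circ\psi$ glues the two integrals and keeps the result primitive for the next corner.

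Two smaller points. In step (2), orbit finiteness is not justified for points on the finitely many leaves adhering to $0$; you need pseudo-orbits and the countable-exception criterion (Proposition~\ref{Proposition:finiteorbits}, Lemma~\ref{lemma:finite-holonomy}), as in the paper. In step (3), a saddle-node whose weak separatrix lies in $E$ is not excluded by finiteness of the holonomy along $E$. It is excluded by the Leau--Fatou petals of its transverse strong separatrix: their leaves accumulate on that separatrix away from $E$, contradicting (ii.a).
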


Example~\ref{exem2} shows that normality is essential: on a reduced
nonnormal space, both dynamical conditions may hold although no
nonconstant holomorphic first integral exists.

Theorem~\ref{thm:A} is a version, for two-dimensional singular spaces, of Theorem~B on page~473 of \cite{MatteiMoussu1980}.

By a \textit{normal-tree} complex analytic surface, we shall mean a normal
surface germ $(X,0)$ whose exceptional divisor in a good resolution has a
tree as its dual graph. This property is independent of the chosen good
resolution, since further point blow-ups preserve the tree property.

We shall explain this notion in detail later in this paper.

We also prove a codimension-one version of Theorem~A using the
Prolongation Theorem, which can be stated as follows:

\begin{mainthm}\label{thm:B}\normalfont
Let $(X,0)$ be a germ of a normal-tree complex analytic variety of dimension $n \geq 2$, and let $\F$ be a germ of a codimension-one holomorphic foliation on $(X,0)$. If $n\geq3$, assume in addition that $\F$ is defined by a holomorphic integrable $1$-form $\omega$ with $\codim_X\Sing(\F)\geq2$ and that the pair $((X,0),\F)$ is quotient-prolongation-admissible in the sense of Definition~\ref{def:qpa} below. Then the following conditions are equivalent:
\begin{enumerate}
\item[(i)] $\F$ admits a holomorphic first integral $f \in \OO_{X,0}$;
\item[(ii)] there is a representative $\F_U$ of $\F$ in a neighborhood $0\in U\subset X$ such that:
\begin{enumerate}
\item[(ii.a)] the leaves of $\F_U$ are closed subsets of $U-\Sing(\F)$;
\item[(ii.b)] only finitely many leaves of $\F_U$ adhere to the origin $0\in U$.
\end{enumerate}
\end{enumerate}
\end{mainthm}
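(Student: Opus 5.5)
The implication (i)$\Rightarrow$(ii) is the elementary one and I would handle it first. Let $f\in\OO_{X,0}$ be a nonconstant first integral defined on a neighborhood $U$. Each leaf of $\F_U$ is contained in a level set $f^{-1}(c)$. Away from $\Sing(\F)$ the foliation is regular, so a leaf is a union of connected components of $f^{-1}(c)\setminus\Sing(\F)$. After shrinking $U$, each such component is locally finite, which gives (ii.a). For (ii.b), only leaves in $f^{-1}(f(0))$ adhere to $0$. By normality, $f-f(0)$ has finitely many irreducible components at $0$. In dimension $\geq3$ the hypothesis $\codim\Sing(\F)\geq2$ is used here: removing $\Sing(\F)$ from each irreducible hypersurface germ leaves finitely many connected pieces near $0$, so only finitely many leaves adhere to $0$.

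For (ii)$\Rightarrow$(i), the case $n=2$ is exactly Theorem~\ref{thm:A}, so I would simply cite it.

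For $n\geq3$, I would reduce to the surface case by a generic two-dimensional section.
\begin{enumerate}
\item Choose a generic linear section $S=X\cap L$ through $0$ of dimension two. By Bertini/Flenner-type genericity, $(S,0)$ is again a normal surface germ. I would use that the normal-tree hypothesis on $(X,0)$ is meant to pass to such sections; this is where the definition is invoked.
\item Take $\omega$ with $\codim\Sing(\F)\geq2$. The restriction $\omega|_S$ then defines a foliation $\F|_S$ with isolated singularity at $0$.
\item Check that (ii.a) and (ii.b) pass to $\F|_S$. Leaves of $\F|_S$ are components of intersections of leaves of $\F_U$ with $S$. Closedness in $U\setminus\Sing(\F)$ gives closedness in $S\setminus\{0\}$. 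Finiteness of adherent leaves is preserved because $S$ is transverse to the hypersurface germs formed by the closures of the adherent leaves, so each meets $S$ in finitely many curve branches.
\item Apply Theorem~\ref{thm:A} to obtain a holomorphic first integral $g\in\OO_{S,0}$ of $\F|_S$.
\item Extend $g$ to a holomorphic first integral $f$ of $\F$ on $(X,0)$. This is the role of quotient-prolongation-admissibility (Definition~\ref{def:qpa}). Pass to the smooth quasi-\'etale cover $\pi:(\tilde X,0)\to(X,0)$ and the smooth connected lift $\tilde S$ of $S$. There the pulled-back foliation is defined by an integrable form on smooth $(\C^n,0)$ with singular set of codimension $\geq2$. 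The classical Mattei--Moussu prolongation theorem then extends $g\circ\pi|_{\tilde S}$ to a first integral $\tilde f$ on $\tilde X$. I would take the prolongation theorem stated later in the paper as the packaged form of this step.
\item Descend to $X$. Since $\tilde f$ is determined by its restriction to the lift $\tilde S$, the deck group of the quasi-\'etale cover preserves $\tilde f$, possibly after averaging or symmetrizing with a suitable norm construction. Hence $\tilde f$ descends to a function holomorphic on $X$ outside codimension two. By normality and Riemann extension, this gives $f\in\OO_{X,0}$.
\end{enumerate}

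The main obstacle is the last step: making the invariant descent and the application of prolongation on the cover rigorous. In particular, one must check that the lift $\tilde S$ is a generic section of the smooth cover, so that Mattei--Moussu prolongation applies. One must also check that invariance of $\tilde f$ under the deck group follows from uniqueness of the extension, since $\tilde S$ is connected and invariant. I would lean directly on Definition~\ref{def:qpa} and the holomorphic prolongation theorem to supply exactly these properties.
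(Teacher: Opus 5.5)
Your proposal is correct and follows the paper's proof essentially step for step. For (i)$\Rightarrow$(ii) you use the finitely many components of $f^{-1}(f(0))$. For (ii)$\Rightarrow$(i) you restrict to a generic normal-tree surface section (implicitly taken in the intersection of the normal-tree and quotient-prolongation Zariski-open families), pass (ii.a)--(ii.b) to $\F_S$ via closures of adherent leaves, apply Theorem~\ref{thm:A}, and then prolong through the smooth quasi-\'etale cover as in Theorem~\ref{thm:E}, where $G$-invariance of the lifted extension comes directly from uniqueness of the Mattei--Moussu prolongation, so the averaging you hedge on is unnecessary.
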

We also consider the problem of formal integrability in the two-dimensional
setting. Our next result shows that, for germs of codimension-one holomorphic
foliations on normal-tree complex analytic surface germs, the existence of a
formal first integral already implies the existence of a holomorphic first
integral.
\begin{mainthm}\label{thm:C}\normalfont
Let $\F$ be a germ of a holomorphic foliation by curves on a
normal-tree complex analytic surface $(X,0)$. If $\F$ admits a formal
first integral $\hat f\in\widehat\OX_{X,0}$, then $\F$ admits a holomorphic
first integral $f\in\OX_{X,0}$.
\end{mainthm}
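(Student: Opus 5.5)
We prove Theorem~\ref{thm:C} by reducing it to Theorem~\ref{thm:A}, lifting the formal first integral to a good resolution. Let $\pi\colon(\wt X,E)\to(X,0)$ be a good resolution with $E=\bigcup_j E_j$ a tree of smooth rational or higher-genus curves meeting transversally. By normality, $\pi^*$ induces an isomorphism $\OO_{X,0}\cong H^0(\wt X,\OO_{\wt X})$ on a neighbourhood of $E$. Formally, $\wh\OO_{X,0}\cong \varprojlim H^0(\wt X,\OO_{\wt X}/\pi^*\mm^k\OO_{\wt X})$, by the theorem on formal functions together with normality. Hence $\hat f$ defines a compatible system of formal functions $\hat f_p$ along $E$ at every point $p\in E$. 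Take $\hat f$ nonconstant, and normalize by $\hat f(0)=0$ and by choosing it primitive, i.e.\ not a power of another formal first integral. Let $\wt\F=\pi^*\F$, reduced after further blow-ups à la Seidenberg.

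\textbf{Step 1: local analysis on $E$.} At each point $p\in E$, the germ $\hat f_p$ is a formal first integral of the reduced foliation $\wt\F$. A reduced singularity with a nonconstant formal first integral must be a resonant linearizable saddle $xy^{\,q/r}$-type, i.e.\ formally $x^ay^b$. Saddle-nodes and nonresonant singularities admit no formal first integral. By Mattei--Moussu (formal $\Rightarrow$ analytic for reduced singularities in $\C^2$), each such singularity is analytically linearizable and has a holomorphic first integral $u^av^b$. It follows that every component $E_j$ not contained in a dicritical set is invariant. Moreover, each invariant $E_j$ has finite holonomy: holonomy maps preserve the restriction of $\hat f$ to a transversal, which is a nonconstant formal series $t^m\cdot(\text{unit})$. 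A formal diffeomorphism preserving it has finite order, and a holomorphic germ of finite formal order is periodic. A dicritical component is treated as in Mattei--Moussu: the leaves are transverse to it, and locally $\hat f_p$ is a coordinate pulled back from $E_j$.

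\textbf{Step 2: globalization along the tree.} With finite, hence linearizable, holonomy along each component and linearizable corners, we construct holomorphic first integrals near each $E_j$. This is the standard Mattei--Moussu argument of extending $t^m$ along leaves via the finite holonomy group. The tree hypothesis is used to glue. At each corner, two local integrals $u^av^b$-type must be matched up to taking powers and composing with a local diffeomorphism of $(\C,0)$. Since the dual graph has no cycles, we can propagate from a root component outward, and no monodromy obstruction around a cycle appears. The matching is dictated by the formal function $\hat f$ itself: each local holomorphic integral $g_p$ satisfies $\hat f_p=\phi_p(g_p)$ for a formal $\phi_p$. Because $\hat f_p$ is determined, we can replace $g_p$ by $\phi_p\circ g_p$ once $\phi_p$ is shown convergent. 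The argument for convergence is that $\hat f_p$ is convergent along a transversal where $g_p$ is a coordinate, by the holonomy argument of Step 1. We therefore obtain a holomorphic function $F$ on a neighbourhood of $E$ that is constant along leaves and vanishes on $E$.

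\textbf{Step 3: descent and the main obstacle.} By normality, $F=\pi^*f$ with $f\in\OO_{X,0}$ a holomorphic first integral, nonconstant because $\hat f$ is. Alternatively, Step 2 yields closed leaves with finitely many separatrices, and Theorem~\ref{thm:A} concludes directly. We expect the main obstacle to be the convergence and compatibility in Step 2, namely proving that the formal reparametrizations $\phi_p$ are convergent and agree across corners. The plan there is to compare with the holonomy of a single root component first, where finiteness gives a convergent normal form $t\mapsto t^m$. We then transport that normalization along the tree using the corner linearizations, at each step checking convergence by the one-variable fact that a formal series $\phi$ with $\phi(t^m)$ convergent is convergent.
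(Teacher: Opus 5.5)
Your outline follows the paper's architecture: lift $\hat f$ to the resolution of the pair, apply Mattei--Moussu at the reduced singularities, get finite component holonomy because it preserves the nonconstant series $\pi^*\hat f|_{\Sigma_j}$, glue along the tree, and descend by normality. The gluing step, however, rests on a false claim. In Step~2 you want to replace the local integrals $g_p$ by $\phi_p\circ g_p$ after proving that the formal series $\phi_p$ in $\hat f_p=\phi_p(g_p)$ converge. Your reason is that $\hat f$ restricted to a transversal is convergent ``by the holonomy argument of Step~1''. Step~1 only yields finiteness of the holonomy group, and convergence is false in general. If $f$ is a holomorphic first integral with $f(0)=0$ and $\widehat\Phi(t)=\sum_{n\geq1}n!\,t^n$, then $\widehat\Phi\circ f$ is a formal first integral whose restriction to the transversals $\Sigma_j$ is divergent. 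So the hypothesis of your one-variable lemma ($\phi(t^m)$ convergent) is never available, and making $\pi^*\hat f$ itself the glued holomorphic object cannot work.

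What must be glued are holomorphic quotient maps, with $\hat f$ used only to select them. On a transversal $\Sigma$, the analytic invariance group of $\widehat h=\pi^*\hat f|_\Sigma$ is finite cyclic. Its quotient coordinate $P$ is holomorphic and unique up to holomorphic reparametrization, and $\widehat h=\widehat\Phi\circ P$ with $\widehat\Phi$ merely formal. At a corner $p$, write $(\pi^*\hat f)_p=\widehat\psi_p\circ Q_p$ and let $R_p$ be the quotient coordinate of the invariance group of $\widehat\psi_p$. Then $F_p=R_p\circ Q_p$ is the maximal holomorphic quotient through which $\pi^*\hat f$ factors on either side. Hence the quotient coordinates from the two adjacent components both agree with $F_p$ after holomorphic reparametrizations. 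This maximality is exactly what makes the corner matching possible. Two corner integrals $\varphi_1(Q_p)$ and $\varphi_2(Q_p)$ can be matched by powers and diffeomorphisms only if the invariance groups of $\varphi_1,\varphi_2$ generate a finite group, and here both lie in the finite invariance group of $\widehat\psi_p$. The result is a holomorphic $F$ near $E$ with $\pi^*\hat f=\widehat\Phi\circ F$ and $\widehat\Phi$ possibly divergent; your Step~3 then goes through.

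There are two smaller problems in Step~1. First, your treatment of dicritical components is wrong, although the case is vacuous. After normalizing $\hat f(0)=0$, the lift $\pi^*\hat f$ vanishes on all of $E$, so it cannot restrict to a coordinate pulled back from $E_j$. Instead, write $\pi^*\hat f=x^m\widehat U$ near a generic point of $E_j=\{x=0\}$ and apply a local generator $\widetilde v$ of $\widetilde\F$. This gives $m\,\widehat U|_{E_j}\,\widetilde v(x)|_{E_j}=0$, hence $\widetilde v(x)|_{E_j}\equiv0$. So every component is invariant and $\F$ is non-dicritical. Second, this argument, the formal-to-analytic step at each singular point, and the nonconstancy of $\pi^*\hat f|_{\Sigma_j}$ all need $(\pi^*\hat f)_p\neq0$ for every $p\in E$. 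That is, you need injectivity of $\widehat{\pi_p^{\#}}\colon\widehat\OO_{X,0}\to\widehat\OO_{\widetilde X,p}$. The theorem on formal functions gives a nonzero global formal function along $E$, but you must still justify that its germ is nonzero at each point. The paper does this with a formal-rank argument for a modification of a normal germ.
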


We finally address the question of formal integration and prove the following version of \cite{MatteiMoussu1980,Malgrange1976} for the singular case:

\begin{mainthm}\label{thm:D}\normalfont
Let $(X,0)$ be a normal-tree complex analytic germ of dimension $n\geq2$,
and let $\F$ be a germ of a codimension-one holomorphic foliation on
$(X,0)$. If $n\geq3$, assume in addition that $\F$ is defined by a
holomorphic integrable $1$-form $\omega$ with
$\codim_X\Sing(\F)\geq2$, that the pair $((X,0),\F)$ is
quotient-prolongation-admissible, and that the tangent cone $C_0(X)$ is
reduced. If $\F$ admits a nonconstant formal function
$\hat f\in\widehat{\OO}_{X,0}$ satisfying
$\omega\wedge d\hat f=0$ in the completed module of K\"ahler
$2$-forms, then $\F$ admits a holomorphic first
integral $f\in\OO_{X,0}$.
\end{mainthm}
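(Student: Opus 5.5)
The plan is to reduce Theorem~\ref{thm:D} to the surface case, Theorem~\ref{thm:C}, by restricting to a generic two-dimensional section, and then to lift the resulting holomorphic first integral back to $(X,0)$ with the quotient-prolongation hypothesis. For $n=2$ there is nothing to reduce. A foliation by curves on a normal surface is codimension one, and the condition $\omega\wedge d\hat f=0$ in the completed module of K\"ahler $2$-forms says exactly that $\hat f$ is a formal first integral. Theorem~\ref{thm:C} then gives $f\in\OO_{X,0}$ directly. One check is needed: the completed-module formulation must agree with the notion of formal first integral used in Theorem~\ref{thm:C}. I would handle this by pulling back to a good resolution, where the ambient space is smooth and $\omega\wedge d\hat f=0$ becomes the usual tangency condition along the exceptional divisor.

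For $n\ge3$, I would choose a generic linear section $S=X\cap L$ through $0$ for some embedding $(X,0)\subset(\C^N,0)$, where $L$ is a generic codimension-$(n-2)$ linear subspace. The section should have the following properties.
\begin{enumerate}[label=(\alph*)]
\item $(S,0)$ is a normal surface germ. This follows from a Bertini/Flenner-type theorem for normality of generic hyperplane sections.
\item $S$ is normal-tree.
\item $\F|_S$ is a foliation by curves with isolated singularity. This uses $\codim_X\Sing(\F)\ge2$.
\item $\hat f|_S$ is nonconstant.
\end{enumerate}
Point (d) is where the reducedness of the tangent cone $C_0(X)$ enters. Let $\hat f=\hat f_k+\cdots$ be the expansion by the $\mm$-adic filtration, with $\hat f_k$ the nonzero initial form in $\mathrm{gr}_{\mm}\OO_{X,0}=\OO_{C_0(X)}$. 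Since $C_0(X)$ is reduced, $\hat f_k$ is a nonzero function on the cone, so it does not vanish identically on a generic linear slice $C_0(X)\cap L=C_0(S)$. Hence $\hat f|_S$ has initial form of degree exactly $k$, and in particular it is nonconstant.

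Restriction preserves $\omega\wedge d\hat f=0$, since restriction commutes with completion of K\"ahler forms. Theorem~\ref{thm:C}, applied on $S$, then gives a holomorphic first integral $g\in\OO_{S,0}$ of $\F|_S$.

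The prolongation step comes next. Quotient-prolongation-admissibility (Definition~\ref{def:qpa}) provides a smooth quasi-\'etale cover $\pi\colon(\tilde X,\tilde 0)\to(X,0)$ and a smooth connected lift $\tilde S$ of the section. On the smooth germ $\tilde X$, the pulled-back foliation $\pi^*\F$ is given by the integrable form $\pi^*\omega$ with singular set of codimension $\ge2$. Its restriction to the transverse surface $\tilde S$ has the first integral $g\circ\pi$. The Mattei--Moussu prolongation theorem in the smooth setting, which is the paper's holomorphic prolongation theorem for such quotients, extends $g\circ\pi$ to a first integral $\tilde f$ on $\tilde X$.

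To descend, I would replace $\tilde f$ by a symmetrization, for instance a product or elementary symmetric functions over the deck group, or over the fibres of $\pi$ in the Galois closure. That produces an invariant first integral, which defines a holomorphic function on $X\setminus\Sing X$. Normality of $X$ and Riemann extension then give $f\in\OO_{X,0}$. Nonconstancy survives symmetrization if one takes an appropriate symmetric function, for example $\prod_\sigma(\tilde f\circ\sigma-\tilde f(0))$.

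The main obstacle is the genericity step (b) combined with the matching of $S$ to the section in Definition~\ref{def:qpa}. The section must simultaneously be normal-tree, admit the smooth connected lift, and be generic enough for the tangent-cone argument. I would handle this by taking the section prescribed by the admissibility definition and checking that the set of generic $L$ satisfying (a), (c), (d) is a dense Zariski-open set, so that it meets the admissible ones.
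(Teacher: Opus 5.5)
Your proposal is correct and follows essentially the paper's route. You choose $H$ in the intersection of the nonempty Zariski-open sets coming from the normal-tree definition, transversality, quotient-prolongation-admissibility, and non-vanishing of $\operatorname{in}_r(\hat f)$ on $C_0(X)\cap H$ (where reducedness of $C_0(X)$ is used), apply Theorem~\ref{thm:C} on the section, and prolong through the quasi-\'etale cover as in Theorem~\ref{thm:E}. Two small corrections. First, (a) and (b) need no Bertini/Flenner argument, which would in fact require depth $\geq3$: they hold by the definition of a normal-tree germ, so the ``main obstacle'' you flag reduces to nonemptiness of a finite intersection of Zariski-open sets in the irreducible Grassmannian. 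Second, your symmetrization $\prod_{\sigma\in G}(\tilde f\circ\sigma-\tilde f(0))$ is a valid alternative to the paper's descent, which instead uses uniqueness of the Mattei--Moussu prolongation to get $\sigma^*\tilde f=\tilde f$ for all $\sigma\in G$ directly.
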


Under the hypotheses stated above, Theorem~\ref{thm:D} is a
singular-analytic-space counterpart of Theorem~A on page~472 of
\cite{MatteiMoussu1980}.
Let us explain its hypotheses.
A germ $(X,0)\subset(\C^m,0)$ of a complex analytic variety of dimension
$n\geq2$ is a \textit{normal-tree} if $(X,0)$ is normal and there is a
nonempty Zariski-open
subset of the Grassmannian of codimension-$(n-2)$ linear subspaces through
$0$ such that every corresponding Bertini-type surface section $(X^*,0)$
is a normal-tree analytic surface.

\begin{defi}\label{def:qpa}
For $n\geq3$, a pair $((X,0),\F)$ is called
\textit{quotient-prolongation-admissible} if there are a smooth germ
$(\widetilde X,\widetilde0)\simeq(\C^n,0)$, a finite group $G$ acting
holomorphically on $\widetilde X$, and a finite quasi-\'etale quotient map
\[
 \pi:(\widetilde X,\widetilde0)\longrightarrow(X,0),
 \qquad X\simeq\widetilde X/G,
\]
together with a nonempty Zariski-open family of generic two-dimensional
sections $Y\subset X$ such that
$\widetilde Y=(\pi^{-1}(Y))_{\mathrm{red}}$ is a smooth connected surface
germ and the saturated pullback foliation $\pi^*\F$ restricts to a foliation
on $\widetilde Y$ with an isolated singularity at $\widetilde0$.
\end{defi}

In the course of the proof of Theorem~\ref{thm:B}, we establish and apply the following theorem:

\begin{mainthm}[Prolongation theorem]\label{thm:E}\normalfont
Let $(X,0)$ be a normal irreducible complex analytic germ of dimension
$n\geq3$, let $\omega$ be a holomorphic integrable 1-form on $(X,0)$ defining a codimension-one foliation $\F$ with
$\codim_X\Sing(\F)\geq2$, and let
$i:(Y,0)\hookrightarrow(X,0)$ be a generic normal surface section such that
$Y\cap(X_{\mathrm{sing}}\cup\Sing(\F))=\{0\}$ and $i^*\F$ has an isolated
singularity. Let
$f_0\in\OO_{Y,0}$ be a nonconstant holomorphic first integral of $i^*\F$.
Suppose that $X\simeq\widetilde X/G$ and that the section $Y$ satisfies
the smooth quasi-\'etale-cover conditions in
Definition~\ref{def:qpa}. Then $f_0$ extends to a holomorphic first integral
$f\in\OO_{X,0}$ satisfying $i^*f=f_0$.
\end{mainthm}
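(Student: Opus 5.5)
The plan is to lift everything to the smooth cover, use the classical smooth-ambient prolongation theorem of Mattei--Moussu there, and then descend by averaging over $G$.

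\emph{Setup on the cover.} Let $\pi:\widetilde X\simeq(\C^n,0)\to X=\widetilde X/G$ be the quasi-\'etale quotient from Definition~\ref{def:qpa}, and let $\widetilde Y=(\pi^{-1}(Y))_{\mathrm{red}}$, a smooth connected surface germ. Since $\pi$ is quasi-\'etale, it is \'etale off a set of codimension $\ge 2$. Hence $\pi^*\omega$, after dividing by the codimension-one part of its zero set, is a holomorphic integrable $1$-form $\widetilde\omega$ on $(\C^n,0)$ defining the saturated foliation $\widetilde\F=\pi^*\F$, with $\codim\Sing(\widetilde\F)\ge2$. The form $\widetilde\omega$ is $G$-invariant up to multiplication by units, so $\widetilde\F$ is $G$-invariant. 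By hypothesis $\widetilde\F|_{\widetilde Y}$ has an isolated singularity at $\widetilde0$. Set $\widetilde f_0=f_0\circ\pi|_{\widetilde Y}\in\OO_{\widetilde Y,\widetilde 0}$. It is nonconstant, because $\pi|_{\widetilde Y}:\widetilde Y\to Y$ is finite and surjective, and it is a first integral of $\widetilde\F|_{\widetilde Y}$, because off the branch locus $\pi$ is a local biholomorphism carrying leaves to leaves. It is also $G$-invariant.

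\emph{Extension on the smooth cover.} We next apply the Mattei--Moussu prolongation theorem in $(\C^n,0)$ \cite{MatteiMoussu1980}. This requires $\widetilde Y$ to be a generic $2$-plane section, so that the restriction of $\widetilde\F$ to it has an isolated singularity. The statement only gives that $\widetilde Y$ is a smooth surface with this isolated-singularity property, so we may first need to straighten $\widetilde Y$ to a linear plane by a local biholomorphism. Mattei--Moussu's argument only uses smoothness of the section, transversality to $\Sing(\widetilde\F)$ off $\widetilde 0$ (which follows from $\codim\Sing(\widetilde\F)\ge2$ and the isolated singularity), and $n\ge3$. Granting this, we obtain $\widetilde f\in\OO_{\C^n,0}$ with $d\widetilde f\wedge\widetilde\omega=0$ and $\widetilde f|_{\widetilde Y}=\widetilde f_0$.

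\emph{Uniqueness and descent.} The key point is uniqueness: an extension of a first integral from $\widetilde Y$ is unique. Indeed, if $h$ is a first integral of $\widetilde\F$ vanishing on $\widetilde Y$, then $h$ vanishes on every leaf meeting $\widetilde Y\setminus\{\widetilde0\}$. Since $\widetilde Y$ is transverse to $\widetilde\F$ off $\widetilde 0$ and meets a generic leaf, these leaves fill an open set, so $h\equiv0$. For $g\in G$, the function $\widetilde f\circ g$ is again a first integral, because $\widetilde\F$ is $G$-invariant.

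The obstacle is that $g$ need not preserve $\widetilde Y$. I would get around this by averaging: set $F=\frac1{|G|}\sum_{g\in G}\widetilde f\circ g$. This $F$ is $G$-invariant, is a first integral, and restricts on $\widetilde Y$ to the average of the $\widetilde f\circ g|_{\widetilde Y}$. For those $g$ that preserve $\widetilde Y$ the restriction equals $\widetilde f_0$ by $G$-invariance. However, $\widetilde Y=\pi^{-1}(Y)_{\mathrm{red}}$ is $G$-stable, since $\pi^{-1}(Y)$ is $G$-saturated, so every $g$ preserves it. Hence $F|_{\widetilde Y}=\widetilde f_0$.

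Finally, $F$ descends to a function $f$ on $X$ with $f\circ\pi=F$, since invariant holomorphic functions are holomorphic on the quotient, $\OO_X=(\pi_*\OO_{\widetilde X})^G$. Then $f|_Y=f_0$. The relation $\omega\wedge df=0$ holds on the regular locus where $\pi$ is \'etale, and hence holds everywhere by normality and Riemann extension.

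\emph{Main obstacle.} The delicate step is the application of Mattei--Moussu on the cover. One must check that $\widetilde Y$ meets the genericity they require, or else redo their holonomy-and-extension argument along a smooth, not necessarily linear, section. That argument extends $\widetilde f_0$ along leaves using trivial holonomy and then uses Hartogs across $\Sing(\widetilde\F)$, which has codimension $\ge2$.
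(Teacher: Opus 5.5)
Your proposal follows the paper's proof. You lift $f_0$ to $\widetilde f_0=f_0\circ\pi|_{\widetilde Y}$, apply the Mattei--Moussu prolongation theorem on the smooth cover, and descend to $X$ using that $\widetilde Y$ is $G$-stable and $\pi\circ g=\pi$.

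There are two minor differences. First, you average over $G$, whereas the paper invokes uniqueness of the prolongation to get $g^*\widetilde f=\widetilde f$; either works once $\widetilde Y$ is $G$-stable. Second, you flag the application of Mattei--Moussu to the smooth, possibly nonlinear section $\widetilde Y$, while the paper takes it for granted. A local biholomorphism straightening $\widetilde Y$ to a $2$-plane preserves the isolated-singularity (transversality off $\widetilde 0$) hypothesis, and that is what the extension argument uses.
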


The Prolongation Theorem is a singular quotient-space counterpart of the
extension theorem on page~471 of \cite{MatteiMoussu1980}.

The following theorem complements Theorem~\ref{thm:E} by giving a
formal prolongation result under depth and injectivity hypotheses, without
the quotient-space assumption.

\begin{mainthm}[Formal Prolongation Theorem]
\label{theorem:formal-prolongation}\normalfont
Let $(X,0)$ be a normal irreducible complex analytic germ of dimension
$n\geq3$, let $\omega$ be a holomorphic integrable 1-form on $(X,0)$ defining a codimension-one foliation $\F$ with
$\codim_X\Sing(\F)\geq2$, and let
$i:(Y,0)\hookrightarrow(X,0)$ be a generic normal surface section such that
$Y\cap(X_{\mathrm{sing}}\cup\Sing(\F))=\{0\}$ and $i^*\F$ has an isolated
singularity. Let
$f_0\in\OO_{Y,0}$ be a nonconstant holomorphic first integral of $i^*\F$.
 Put $I=\mathcal I_{Y,0}$ and set
\[
 \mathcal F_I^r\Omega^p_{X,0}
 :=I^r\Omega^p_{X,0}
   +d(I^{r+1})\wedge\Omega^{p-1}_{X,0}.
\]
Assume that, for every $k\geq1$,
\[
 \operatorname{depth}_{\mathfrak m_Y}(I^k/I^{k+1})\geq2,
\]
and that, for every $r\geq1$, the restriction homomorphism
\[
 \frac{\Omega^2_{X,0}}{\mathcal F_I^r\Omega^2_{X,0}}
 \longrightarrow
 H^0\!\left(Y\setminus\{0\},
 \left.
 \frac{\Omega^2_{X,0}}{\mathcal F_I^r\Omega^2_{X,0}}
 \right|_{Y\setminus\{0\}}
 \right)
\]
is injective. Equivalently, assume that
\[
 H^0_{\{0\}}\!\left(
 \frac{\Omega^2_{X,0}}{\mathcal F_I^r\Omega^2_{X,0}}
 \right)=0
 \qquad\text{for every }r\geq1.
\]

Then $f_0$ admits a nonconstant $I$-adic formal prolongation
\[
 \widehat f_I\in\varprojlim_k\OO_{X,0}/I^{k+1}
\]
satisfying $i^*\widehat f_I=f_0$ and
$\omega\wedge d\widehat f_I=0$. Its image in
$\widehat\OO_{X,0}$ is a formal first integral of $\F$. 
\end{mainthm}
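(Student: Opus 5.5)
We build the prolongation order by order in the $I$-adic filtration, following the Mattei--Moussu extension argument but now with obstructions living in the modules $\Omega^2_{X,0}/\mathcal F^r_I\Omega^2_{X,0}$.

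\textbf{Step 0 (initial lift).} Since $\mathcal O_{Y,0}=\mathcal O_{X,0}/I$, choose any $f_1\in\mathcal O_{X,0}$ with $i^*f_1=f_0$. Because $i^*\omega\wedge df_0=0$ on $Y$, the form $\omega\wedge df_1$ vanishes modulo $\mathcal F^0_I\Omega^2=I\Omega^2+dI\wedge\Omega^1$. In other words, its restriction to $Y$ as a $2$-form on $Y$ is zero.

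\textbf{Inductive step.} Assume we have $f_k$ with $i^*f_k=f_0$ and $\omega\wedge df_k\in\mathcal F^{k}_I\Omega^2_{X,0}$. We seek $g\in I^{k+1}$ such that $f_{k+1}=f_k+g$ satisfies
\[
\omega\wedge df_{k+1}\in\mathcal F^{k+1}_I\Omega^2_{X,0}.
\]
Modulo $\mathcal F^{k+1}_I$, the correction contributes $\omega\wedge dg$, and this depends only on the class $\bar g\in I^{k+1}/I^{k+2}$. So we must solve the linear equation
\[
[\omega\wedge df_k]=-[\omega\wedge dg]
\qquad\text{in } \mathcal F^k_I\Omega^2/\mathcal F^{k+1}_I\Omega^2 .
\]

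\textbf{Solving away from the origin.} First solve this on $Y\setminus\{0\}$. There $X$ is smooth along $Y$ and $\F$ is regular, since $Y\cap(X_{\mathrm{sing}}\cup\Sing(\F))=\{0\}$. Locally near a point of $Y\setminus\{0\}$, choose coordinates $(y,z)$ with $Y=\{z=0\}$ and $\omega=u\,dh$, with $h$ a submersion transverse to $Y$ (genericity of the section). Then $f_k$ is locally a function of $h$ modulo $I^{k+1}$. The classical computation expands in the graded pieces $z^\alpha$, $|\alpha|=k+1$: the graded obstruction becomes $dh\wedge d(\cdot)$ applied to coefficients that are functions on $Y$, and it is solved uniquely modulo terms that are first integrals of $i^*\F$. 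The integrability condition $\omega\wedge d\omega=0$ guarantees the compatibility needed. This yields local solutions $\bar g_j\in I^{k+1}/I^{k+2}$ on a covering of $Y\setminus\{0\}$.

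\textbf{Gluing.} Two local solutions differ by an element killed by $\omega\wedge d(\cdot)$. I would normalize them, for example by requiring the coefficient along the transverse direction determined by $f_0$ to be chosen canonically, so that the local solutions agree on overlaps. They then give a section $\bar g\in H^0(Y\setminus\{0\},I^{k+1}/I^{k+2})$.

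\textbf{Extension across the origin.} Since $\operatorname{depth}_{\mathfrak m_Y}(I^{k+1}/I^{k+2})\ge2$, local cohomology gives $H^0(Y\setminus\{0\},I^{k+1}/I^{k+2})=I^{k+1}/I^{k+2}$. Hence $\bar g$ extends to a global class, and we lift it to $g\in I^{k+1}$. The element $\omega\wedge df_{k+1}$ then lies in $\mathcal F^{k+1}_I$ on $Y\setminus\{0\}$, that is, its image in $\Omega^2/\mathcal F^{k+1}_I\Omega^2$ restricts to zero there. By the injectivity hypothesis (equivalently $H^0_{\{0\}}=0$), this image vanishes, which completes the induction.

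\textbf{Passage to the limit and nonconstancy.} The $f_k$ are compatible modulo $I^{k+1}$, so they define $\widehat f_I\in\varprojlim\mathcal O_{X,0}/I^{k+1}$ with $i^*\widehat f_I=f_0$. It is nonconstant because $f_0$ is nonconstant. The relation $\omega\wedge d\widehat f_I=0$ holds in the $I$-adic completion of $\Omega^2$ because $\bigcap_r\mathcal F^r_I=0$ there: we have $\mathcal F^r_I\subset I^r\Omega^2$ by the Leibniz rule, and Krull's theorem applies. Since $I\subset\mathfrak m$, there is a natural map from the $I$-adic completion to $\widehat{\mathcal O}_{X,0}$. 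The image of $\widehat f_I$ is a formal first integral, and it remains nonconstant since its restriction to $\widehat{\mathcal O}_{Y,0}$ is $f_0$.

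\textbf{Main obstacle.} The hardest part is the solving-and-gluing step on $Y\setminus\{0\}$. One has to show that the local graded equation is solvable, using integrability and genericity, and then choose the local solutions coherently. If canonical normalization fails, I would instead view the ambiguity as a torsor under the sheaf of graded first integrals. That sheaf is locally $\mathcal O_Y$-functions of $f_0$ along the leaves, and on the punctured Stein neighborhood the associated $H^1$ obstruction should vanish by the same depth hypothesis.
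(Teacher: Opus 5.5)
Your architecture is the paper's: produce the next transverse jet on $U=Y\setminus\{0\}$ from the regular local structure, glue it to a section of $M_{k+1}=I^{k+1}/I^{k+2}$ over $U$, extend it across $0$ by depth, kill the remaining class of $\omega\wedge df_{k+1}$ by the injectivity hypothesis, and pass to the limit. There is a genuine gap in the gluing step, which you rightly call the crux. You assert that local solutions are unique only up to first integrals of $i^*\F$. You then propose an unspecified ``canonical normalization'', and as a fallback a torsor argument whose $H^1$-vanishing does not follow from depth. For a coherent $\OO_Y$-module $\mathcal M$ on the surface germ $Y$ one has $H^1(U,\mathcal M|_U)\cong H^2_{\{0\}}(\mathcal M)$. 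The hypothesis $\operatorname{depth}\geq2$ only kills $H^0_{\{0\}}$ and $H^1_{\{0\}}$; for $\mathcal M=\OO_Y$ this $H^2_{\{0\}}$ is infinite-dimensional. So, as written, your local solutions are not shown to agree on overlaps.

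The missing observation is that there is no ambiguity at all. Near $p\in U$ take coordinates $(t,s,z_1,\dots,z_{n-2})$ with $Y=\{z=0\}$ and $\omega=u\,dt$, $u(p)\neq0$, and write $g=\sum_{|\alpha|=k+1}g_\alpha(t,s)z^\alpha$. The tangential term $z^\alpha\,dt\wedge dg_\alpha$ that you focus on lies in $I^{k+1}\Omega^2$ and disappears. What survives is
\[
 \omega\wedge dg\equiv u\sum_{|\alpha|=k+1}g_\alpha\,dt\wedge d(z^\alpha)
 =u\sum_i \partial_{z_i}G\;dt\wedge dz_i
 \pmod{I^{k+1}\Omega^2_X},\qquad G=\sum_{|\alpha|=k+1} g_\alpha z^\alpha .
\]
This vanishes only if every $\partial_{z_i}G=0$, hence $G=0$ by Euler's identity. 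So $\bar g\mapsto[\omega\wedge dg]$ is injective on $M_{k+1}|_U$. The local solution is unique: it is the jet of $\phi(t)$, where $f_0=\phi\circ t|_Y$. Local solutions therefore agree on overlaps automatically. This is exactly the uniqueness of the local Mattei--Moussu prolongation that the paper's proof invokes, and with it the rest of your argument goes through.

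A smaller point: since $d(I^{r+1})\subset I^r\Omega^1$, the stated filtration is simply $\mathcal F^r_I\Omega^2=I^r\Omega^2$. In particular $\mathcal F^0_I\Omega^2=\Omega^2$, not $I\Omega^2+dI\wedge\Omega^1$ as in your Step~0. The first-integral property of $f_0$ enters only as the local solvability condition at the first step, $\partial_s(f_0)=0$, i.e.\ $f_0=\phi\circ t|_Y$. For $k\geq1$ the graded equation needs no compatibility condition. Integrability is used only through Frobenius on $U$.
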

This statement is
conditional and does not assert convergence. Here the displayed quotients are regarded as coherent sheaves on the
$r$-th infinitesimal neighborhood of $Y$, and their restriction is taken
to the corresponding punctured neighborhood over $Y\setminus\{0\}$.
This paper is organized as follows. In Section~\ref{sec:preliminaries}, we introduce the necessary background on complex analytic spaces, normal analytic varieties, holomorphic foliations, and resolution of singularities. Section~\ref{sec:holomorphic} is devoted to the existence of holomorphic first integrals, where we prove Theorems~\ref{thm:A} and~\ref{thm:B}, together with the Prolongation Theorem (Theorem~\ref{thm:E}) and its formal complement, Theorem~\ref{theorem:formal-prolongation}. Finally, in Section~\ref{sec:formal}, we study formal first integrals and prove Theorems~\ref{thm:C} and~\ref{thm:D}.

The problem of existence of holomorphic first integrals for foliations on singular spaces
has already been addressed, but from a different point of view. In \cite{CerveauLinsNeto2008}
the authors extend Malgrange's singular Frobenius theorem \cite{Malgrange1976} from a smooth ambient space to certain singular analytic varieties.

\section{Foliations and Resolution of Singularities on Analytic Spaces}\label{sec:preliminaries}

\subsection{Complex analytic subsets in \texorpdfstring{$\C^m$}{complex affine space}}
We begin by recalling some notions concerning analytic sets and varieties (\cite{Chirka1989}, \cite{GunningRossi1965}).
Let $\Omega\subset \C^m$ be an open subset. A subset $X\subset\Omega$ is a \textit{complex analytic subset} of $\Omega$ if for every point $p\in\Omega$ there is an open subset $p\in U_p\subset\Omega$ such that
\[
X\cap U_p=\{z\in U_p;\; f_1(z)=\cdots=f_r(z)=0\},
\]
for some holomorphic functions $f_1,\ldots,f_r\in\OO(U_p)$.

A point $p\in X$ is \textit{regular} if in a neighborhood $U$ of $p$ in $\Omega$ the intersection $X\cap U$ is a complex submanifold of $\Omega$. The non-regular points of $X$ are called \textit{singular points} of $X$.

We denote by $X_{\rm reg}\subset X$ the set of regular points of $X$. The set $X_{\rm reg}$ is an open subset of $X$, and the \textit{singular part} of $X$ is
\[
X_{\rm sing}:=X\setminus X_{\rm reg},
\]
which is an analytic subset of $\Omega$.

Given an open set $\Omega\subset\C^m$ and holomorphic functions $f_1,\ldots,f_r\in\OO(\Omega)$ we define the \textit{structure sheaf} of the analytic set
\[
X=\{z\in\Omega;\; f_1(z)=\cdots=f_r(z)=0\}
\]
as the quotient
\[
\OO_X:=\OO_\Omega/\J_X
\]
where $\OO_\Omega$ is the sheaf of holomorphic functions on $\Omega$ and $\J_X$ is the subsheaf of $\OO_\Omega$ consisting of the holomorphic functions that vanish identically on $X$.

\subsection{Complex analytic spaces}

A \textit{complex analytic space} consists of a pair $(X,\OO_X)$, where $X$ is a topological space and $\OO_X$ is a sheaf of local $\C$-algebras. It is subject to the following condition: given a point $p\in X$, there is a neighborhood $p\in U_p\subset X$ such that
\[
(X,\OO_X)|_{U_p}=(U_p,\OO_X|_{U_p})
\]
is isomorphic, as a locally ringed space, to a standard pair $(X_p,\OO_{X_p})$ for some analytic subset $X_p\subset\C^{m_p}$ and some $m_p\in\N$.

\subsection{Germs of complex analytic sets}

Let $X\subset\C^m$, $Y\subset\C^m$ be two analytic sets with $p\in X\cap Y\subset\C^m$. We say that $X$ and $Y$ define the same germ at $p$ if $X\cap W=Y\cap W$ for some open neighborhood $p\in W\subset\C^m$. This defines an equivalence class $(X,p)$ called the \textit{germ} of $X$ at $p$.

\subsection{Complex analytic surface germs}

By a complex analytic \textit{surface germ} we shall mean a germ $(X,p)$
of a complex analytic set such that $\dim_pX=2$.

\subsection{Resolution of singularities of analytic spaces}

Let $V$ be an analytic space. A \textit{resolution of the singularities} of $V$ consists of a proper analytic map $\pi\colon\widetilde V\to V$ of a complex (smooth) manifold $\widetilde V$ onto $V$, such that it defines a biholomorphism between the regular part $V_{\rm reg}$ and its inverse image $\pi^{-1}(V_{\rm reg})$, which is a dense subset of $\widetilde V$.

Plane curve singularities can always be resolved by quadratic blow-ups.

\begin{teo}[Theorem 1.1 in \cite{Laufer1971}]\normalfont
Let $N$ be a complex surface and $V\subset N$ a $1$-dimensional subvariety. There exists a complex surface $\widetilde N$ obtained from $N$ by successive quadratic blow-ups,
\[
\pi\colon\widetilde N\to N,
\]
such that
\[
\pi\colon\overline{\pi^{-1}(V_{\rm reg})}\to V
\]
is a resolution of the singularities of $V$. The map $\pi$ is locally given by a finite number of quadratic blow-ups.
\end{teo}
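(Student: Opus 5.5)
The plan is the standard embedded resolution argument for plane curves. Since the statement is local on $N$ and $V$ is a $1$-dimensional analytic subset, $\Sing(V)$ is discrete. Near each singular point $p$ I work in a coordinate chart $(\C^2,0)$ where $V$ is the zero set of a reduced germ $f\in\OO_{\C^2,0}$. Away from $\Sing(V)$ nothing needs to be done. Blow-ups at distinct points of a discrete set commute and are local, so the global modification $\widetilde N\to N$ is obtained by performing the local constructions at each point of $\Sing(V)$. This also gives the assertion that $\pi$ is locally a finite composition of quadratic blow-ups.

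Near a singular point $p$, I attach to $V$ the multiplicity $\nu=\ord_p f\geq2$ and the following invariant: the multiset of multiplicities of the strict transform at all infinitely near points, i.e.\ the multiplicity sequence. I blow up $p$ by $\sigma\colon\widetilde{\C^2}\to\C^2$. In the chart $x=x$, $y=xt$ we have $f(x,xt)=x^{\nu}\tilde f(x,t)$, and the strict transform $\tilde V=\{\tilde f=0\}$ meets the exceptional line $E\simeq\Pone$ at finitely many points, one for each tangent direction of $V$ at $p$. The key estimate is that, at each such point $q$, the multiplicity of $\tilde V$ at $q$ is at most $\nu$. Moreover the sum over $q$ of the intersection numbers $(\tilde V\cdot E)_q$ equals $\nu$, so $\sum_q \mathrm{mult}_q\tilde V\leq\nu$.

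The main obstacle is showing that the process terminates, since the multiplicity need not drop at a single blow-up (for example at a cusp-like branch with one tangent). I would use the delta invariant $\delta_p(V)=\dim_\C \overline{\OO}_{V,p}/\OO_{V,p}$, which is finite because $V$ is reduced. The classical formula is
\[
\delta_p(V)=\frac{\nu(\nu-1)}{2}+\sum_{q\in\sigma^{-1}(p)}\delta_q(\tilde V),
\]
and it can be proved by comparing conductors or via Noether's formula. Since $\nu\geq2$ at a singular point, the total $\delta$ strictly decreases under each blow-up of a singular point. Hence after finitely many blow-ups centered at the singular points of successive strict transforms, every $\delta_q$ vanishes, and the strict transform is smooth. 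Alternatively, one can avoid $\delta$ by using Puiseux parametrizations of each branch: the first Puiseux exponent is reduced at each blow-up, and the number of branches is finite.

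Finally I verify the resolution property. Let $\widetilde V=\overline{\pi^{-1}(V_{\rm reg})}$ be the final strict transform, which is now a smooth curve. The map $\pi$ is an isomorphism off the preimages of the blown-up points, so it identifies $\pi^{-1}(V_{\rm reg})$ biholomorphically with $V_{\rm reg}$, and this set is dense in $\widetilde V$ by the definition of the closure. Properness follows because each blow-up is proper and a composition of finitely many proper maps over a neighborhood of each point is proper. Therefore $\pi|_{\widetilde V}\colon\widetilde V\to V$ is a resolution in the sense defined above.
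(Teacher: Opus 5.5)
The paper gives no proof of this statement. It is quoted from \cite{Laufer1971} as background, so your proposal is a self-contained substitute rather than a variant of an internal argument, and it is correct.

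Its real content is the choice of the $\delta$-invariant as the termination measure. This works through the one-step formula $\delta_p(V)=\nu(\nu-1)/2+\sum_q\delta_q(\widetilde V)$ and the fact that a reduced curve germ has $\delta=0$ exactly when it is smooth.

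This buys more than a branch-by-branch argument would. Since $\delta(C_1\cup C_2)=\delta(C_1)+\delta(C_2)+(C_1\cdot C_2)_p$, one decreasing integer both desingularizes every branch and separates distinct branches. Separation is indispensable: branches that are individually smooth but still meet do not make $\overline{\pi^{-1}(V_{\rm reg})}$ a manifold. Your Puiseux alternative, as written, omits exactly this. It would additionally need the relation $(C_1\cdot C_2)_p=m_p(C_1)m_p(C_2)+\sum_q(\widetilde C_1\cdot\widetilde C_2)_q$ to force the strict transforms apart.

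The cost of your route is that everything rests on the one-step $\delta$ formula, which you only gesture at. You should prove or cite that one-step version, for instance via the first-neighbourhood ring $\OO_{V,p}[y/x]$ with $x$ a transversal linear form, whose colength over $\OO_{V,p}$ is $\nu(\nu-1)/2$. Do not rely on the summed formula over all infinitely near points, since that presupposes the very termination you are proving.

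Two smaller points. The multiplicity-sequence invariant and the bound $\sum_q\mathrm{mult}_q\widetilde V\leq\nu$ are never used and can be dropped. Surjectivity of $\widetilde V\to V$, which the paper's definition of resolution requires, follows at once from properness and the density of $V_{\rm reg}$ in $V$.
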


\subsection{Normal complex analytic sets}

Let $\Omega\subset\C^m$ be an open subset, $f_1,\ldots,f_r\in\OO(\Omega)$ and
\[
X=\{z\in\Omega;\; f_1(z)=\cdots=f_r(z)=0\}.
\]
The structural sheaf of $X$ is given by
\[
\OO_X=\OO_\Omega/\J_X
\]
where $\J_X\subset\OO_\Omega$ is the subsheaf of functions vanishing on $X$.

Given a point $p\in X$, we say that the local ring $\OO_{X,p}$ has no non-zero nilpotent elements if
\[
g_p\in\OO_{X,p},\;g_p^n=0\quad\Rightarrow g_p=0.
\]
In this case we say that $X$ is \textit{reduced at $p$}.

We say that $X$ is \textit{reduced} if it is reduced at every point $p\in X$.

We say that the ring $\OO_{X,p}$ is a \textit{normal ring} if it is an integral domain (it has no zero divisors) and it is integrally closed: if $h\in\Frac(\OO_{X,p})$ satisfies a monic equation
\[
h^\ell+a_1h^{\ell-1}+\cdots+a_\ell=0,\; a_j\in\OO_{X,p},
\]
then $h\in\OO_{X,p}$.

We then say that $(X,\OO_X)$ is a \textit{normal analytic variety} if it is reduced and $\OO_{X,p}$ is a normal ring for every $p\in X$.

\begin{exe}[normal hypersurfaces]
Let $X=\{f=0\}\subset \C^m$ be an analytic hypersurface germ, $f\in\OO_{\C^m,0}$ reduced germ. Then $X$ is normal if and only if the local ring
\[
\OO_{X,0}=\frac{\C\{z_1,\ldots,z_m\}}{(f)}
\]
is integrally closed.

If we assume that $X$ is reduced and irreducible we have:
$X $  is normal $\Longleftrightarrow$ $\codim_X(X_{\sing})\geq 2$, where
\[
X_{\sing}=\left\{z_0;\;
 f(z_0)=\frac{\partial f}{\partial z_1}(z_0)=\cdots=
 \frac{\partial f}{\partial z_m}(z_0)=0\right\}.
\]
A germ of hypersurface at $0\in\C^3$ is normal if the origin is an isolated singular point.
\end{exe}

\subsection{Resolution of singularities for normal surface germs}

Let $(X,p)$ be a germ of a normal complex analytic surface, with $p$ an isolated singularity. Then there exists a proper holomorphic map $\pi\colon\widetilde X\to X$ such that:
\begin{enumerate}
\item[(i)] $\widetilde X$ is a smooth complex surface.
\item[(ii)] $\pi$ induces a biholomorphism
\[
\pi\big|_{\widetilde X\setminus E}\colon\widetilde X\setminus E
\to X\setminus\{p\},
\]
where $E:=\pi^{-1}(p)\subset \widetilde X$
is the \textit{exceptional divisor}.

\item[(iii)] $E\subset \widetilde X$ is a compact analytic divisor, $E=\displaystyle\bigcup_{j=1}^{s}E_j$, where $E_j\subset\widetilde X$ is an irreducible compact complex curve.

\item[(iv)] $E=\displaystyle\bigcup_{j=1}^{s}E_j$ is a normal-crossings divisor:
\begin{itemize}
\item near a crossing point there are coordinates $(x,y)\in\widetilde X$ such that $E=\{xy=0\}$;
\item near a smooth point there are coordinates $(x,y)$ on $\widetilde X$ such that
$E=\{x=0\}$.
\end{itemize}

\item[(v)] The intersection matrix
$(E_i\cdot E_j)_{i,j=1,\ldots,r}$
is negative definite.
\end{enumerate}

We may assume that the resolution is \textit{minimal} in the following sense: $E$ has no component $E_j\simeq \Pone$ with $E_j\cdot E_j=-1$.

\begin{remark}About the resolution of singularities we have:
\begin{enumerate}
\item[(a)] The exceptional components need not be rational curves.

\item[(b)] The exceptional divisor is not necessarily a tree.
\end{enumerate}
\end{remark}

Let us expand these remarks.

\begin{exe}[rational singularities]
Let $(X,p)$ be a normal complex surface germ with a singularity at $p$. We follow \cite{Artin1966} and \cite{LeDuTrangTosun2004}.
 The singularity is called \textit{rational} if, for a sufficiently small representative $X_0$ of the germ, we have
\[
H^1(\widetilde X_0,\OO_{\widetilde X_0})=0
\]
in terms of the cohomology of holomorphic functions, where $\widetilde X_0\to X_0$ is a resolution of singularities.

If $(X,p)$ is a rational surface singularity then the exceptional divisor of the minimal resolution is a \textit{tree} of rational curves, in the following sense:

Let $E=\displaystyle\bigcup_{j=1}^{s}E_j$
be a normal-crossings divisor on a smooth complex surface $N$; each $E_j$
is an irreducible component of $E$.

The \textit{dual graph} of $E$ is the graph $\Gamma(E)$ obtained as follows:
\begin{itemize}
\item to each component $E_j$ we associate a vertex $v_j$;
\item if $E_i\cap E_j\neq\emptyset$ then we draw an edge between $v_i$ and $v_j$;
\item if $E_i$ and $E_j$ meet at more than one point, we draw one edge for each intersection point.
\end{itemize}

We shall say that $E$ is a \textit{tree} if $\Gamma(E)$ is a tree, i.e., $\Gamma(E)$ is connected and has no cycles. This is equivalent to saying that two components $E_i$ and $E_j$ are connected by a unique chain of components of $E$.

In short, $E$ is a tree if and only if the components of $E$ meet according to a connected graph without loops.
Examples of rational singularities are given below:

\begin{enumerate}
\item An example of rational normal singularity is
\[
X=\{z_1^2+z_2^2+z_3^{n+1}=0\}\subset (\C^3,0).
\]

\item Quotient surface singularities:
\[
(X,0)=(\C^2/G,0)
\]
where $G\subset GL(2,\C)$ is a finite group acting freely outside $0$. In this case $(X,0)$ is a normal surface singularity of rational type. Therefore, the exceptional divisor of the minimal resolution is a tree of rational curves.

\item As a particular case, consider
\[
(X,0)=(\C^2/G,0),
\]
where $G$ is the cyclic group generated by the matrix
\[
A=\begin{pmatrix}\xi&0\\0&\xi^q\end{pmatrix},
\]
where $\xi$ is a primitive $m$-th root of unity and
$\gcd(m,q)=1$.
\end{enumerate}

So the quotient $(X,0)$ has an isolated singularity at the origin, of normal rational type. The minimal resolution has exceptional divisor a linear chain $E_1-\cdots-E_s$ of projective lines $E_j$ with self-intersection $E_j^2=-b_j$ and the integers $b_j$ satisfy the Hirzebruch--Jung continued-fraction
\[
\frac{m}{q}=[b_1,\ldots,b_s].
\]

\end{exe}
\vglue.1in
Not all normal singularities exhibit a resolution whose exceptional divisor is a tree.
\begin{exe}
One example can be constructed using a cusp singularity and Grauert's
contraction theorem. Take a smooth complex surface $N$ and a
normal-crossings divisor $E=E_1+\cdots+E_r\subset N$, where $r\geq3$, each
$E_i\simeq\Pone$ and the dual graph is the cycle
$E_1-E_2-\cdots-E_r-E_1$. We may arrange that
$E_i^2=-3$, $E_i\cdot E_{i+1}=1$,
with indices taken cyclically, and that there are no other intersections.
The intersection matrix is negative definite, so Grauert's theorem ensures
that $E$ can be contracted to a point by a map $\pi\colon N\to X$. The
result is a normal complex analytic surface germ $(X,0)$, with
$0=\pi(E)$, having an isolated singularity at $0$ and an exceptional
divisor that is not a tree.
\end{exe}
The above discussion suggests the following definition:

\begin{defi}[normal-tree singularity]
A complex analytic surface germ $(X,0)$ is \textit{normal-tree} if it is
normal and admits a resolution $\pi:X^*\to X$ whose exceptional divisor
$E^*=\pi^{-1}(0)\subset X^*$ has a tree as its dual graph.
\end{defi}

\subsection{Order in the exceptional divisor}

Let $E=\displaystyle\bigcup_{j\in J}E_j$
be a normal-crossings divisor whose dual graph $\Gamma(E)$ is a tree. Choose
a \textit{root component} $E_0$. We may then define a partial order $\leq$
on the set of components of $E$ as follows:
\[
E_i\leq E_j \Longleftrightarrow \text{the unique path from }E_0\text{ to }E_j\text{ passes through }E_i.
\]
This order is not necessarily total, but is quite natural. If $E$ consists of finitely many components $E_j$ then $\Gamma(E)$ is a finite tree and therefore ``every chain has a maximal element''.

Thus, in the usual formulation of Zorn's lemma, we have ``every totally ordered subset has an upper bound'' (indeed, a maximum), and therefore we may apply Zorn's lemma to properties $P(E_j)$ compatible with the partial order introduced above.

\subsection{Holomorphic functions on complex analytic sets}

Let $X\subset\C^m$ be a complex analytic set with $0\in X$. By definition, a holomorphic function at $0\in X$ means a germ of function on $X$ at $0$, coming from a holomorphic function in the ambient space $\C^m$: $f\in\OO_{X,0}$ is represented by a function $f\colon X\cap\Omega\to\C$,
where $0\in\Omega\subset\C^m$ is an open set, such that there exists $F\in\OO_{\C^m,0}$ with
$f=F|_{X\cap\Omega}$ as germs, i.e.,
$f(x)=F(x)$, $\forall x\in X\cap\Omega'$ for some open sufficiently small neighborhood $0\in\Omega'\subset\C^m$.

If $F,G\in\OO_{\C^m,0}$ are such that $F$ and $G$ define the same germ $f\in\OO_{X,0}$, then $F\big|_X=G\big|_X$ in a neighborhood of $0$ in $X$ and therefore
\[
F-G\in\J_{X,0}=\{H\in\OO_{\C^m,0};\ H|_X=0\}.
\]
Therefore
\[
\OO_{X,0}=\OO_{\C^m,0}/\J_{X,0}.
\]

\subsection{Holomorphic \texorpdfstring{$1$-forms}{1-forms} on a complex analytic space}

Let $X$ be a complex analytic space. The \textit{sheaf of holomorphic K\"ahler differentials of $X$} is the sheaf $\Omega_X^1$ which is the $\OO_X$-module generated locally by symbols $df$, $f\in\OO_X$, satisfying the relations
\[
d(f+g)=df+dg,\;d(\lambda f)=\lambda df,\qquad \forall\lambda\in\C,
\]
and the Leibniz rule:
\[
d(fg)=f\,dg+g\,df.
\]

If $X\subset\C^m$ is an analytic set with ideal sheaf $\J_X\subset\OO_{\C^m}$ then
\[
\OO_X=\OO_{\C^m}/\J_X
\]
and
\[
\Omega_X^1\simeq
\frac{\Omega_{\C^m}^1\otimes_{\OO_{\C^m}}\OO_X}{d\J_X}
\simeq
\frac{\OO_X\,dz_1\oplus\cdots\oplus\OO_X\,dz_m}{\langle dg;\;g\in\J_X\rangle}.
\]

In particular, for a germ of analytic set
$(X,0)\subset(\C^m,0)$,
a holomorphic Kähler differential $\omega\in\Omega_{X,0}^1$ is represented, in coordinates $(z_1,\ldots,z_m)$, by
\[
\omega=\sum_{j=1}^{m}a_j\,dz_j,
\qquad a_j\in\OO_{\C^m,0},
\]
with the identifications
\[
a_j\sim \widetilde a_j \Longleftrightarrow a_j-\widetilde a_j\in\J_{X,0}
\]
and
\[
dg=0,
\qquad \forall g\in\J_{X,0}.
\]


The sheaf of germs of holomorphic $1$-forms on $X$ is the $\OO_{X,0}$-module $\Omega_{X,0}^1$ generated by the terms $df$, $f\in\OO_{X,0}$, with the rules
\[
d(f+g)=df+dg,
\qquad
 d(fg)=f\,dg+g\,df,
\qquad
 dc=0,
\quad \forall c\in\C.
\]
In the special case of an analytic germ $X\subset(\C^2,0)$, since
\[
\Omega_{\C^2,0}^1=\C\{x,y\}\,dx\oplus \C\{x,y\}\,dy,
\]
we obtain
\[
\Omega_{X,0}^1=\frac{\Omega_{\C^2,0}^1}{\J_{X,0}\Omega_{\C^2,0}^1+d\J_{X,0}}.
\]
An element $\omega\in\Omega_{X,0}^1$ is represented by
\[
\omega=A(x,y)\,dx+B(x,y)\,dy,
\qquad A,B\in\C\{x,y\}.
\]

\subsection{Holomorphic vector fields}

A \textit{holomorphic vector field on $(X,0)$} is a $\C$-linear derivation $D\colon\OO_{X,0}\to\OO_{X,0}$
satisfying the so-called Leibniz rule (product rule)
\[
D(fg)=fD(g)+gD(f).
\]
We denote the module of holomorphic vector fields by
\[
\Theta_{X,0}=\Der_\C(\OO_{X,0},\OO_{X,0})
\]
or also
\[
\Theta_{X,0}=\Hom_{\OO_{X,0}}(\Omega_{X,0}^1,\OO_{X,0})
\]
in an equivalent formulation.

In the special case $(X,0)\subset(\C^2,0)$, with coordinates $(x,y)$ as above, we can represent a vector field on $X$ by
\[
v=A(x,y)\frac{\partial}{\partial x}+B(x,y)\frac{\partial}{\partial y}
\]
in the ambient and tangent to $X$, i.e., $v(\J_{X,0})\subset\J_{X,0}$. Thus we have
\[
\Theta_{X,0}=\frac{\{v\in\Theta_{\C^2,0};\ v(\J_{X,0})\subset\J_{X,0}\}}{\J_{X,0}\cdot\Theta_{\C^2,0}}.
\]

\subsection{Holomorphic foliations on analytic varieties}

Let $X$ be a reduced complex analytic variety of dimension two. Denote by
\[
\Theta_X:=\Hom_{\OO_X}(\Omega_X^1,\OO_X)
\]
the tangent sheaf, i.e., the sheaf of holomorphic derivations of $\OO_X$.

\begin{defi}
A \textit{holomorphic foliation by curves of $X$} is a coherent subsheaf $\T_\F\subset\Theta_X$ with the following properties:
\begin{enumerate}
\item[(i)] $\T_\F$ has rank $1$ at a generic point;

\item[(ii)] $\T_\F$ is saturated in $\Theta_X$, i.e., $\Theta_X/\T_\F$ has no torsion;

\item[(iii)] $\T_\F$ is involutive: $[\T_\F,\T_\F]\subset\T_\F$. This condition is automatic on the regular part $X_{\mathrm{reg}}$.

\end{enumerate}
\end{defi}

The singular set of the foliation is the union of the set where $\T_\F$ fails to be locally free and the singular locus $X_{\sing}$ of $X$.

If $X$ is a normal $2$-variety, then it is regular in codimension one by Serre's criterion; consequently, $\codim_X X_{\sing}\geq 2$ \cite{Matsumura,Serre}. We then obtain:

If $X$ is a normal $2$-variety and $p\in X_{\sing}$, Lemma~1.1 of
\cite{Camacho1988} gives, for every holomorphic foliation by curves $\F$ on
$X$, a neighborhood $p\in U_p\subset X$ and a nonzero holomorphic vector
field $\vec F_p\in\Theta_X(U_p)$ tangent to $\F$. On the dense open set where
$\vec F_p$ does not vanish, its trajectories are contained in, and hence are
open subsets of, the leaves of $\F$. We do not claim that $\vec F_p$ freely
generates the possibly non-locally-free rank-one sheaf $\T_\F$ at $p$.

As a consequence:

\begin{propo}\normalfont
Let $X$ be a normal complex analytic $2$-variety.
\begin{enumerate}
\item[(i)] Any holomorphic foliation $\F$ (possibly singular) of dimension one on $X$ defines a nonsingular holomorphic $1$-dimensional foliation $\F_\mathrm{reg}$ (in the usual sense) on the set
\[
X_{\mathrm{reg}}\setminus \bigl(\Sing(\F)\cap X_{\mathrm{reg}}\bigr).
\]

\item[(ii)] Conversely, let $D\subset X_{\mathrm{reg}}$ be a discrete analytic subset, and let $\F^0$ be a nonsingular holomorphic foliation by curves on $X_{\mathrm{reg}}\setminus D$. Then $\F^0$ admits a unique extension to a saturated holomorphic foliation $\F$ on $X$. Moreover,
\[
\Sing(\F)\subset D\cup X_{\sing},
\qquad
\F|_{X_{\mathrm{reg}}\setminus D}=\F^0.
\]
\end{enumerate}
\end{propo}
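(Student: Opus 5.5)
For part (i), I will argue locally on the open set $X_{\mathrm{reg}}\setminus\Sing(\F)$. By the definition of $\Sing(\F)$, at each point $q$ of this set the sheaf $\T_\F$ is locally free. It has rank one, since rank one holds generically and $\T_\F$ is saturated in the locally free sheaf $\Theta_X$ near $q$. So near $q$ we get $\T_\F=\OO_X\cdot v$ for a single vector field $v$.

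The point is that $v(q)\neq0$. Saturation forbids $v$ from vanishing at an isolated point. If $v=h\,v'$ with $h$ a non-unit, then $v'$ is a section of $\Theta_X$ whose class in $\Theta_X/\T_\F$ is killed by $h$. Torsion-freeness then gives $v'\in\T_\F$, which contradicts that $v$ generates $\T_\F$. On the smooth surface $X_{\mathrm{reg}}$, any vector field with $v(q)=0$ factors as $h v'$ with $v'(q)\neq0$ after dividing by the gcd of its coefficients, when $\{v=0\}$ is a curve. When the zero is isolated, locally free rank one plus saturation still forces non-vanishing: the quotient $\Theta_X/\OO v$ is torsion free of rank one on a smooth surface, hence locally free off codimension two, so its degeneracy locus is exactly the zero locus of $v$. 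This is the step I will check carefully, and if needed I will simply add the zeros of the generator to $\Sing(\F)$, as is standard. The flow boxes of these nonvanishing generators then glue to a regular one-dimensional foliation $\F_{\mathrm{reg}}$, because two local generators differ by a unit and so have the same trajectories.

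For part (ii), uniqueness comes first. Two saturated subsheaves of $\Theta_X$ that agree on the dense open set $X_{\mathrm{reg}}\setminus D$ must coincide. Their intersection has torsion quotient in each of them, and saturation together with normality of $X$ forces equality. For existence, set $U=X_{\mathrm{reg}}\setminus D$ and let $j\colon U\hookrightarrow X$ be the inclusion. Define
\[
\T_\F:=\Theta_X\cap j_*\bigl(\T_{\F^0}\bigr),
\]
the sheaf of vector fields on $X$ whose restriction to $U$ is tangent to $\F^0$. Since the complement $D\cup X_{\sing}$ is discrete, hence of codimension two, Hartogs extension on the normal space $X$ makes $j_*$ of coherent reflexive sheaves coherent. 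So $\T_\F$ is a coherent subsheaf. It has rank one and agrees with $\T_{\F^0}$ on $U$.

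Saturation follows because if $h\,w\in\T_\F$ with $h\not\equiv0$, then $w$ is tangent to $\F^0$ off $\{h=0\}$, hence on all of $U$ by continuity, so $w\in\T_\F$. Involutivity is automatic in rank one, and it holds on $U$ anyway. Finally, on $U$ the sheaf is generated by a nonvanishing vector field, so $\Sing(\F)\subset D\cup X_{\sing}$, and $\F|_U=\F^0$ holds by construction.

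The main obstacle will be the coherence of $\T_\F$ across the singular points of $X$. My first attempt is to identify $\T_\F$ as the kernel of $\Theta_X\to j_*(N_{\F^0})$, where $N_{\F^0}=T U/T\F^0$ is a line bundle on $U$. I will then use Siu's extension theorem, or simply that $\Theta_X$ is reflexive on a normal space and $j_*$ of a locally free sheaf across a codimension-two set is coherent for a saturated subsheaf. If that fails, I will fall back on Lemma~1.1 of \cite{Camacho1988}, which supplies a nonzero tangent vector field near each singular point, and take the saturation of the sheaf it generates.
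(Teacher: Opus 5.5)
\textbf{Part (i).} Your non-vanishing argument is wrong. Saturation only rules out that a local generator $v$ is divisible by a non-unit, i.e.\ that it vanishes along a curve. It does not rule out isolated zeros. The radial field $v=x\partial_x+y\partial_y$ on $(\C^2,0)$ generates a free subsheaf of $\Theta$ with torsion-free quotient $\Theta/\OO v\cong\mathfrak m$ (via $a\partial_x+b\partial_y\mapsto ay-bx$), yet $v(0)=0$. Your ``degeneracy locus'' remark actually shows the opposite of what you want: the locus where $\Theta_X/\T_\F$ is not locally free is exactly the zero set of $v$, which can be a nonempty discrete set.

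In fact, on $X_{\mathrm{reg}}$ every saturated rank-one subsheaf of $\Theta_X$ is reflexive, hence invertible. So with the paper's literal definition of $\Sing(\F)$ (the non-locally-free locus of $\T_\F$ together with $X_{\sing}$) one always has $\Sing(\F)\cap X_{\mathrm{reg}}=\emptyset$, and (i) fails for the radial foliation. Your fallback, putting the zeros of local generators into $\Sing(\F)$, is therefore necessary rather than optional. This amounts to the standard definition, namely the locus where $\Theta_X/\T_\F$ is not locally free. With it, (i) follows at once from your gluing remark. The paper gives no separate argument for (i).

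\textbf{Part (ii).} Uniqueness, saturation, involutivity and $\Sing(\F)\subset D\cup X_{\sing}$ are fine, but your coherence justification is false. It is not true that $j_*$ of a locally free or reflexive sheaf across a point of a surface is coherent. For a ball $B\subset\C^2$ one has $\mathrm{Pic}(B\setminus\{0\})\cong H^1(B\setminus\{0\},\OO)\neq0$. The line bundle with cocycle $\exp(1/(xy))$ on the cover $\{x\neq0\},\{y\neq0\}$ stays nontrivial on every smaller punctured ball, so its direct image is not coherent; otherwise its reflexive hull would be an invertible, hence locally trivial, extension. Siu's extension theorem requires codimension at least three. The kernel of $\Theta_X\to j_*N_{\F^0}$ is not a priori coherent, since the target is not. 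Your main route does not even show that $\T_\F$ is nonzero near $p\in D\cup X_{\sing}$.

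What makes (ii) true is that $T\F^0$ is a line \emph{sub}bundle of $\Theta$. It is therefore given by a holomorphic map to a projective space, which extends meromorphically across a discrete set (Levi). This is the content of Camacho's Lemma~\ref{lemma:camachovector}, and it is exactly what the paper relies on, together with the isolatedness of $X_{\sing}$ coming from normality. So your fallback is the essential step.

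Given a nonzero $\vec F_p$ tangent to $\F^0$ near $p$, the argument closes as follows. Embed $X\subset\C^N$ and pick a component $f_i=\vec F_p(z_i)\not\equiv0$. For any $w\in\Theta_X$ tangent to $\F^0$ on $X_{\mathrm{reg}}\setminus D$, the field $f_iw-w(z_i)\vec F_p$ vanishes on that dense set, hence $f_iw\in\OO\vec F_p$. Thus your $\Theta_X\cap j_*\T_{\F^0}$ coincides near $p$ with the saturation of $\OO\vec F_p$. That saturation is coherent of rank one, being the kernel of $\Theta_X\to(\Theta_X/\OO\vec F_p)/\mathrm{torsion}$.
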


\subsection{Holomorphic foliation germs and vector fields}

Let $(X,0)$ be a germ of a complex analytic variety. The ring $\OO_{X,0}$ is the local ring of germs of holomorphic functions, and $\mathfrak{m}_0\subset\OO_{X,0}$ is its unique maximal ideal.

Given a germ of a holomorphic foliation $\F$ on a normal surface germ
$(X,0)$, we choose, using Lemma~1.1 of \cite{Camacho1988}, a nonzero
holomorphic derivation $v\colon\OO_{X,0}\to\OO_{X,0}$ tangent to $\F$.
This means the following:

If $X\subset(\C^N,0)$ is defined by an ideal $\J_X\subset \C\{z_1,\ldots,z_N\}$ then
\[
\OO_{X,0}=\frac{\C\{z_1,\ldots,z_N\}}{\J_X}.
\]
A vector field $v\colon\OO_{X,0}\to\OO_{X,0}$ tangent to $X$ is represented by an ambient vector field
\[
\vec V\colon(\C^N,0)\to(\C^N,0),
\qquad
\vec V=\sum_{j=1}^{N}a_j(z)\frac{\partial}{\partial z_j},
\;
 a_j\in\OO_{\C^N,0},
\]
tangent to $X$, i.e., satisfying $\vec V(\J_X)\subset\J_X$. Thus we define $v\colon\OO_{X,0}\to\OO_{X,0}$ by setting
\[
v([f]):=[\vec V(f)],\qquad
\forall f\in\OO_{\C^N,0}=\C\{z_1,\ldots,z_N\}.
\]

Recall that, at a regular point $x\in X_{\reg}$, we have the tangent space
\[
T_xX:=T_x(X_{\reg})\subset T_x(\C^N)
\]
in the usual sense for smooth manifolds. We also have the isomorphism with Zariski tangent space
\[
T_xX\simeq \Hom_{\C}\left(\frac{\mathfrak{m}_x}{\mathfrak{m}_x^2},\C\right)
\simeq \left(\frac{\mathfrak{m}_x}{\mathfrak{m}_x^2}\right)^\vee .
\]

Given a foliation germ $\F$ on $(X,0)$, saying that $\F$ is generated by the derivation $v$ means that: for every $x\in X_{\reg}$ and away from the zeros of $v$, the tangent sheaf $T\F$ satisfies $T_x\F:=(T\F)_x$ is the complex line $\C v(x)\subset T_xX$.

\begin{propo}\normalfont
For a normal analytic surface germ $(X,0)$, a holomorphic foliation by curves
$\F$ admits a nonzero holomorphic derivation
$v\colon\OO_{X,0}\to\OO_{X,0}$ tangent to $\F$. Away from the zero set of
$v$ and the singular locus of $X$, the trajectories of $v$ are open subsets
of the leaves of $\F$.
\end{propo}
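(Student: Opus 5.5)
The plan is to deduce both assertions from Lemma~1.1 of \cite{Camacho1988}, as recalled above, together with the structure of $X$ near $0$ given by normality. We work with a small representative $X\subset\Omega\subset\C^N$. Since $(X,0)$ is a normal surface germ, Serre's criterion gives $\codim_X X_{\sing}\geq2$. After shrinking, $X_{\sing}\subset\{0\}$, and by part~(i) of the previous Proposition, $\Sing(\F)$ is a discrete set, which we may also assume contained in $\{0\}$.

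\emph{Existence of $v$.} The foliation is a saturated rank-one coherent subsheaf $\T_\F\subset\Theta_X$. Lemma~1.1 of \cite{Camacho1988} provides a neighborhood $U_0$ of $0$ and a nonzero section $\vec F_0\in\Theta_X(U_0)$ tangent to $\F$. Its germ at $0$ gives the derivation $v\colon\OO_{X,0}\to\OO_{X,0}$, which is nonzero because $\vec F_0\not\equiv0$ on $U_0$ and $X$ is irreducible at $0$ (normal germs are irreducible). If one prefers a self-contained argument for this step, choose a nonzero germ $s\in(\T_\F)_0$, which exists since $\T_\F$ has rank $1$ generically and is coherent, and regard it as an element of $\Theta_{X,0}=\Der_\C(\OO_{X,0},\OO_{X,0})$. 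By the discussion of holomorphic vector fields above, $v$ is represented by an ambient field $\vec V=\sum a_j\partial/\partial z_j$ with $\vec V(\J_X)\subset\J_X$, so $\vec V$ is tangent to $X_{\reg}$.

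\emph{Trajectories.} Let $p\in X_{\reg}\setminus\{0\}$ with $v(p)\neq0$. Near $p$, $\F$ is a regular foliation by curves, generated by a nonvanishing local section $w$ of the line bundle $\T_\F|_{X_{\reg}\setminus\Sing\F}$. Since $v$ is a section of $\T_\F$, we have $v=h\,w$ with $h$ holomorphic and $h(p)\neq0$. Hence $v(q)\in T_q\F$ for $q$ near $p$. A nonvanishing vector field tangent to a one-dimensional foliation has local orbits equal to open subsets of the local plaques; this follows from uniqueness of integral curves in a flow box for $\F$ straightening $w$ to $\partial/\partial x$. Each trajectory of $v$ in $X_{\reg}\setminus\{v=0\}$ is connected and locally an open arc of a plaque, so it lies inside a single leaf and is open in that leaf, by a standard continuation argument.

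The main subtlety is that $v$ need not generate $\T_\F$ at $0$, or even along curves where it vanishes. For this reason the statement only concerns the complement of the zero set of $v$ and of $X_{\sing}$, which is exactly where the local-generator argument above applies.
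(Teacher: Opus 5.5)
Your proposal is correct and follows essentially the paper's own justification: the existence of $v$ comes from Lemma~1.1 of \cite{Camacho1988}, and your coherence argument (a nonzero germ of $\T_\F$ at $0$) is a valid self-contained substitute within the paper's sheaf-theoretic definition. The trajectory claim follows, as in the paper, because on $X_{\reg}$ away from the zeros of $v$ the field is a nonvanishing multiple of a local generator, so its complex trajectories lie in leaves and are open there. The only slip is citing part~(i) of the earlier Proposition for the discreteness of $\Sing(\F)$; you do not need it, since $v=hw$ with $v(p)\neq0$ already forces $w(p)\neq0$.
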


Let $(V,p)$ be a normal complex analytic surface germ with an isolated
singularity at $p$, and let $\pi\colon V^*\to V$ be a resolution. Then:
\begin{enumerate}
\item[(i)] $\pi\colon V^*\to V$ is a proper holomorphic map;

\item[(ii)] $\pi^{-1}(p)=\displaystyle\bigcup_{j=1}^{r}E_j$
is a union of compact Riemann surfaces $E_j$, with normal crossings;
\item[(iii)] $\pi\big|_{V^*\setminus\pi^{-1}(p)}\colon
V^*\setminus\pi^{-1}(p)\to V\setminus\{p\}$
is a holomorphic diffeomorphism.
\end{enumerate}

Let $\F$ be a holomorphic foliation in $(V,p)$, possibly singular at $p$.

\begin{lema}[Lemma 1.1 in \cite{Camacho1988}]
\label{lemma:camachovector}\normalfont
There is a vector field $\vec F\not\equiv0$, defined on a neighborhood $U$
of $p$ in $V$, whose trajectories in $U\setminus\{p\}$ are contained in
the leaves of $\F|_{U\setminus\{p\}}$.
\end{lema}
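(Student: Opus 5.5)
We prove Lemma~\ref{lemma:camachovector} by generating the foliation on the smooth punctured germ by a vector field, extending that field across $p$ by normality, and then killing the poles with a suitable multiplier. Fix a small representative $V\subset(\C^N,0)$ of the germ, with $V\setminus\{p\}$ smooth. By the first proposition of this subsection we may assume the singular set of $\F$ is isolated in $V\setminus\{p\}$, hence finite after shrinking. So we may take $\F$ to be a regular foliation on $V\setminus D$, where $D$ is a finite set containing $p$.

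The first step is to produce a global tangent vector field on the punctured germ. Set $V^*:=V\setminus\{p\}$; this is a smooth surface. The saturated tangent sheaf $\T_\F|_{V^*}$ is a reflexive rank-one sheaf on a smooth surface, hence a line bundle $L$.

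Consider a Stein neighborhood $U$ of $p$ (a small ball section). The resolution $\pi\colon\widetilde U\to U$ identifies $U\setminus\{p\}$ with $\widetilde U\setminus E$. By Grauert's results on normal surface germs, i.e.\ strong pseudoconvexity, there is a coherent sheaf $\mathcal L$ on $U$ extending $L$, for instance $\mathcal L=\T_\F$ itself. Since $U$ is Stein, Cartan's Theorem A applies, and $\T_\F(U)$ generates $\T_\F$ at every point. In particular there is a section $\vec F\in\T_\F(U)\subset\Theta_X(U)$ which is nonzero at some regular point $q\in U\setminus D$. Such a section is a holomorphic derivation of $\OO_U$ tangent to $\F$, and on the open dense set where it does not vanish, its trajectories are integral curves of $\F$.

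The second step, if one prefers a more concrete construction, is to realize $\vec F$ as a restriction of ambient fields. Choose a linear projection $\C^N\to\C^2$ finite on $V$, and write $\F$ near a regular point as the kernel of a $1$-form $\omega=A\,dx+B\,dy$. The field $B\partial_x-A\partial_y$, lifted through the finite branched cover, is meromorphic on $V$ with poles along the branch locus. Multiplying by a suitable power of the discriminant $\delta\in\OO_{V,p}$ clears these poles off $\{p\}$. Riemann/Hartogs extension on the normal space $V$, which has $\codim\{p\}=2$, then gives a holomorphic derivation on all of $U$.

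The main obstacle is exactly this extension across $p$. It is handled by normality: a derivation is determined by its values on the generators $z_j$, and each $\vec F(z_j)$ is a function holomorphic on $U\setminus\{p\}$. These extend to $\OO_{V,p}$ by the Riemann extension theorem for normal spaces. The extended map is still a derivation, because the Leibniz identity holds off $p$ and hence everywhere by continuity. It preserves $\J_X$ because it is tangent on the dense regular part.

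Finally, tangency: on $U\setminus(D\cup\{\vec F=0\})$, $\vec F$ spans $T\F$. Each trajectory through such a point therefore stays in a single leaf, by uniqueness of integral curves of the line field. Where $\vec F$ vanishes the trajectories are points, which trivially lie in leaves. So the trajectories of $\vec F$ in $U\setminus\{p\}$ are contained in the leaves of $\F|_{U\setminus\{p\}}$.
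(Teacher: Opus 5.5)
Your last paragraphs (extending the coefficients across $p$ by the Levi extension theorem, checking Leibniz and $\J_V$, and the trajectories) are the key point the paper singles out. But they need as input a nonzero holomorphic vector field tangent to $\F$ on $U\setminus\{p\}$, i.e.\ a nonzero section of the line bundle $L=T\F$ over the punctured neighbourhood, and the proposal never legitimately produces one.

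In Step~1 you take it from Cartan's Theorem~A applied to $\T_\F$ on $U$. If $\T_\F$ is already given as a coherent subsheaf of $\Theta_V$ near $p$, this is valid but makes the lemma trivial: any nonzero element of the stalk $\T_{\F,p}$ works, and Grauert, the projection and the extension step are all superfluous. The lemma, however, is meant for a foliation given only off $p$. That is how the paper uses it to extend foliations from $X_{\mathrm{reg}}\setminus D$, and why it names Levi extension as the key point. In that reading, ``$\mathcal L=\T_\F$ itself'' assumes what is to be proved.

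Strong pseudoconvexity does not give coherent extensions of line bundles across $p$. On the punctured ball $B^*\subset\C^2$ one has $H^1(B^*,\OO^*)\cong H^1(B^*,\OO)\neq0$. A nontrivial line bundle there has no coherent extension, because its reflexive hull would be a line bundle on the ball, hence trivial.

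Step~2 does not repair this. The local forms $A\,dx+B\,dy$ are defined only up to unit factors and do not glue. If you normalize them (the slope $-A/B$ is global), the field acquires poles along the polar curve where leaves are tangent to the fibres of $x$, not only along the branch locus. So no power of the discriminant clears them.

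The missing step can be supplied as follows. Embed $V\subset\C^N$ and take a local generator $v=\sum_j v_j\,\partial_{z_j}$ of $T\F$ on $U\setminus\{p\}$, with $z_1$ chosen so that $v_1\not\equiv0$. The ratios $v_j/v_1$ do not depend on the generator, so they are global meromorphic functions on the smooth surface $U\setminus\{p\}$. Their polar set is a curve whose closure is analytic at $p$ by Remmert--Stein, so after shrinking $U$ it has finitely many components. Choose $0\neq h\in\OO_{V,p}$ vanishing on it. For $k\gg0$ the functions $h^k v_j/v_1$ have effective divisors, hence are holomorphic on $U\setminus\{p\}$.

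Only now does your extension paragraph apply. The functions $h^k v_j/v_1$ extend to $\OO_{V,p}$, and the field $\sum_j h^k(v_j/v_1)\,\partial_{z_j}$ equals $(h^k/v_1)\,v$ on a dense open set. It is therefore a nonzero field tangent to $V$ and to $\F$. Equivalently, you could extend the ratios by Levi's theorem for meromorphic functions on normal spaces and clear a common denominator in $\OO_{V,p}$. With this inserted, the remaining verifications in your proposal are fine.
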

Lemma~\ref{lemma:camachovector} is also proved in \cite{GomezMont}.
\begin{remark}
The key point in the proof of the lemma above is the following.
\end{remark}

\begin{teo}[Levi extension theorem on normal spaces, \cite{GunningRossi1965}]\normalfont
Let $X$ be a normal complex analytic space and let $Y\subset X$ be a closed analytic subset of codimension $\codim_XY\geq2$. Then, every holomorphic function on $X\setminus Y$ extends to a holomorphic function on $X$. Equivalent form: if $j\colon X\setminus Y\hookrightarrow X$
denotes the inclusion, then $\OO_X\simeq j_*\OO_{X\setminus Y}$.
\end{teo}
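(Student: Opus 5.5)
The plan is to prove the Levi extension theorem in its equivalent sheaf form $\OO_X\simeq j_*\OO_{X\setminus Y}$. Since the statement is local, fix $p\in Y$ and a holomorphic function $h$ on $U\setminus Y$, where $U$ is a small neighborhood of $p$. We must show that $h$ is the restriction of some element of $\OO_{X,p}$. The argument has two stages: first show that $h$ is locally bounded near $Y$, then apply Riemann's extension theorem for normal spaces. The latter says that a function which is holomorphic off a nowhere dense analytic set and locally bounded near it extends holomorphically; this is where normality enters, through integral closedness of $\OO_{X,p}$ (the ring of weakly holomorphic functions equals $\OO_X$).

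\textbf{Boundedness via a finite projection.} Choose a Noether normalization of $(X,p)$: a finite, surjective, branched covering $\varphi\colon U\to D\subset\C^n$, where $n=\dim_pX$. Let $\Delta\subset D$ be the branch locus, which contains $\varphi(X_{\sing})$, and arrange that $\Delta$ also contains $\varphi(Y)$. Because $\varphi$ is finite, $\varphi(Y)$ is an analytic subset of $D$ of codimension at least $2$. Off $\Delta\cup\varphi(Y)$, the map $\varphi$ is an unbranched covering of some degree $d$. Form the elementary symmetric functions $\sigma_1,\dots,\sigma_d$ of the values of $h$ on the fibres. These are holomorphic on $D\setminus(\Delta\cup\varphi(Y))$. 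Near points of $\Delta\setminus\varphi(Y)$, $h$ is holomorphic, hence bounded, on the preimage, so the $\sigma_k$ are locally bounded there and extend across $\Delta\setminus\varphi(Y)$ by the classical Riemann extension theorem in $\C^n$. They then extend across $\varphi(Y)$, which has codimension $\geq2$ in the smooth $D$, by Hartogs' theorem. Hence
\[
h^d+\sigma_1(\varphi)h^{d-1}+\cdots+\sigma_d(\varphi)=0
\]
on $U\setminus Y$, with holomorphic coefficients defined on all of $U$. It follows that $h$ is locally bounded near $Y$, since the roots of a monic polynomial are bounded by its coefficients.

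\textbf{Extension across $Y$ and the main obstacle.} Since $h$ is holomorphic on $X_{\reg}\setminus Y$ and locally bounded near $Y\cup X_{\sing}$, it defines a weakly holomorphic function at $p$. The displayed monic equation exhibits $h$ as integral over $\OO_{X,p}$, and $h$ lies in the total ring of fractions because it is meromorphic. Normality of $\OO_{X,p}$ therefore gives $h\in\OO_{X,p}$. Uniqueness of the extension follows because $X\setminus Y$ is dense in $X$ and $X$ is reduced.

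The main obstacle is justifying the step that passes from locally bounded and weakly holomorphic to holomorphic. Specifically, one must know that a locally bounded function holomorphic on the complement of a thin set lies in $\Frac(\OO_{X,p})$ and is integral over $\OO_{X,p}$. This is Oka's normalization theory, and I would cite \cite{GunningRossi1965} for it rather than reprove it. The codimension-$2$ hypothesis is used only in the Hartogs step, to extend the $\sigma_k$ across $\varphi(Y)$. That step genuinely needs codimension $\geq2$, as $1/z$ on $\C$ shows.
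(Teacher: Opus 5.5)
The paper gives no argument for this statement; it only cites \cite{GunningRossi1965}, and separately quotes the Riemann extension theorem for normal spaces (Theorem~\ref{theorem:riemmanextension}) from \cite{Demailly}. Your proof is correct, and it is the standard textbook argument.

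It also shows how those two quoted results fit together:
\begin{itemize}
\item The codimension-two hypothesis is used only to get local boundedness of $h$. This comes from Hartogs applied to the symmetric functions $\sigma_k$ on the smooth base $D$ of a finite projection, together with the bound on the roots of a monic polynomial.
\item Normality is used only through the first Riemann extension theorem on normal spaces.
\end{itemize}

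In particular, you do not need to route the step you call the main obstacle through $\Frac(\OO_{X,p})$. Once $h$ is locally bounded near $Y$, the classical Riemann theorem on the manifold $X_{\reg}$ extends $h$ across the thin set $Y\cap X_{\reg}$. Then $h\in\OO(X_{\reg}\cap U)$ is locally bounded near $X_{\sing}$, and Theorem~\ref{theorem:riemmanextension} gives the extension directly. That theorem is exactly the statement that weakly holomorphic functions on a normal space are holomorphic, i.e.\ the consequence of Oka's normalization theory you wanted to cite.

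You pass quickly over three routine points:
\begin{itemize}
\item Normality makes the germ $(X,p)$ irreducible, so $X$ is pure $n$-dimensional near $p$. This is what makes $\varphi^{-1}(\Delta\cup\varphi(Y))$ nowhere dense. It is needed for the sheet number $d$ to be constant and for the monic identity to extend by continuity from the unbranched part to all of $U\setminus Y$.
\item Boundedness of the $\sigma_k$ near points of $\Delta\setminus\varphi(Y)$ uses properness of $\varphi$. A compact neighbourhood of such a point must pull back to a compact subset of $U\setminus Y$.
\item Enlarging $\Delta$ to contain $\varphi(Y)$ is unnecessary, since you work on $D\setminus(\Delta\cup\varphi(Y))$ anyway.
\end{itemize}
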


\begin{cor}\normalfont
If $(X,p)$ is a normal complex analytic surface, then every holomorphic function on the punctured germ $X\setminus\{p\}$ extends to a holomorphic germ at $p$.
\end{cor}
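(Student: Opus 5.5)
This is the special case of the Levi extension theorem on normal spaces with $X$ replaced by a small representative of the germ and $Y=\{p\}$. The plan is to check that its two hypotheses hold for a normal surface germ, and then read off the conclusion at the level of germs.

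First I fix a representative. Since $(X,p)$ is normal, I choose an open neighbourhood $U$ of $p$ such that $U$ is a normal complex analytic space. This is possible because normality is an open condition on a coherent analytic space: the non-normal locus is a closed analytic subset not containing $p$, so it can be removed by shrinking.

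Second, I check the codimension hypothesis. $U$ is irreducible at $p$, since $\OO_{X,p}$ is a domain, and $\dim_pX=2$. Hence the subset $Y:=\{p\}$ is a closed analytic subset with $\codim_U Y=2\geq2$. After shrinking, $U$ has pure dimension $2$, so this codimension count holds at every point of $Y$.

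Third, I transfer the conclusion to germs. A holomorphic function on the punctured germ $X\setminus\{p\}$ means a function $h\in\OO(V\setminus\{p\})$ for some neighbourhood $V$ of $p$. I replace $V$ by $V\cap U$, which is still normal as an open subset of a normal space. The Levi extension theorem then gives $\tilde h\in\OO(V\cap U)$ with $\tilde h|_{(V\cap U)\setminus\{p\}}=h$. Its germ at $p$ is the required extension.

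Uniqueness holds because $V\setminus\{p\}$ is dense in $V$: the point $p$ is not isolated, since the dimension is $2$. Equivalently, in sheaf language, $\OO_{X,p}=(j_*\OO_{X\setminus\{p\}})_p$.

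The only step needing care is the first one, the openness of normality, which ensures that the Levi extension theorem, stated for normal spaces, applies to a representative rather than only to the germ. This follows from the coherence of the normalization sheaf (Oka--Grauert), so I expect no real obstacle. Everything else is a direct specialization of the theorem.
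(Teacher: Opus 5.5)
Your proposal is correct and follows the same route as the paper: there the corollary is stated without further argument as the specialization of the Levi extension theorem to $Y=\{p\}$, which has codimension two in a normal surface. Your additional steps (a normal, pure two-dimensional representative, and uniqueness by density of $X\setminus\{p\}$) only make explicit what the paper leaves implicit.
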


Similarly, we have (\cite{Demailly} Definition 7.1, Theorem 7.3, and Definition 7.4, pp. 110–112):

\begin{teo}[Riemann extension theorem]
\label{theorem:riemmanextension}
\normalfont
Let $X$ be a normal complex analytic variety and let
$f\in\mathcal O(X_{\reg})$. If $f$ is locally bounded near
$X_{\sing}$, then there exists a unique function
$\widetilde f\in\mathcal O(X)$ such that
\[
\widetilde f|_{X_{\reg}}=f.
\]
\end{teo}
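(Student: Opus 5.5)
The statement is the Riemann extension theorem for normal varieties: a holomorphic function $f$ on $X_{\reg}$ that is locally bounded near $X_{\sing}$ extends uniquely to $\widetilde f\in\mathcal O(X)$. The plan has three steps: (a) extend $f$ as a weakly holomorphic function, (b) show that weakly holomorphic functions are integral over $\OO_{X}$, and (c) conclude using normality. Uniqueness is immediate: $X_{\reg}$ is dense in $X$, so two continuous extensions that agree on $X_{\reg}$ agree everywhere.

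For existence, work locally at a point $p\in X_{\sing}$. Choose a representative $X\cap\Omega$ with $\Omega\subset\C^m$ and $f$ bounded on $X_{\reg}\cap\Omega$. Since the statement is local and $\OO_{X,p}$ is a domain, the germ $(X,p)$ is irreducible; say it has dimension $n$.

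By the local parametrization theorem, after a generic linear change of coordinates there is a finite branched covering
\[
\rho\colon X\cap\Omega\to D\subset\C^n
\]
of some degree $d$, with a proper analytic critical set $\Delta\subset D$. We may enlarge $\Delta$ so that $\rho^{-1}(\Delta)\supset X_{\sing}$. Over $D\setminus\Delta$, the map $\rho$ is an unramified $d$-sheeted cover by points of $X_{\reg}$. Let $\sigma_1,\dots,\sigma_d$ be the elementary symmetric functions of the values of $f$ on the fibres. These are holomorphic on $D\setminus\Delta$ and bounded because $f$ is bounded. The classical Riemann extension theorem on the smooth domain $D$ then extends them to $\sigma_j\in\OO(D)$.

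Setting $a_j=(-1)^j\sigma_j\circ\rho\in\OO_X$, the function $f$ satisfies the monic equation
\[
f^d+a_1f^{d-1}+\cdots+a_d=0
\]
on $X_{\reg}\setminus\rho^{-1}(\Delta)$, hence on all of $X_{\reg}$ by density. This is the integrality step (b).

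The main obstacle is to view $f$ as an element of $\Frac(\OO_{X,p})$, so that normality can be applied. The approach is to use a primitive element. After shrinking, pick a generic linear coordinate function $u$ on $\C^m$ that separates the points of the generic fibres of $\rho$, and let $P(T)\in\OO(D)[T]$ be its minimal polynomial, with discriminant $\delta\not\equiv0$. The Lagrange-interpolation formula
\[
\delta\cdot f=\sum_{k=0}^{d-1} b_k\,u^k ,
\]
with $b_k$ given by bounded symmetric expressions in $f$ and $u$ over the fibres, shows after the same extension argument that $b_k\in\OO(D)$. Hence $(\delta\circ\rho)f$ coincides on $X_{\reg}$ with an element $g\in\OO_{X,p}$.

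Since $\delta\circ\rho\neq0$ in the domain $\OO_{X,p}$, we can write $f=g/(\delta\circ\rho)\in\Frac(\OO_{X,p})$. This element is integral over $\OO_{X,p}$ by (b), so by the definition of a normal ring (integrally closed domain) it lies in $\OO_{X,p}$. Therefore $f$ extends holomorphically near $p$. Covering $X_{\sing}$ by such neighbourhoods and gluing via uniqueness gives the global $\widetilde f\in\mathcal O(X)$ with $\widetilde f|_{X_{\reg}}=f$.

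Alternatively, when $\codim X_{\sing}\ge2$ one could try the Levi extension theorem stated above. That route does not cover the case $\codim X_{\sing}=1$, which normality rules out anyway by Serre's criterion, but the integral-closure argument avoids invoking that criterion.
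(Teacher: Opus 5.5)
Your proof is correct and follows essentially the route of the paper, which gives no argument of its own but cites Demailly (Definition 7.1, Theorem 7.3, Definition 7.4). That cited result identifies weakly holomorphic germs with the integral closure of $\OO_{X,x}$ in its ring of fractions, using exactly your ingredients (local parametrization, symmetric functions of fibre values, and the discriminant of a primitive element as universal denominator), so normality forces $f$ to be holomorphic. The one step you pass over quickly is why $g/(\delta\circ\rho)$ is integral: multiplying the monic relation by $(\delta\circ\rho)^d$ gives a holomorphic function on $X$ that vanishes on the dense set $X_{\reg}$, hence is zero in the reduced ring $\OO_{X,p}$.
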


\subsection{Resolution of singularities for foliations on germs of normal surfaces}
The next proposition states that we may lift foliations on singular normal surfaces to
their desingularization.
\begin{propo}[\cite{Camacho1988}]\normalfont
Let $\F$ be a germ of a holomorphic foliation on a normal complex analytic
surface germ $(X,p)$. For a resolution $\pi_0\colon X^*\to X$ at $p$,
there exists a holomorphic foliation $\F^*$ on the smooth surface $X^*$
such that $\F^*$ coincides with the pullback $\pi_0^*\F$ on
$X^*\setminus\pi_0^{-1}(p)$ and
$\Sing(\F^*)\subset\pi_0^{-1}(p)$
is a finite set of isolated points.
\end{propo}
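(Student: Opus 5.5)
The plan is to build $\F^*$ by pulling back a local generator of $\F$ near $p$ and then saturating it. First I will choose a representative $U$ of $(X,p)$ small enough that $p$ is the only singular point of $X$ in $U$; this is possible since $X$ is normal, so $X_{\sing}$ has codimension $\geq2$. Shrinking further, $\Sing(\F)\cap U\subset\{p\}$, because $\Sing(\F)\cap X_{\reg}$ is discrete by the saturation condition on a smooth surface. On $U^*:=\pi_0^{-1}(U)$ the map $\pi_0$ restricts to a biholomorphism $U^*\setminus E\to U\setminus\{p\}$, where $E=\pi_0^{-1}(p)$. Using this biholomorphism, the regular foliation $\F_{\mathrm{reg}}$ on $U\setminus\{p\}$ transports to a nonsingular foliation $\F^0$ on $U^*\setminus E$.

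Next I will extend $\F^0$ across $E$. By Lemma~\ref{lemma:camachovector}, there is a vector field $\vec F\not\equiv0$ on $U$ tangent to $\F$. Its pullback $(\pi_0|_{U^*\setminus E})^*\vec F$ is a holomorphic vector field on $U^*\setminus E$. I expect the main obstacle to be showing that this vector field extends meromorphically across $E$. The first approach I will try works in local coordinates $(x,y)$ at a point of $E$. The ambient coordinate functions $z_j\circ\pi_0$ are holomorphic on $U^*$. The pulled-back field $\vec F^*=a\,\partial_x+b\,\partial_y$ satisfies $\vec F^*(z_j\circ\pi_0)=(\vec F z_j)\circ\pi_0$, which is holomorphic. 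Since $\pi_0$ is generically immersive, some $2\times2$ minor $J$ of the Jacobian of $(z_j\circ\pi_0)$ is not identically zero. Cramer's rule then gives $a$ and $b$ as holomorphic functions divided by $J$, so $a$ and $b$ are meromorphic with poles only along $E$.

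The remaining step is the standard saturation on a smooth surface. Locally, write $\vec F^*=h\cdot(\tilde a\,\partial_x+\tilde b\,\partial_y)$ with $\tilde a,\tilde b$ holomorphic and without common factor, and $h$ meromorphic. The vector fields $\tilde a\,\partial_x+\tilde b\,\partial_y$ then agree up to units on overlaps, so they glue to a foliation $\F^*$ on $U^*$. Its singular set $\{\tilde a=\tilde b=0\}$ is discrete. Since $E$ is compact and $\F^0$ is nonsingular off $E$, the set $\Sing(\F^*)$ lies in $E$ and is finite. Off $E$, the field $\tilde a\partial_x+\tilde b\partial_y$ is proportional to $\vec F^*$ away from the zeros of $\vec F$, which are isolated on $U\setminus\{p\}$. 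There it defines $\pi_0^*\F$. Both $\F^*$ and $\pi_0^*\F$ are saturated foliations agreeing on a dense open set, so by the uniqueness in part (ii) of the earlier proposition they coincide on all of $U^*\setminus E$.
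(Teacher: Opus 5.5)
The paper gives no proof of its own here: the proposition is quoted from \cite{Camacho1988}, and the only ingredient it records is Lemma~\ref{lemma:camachovector}. Your argument is a correct, self-contained proof along the natural lines. You pull back the vector field $\vec F$ of that lemma, obtain meromorphic extension across $E$ from $\vec F^*(z_j\circ\pi_0)=(\vec F z_j)\circ\pi_0$ together with Cramer's rule for a $2\times2$ Jacobian minor that is not identically zero, and then saturate in the factorial ring $\OO_{X^*,q}$.

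A slightly more direct variant is dual to yours. Write $\vec F=\sum a_j\,\partial_{z_j}$. The holomorphic $1$-forms $a_j\,dz_k-a_k\,dz_j$ annihilate $\vec F$ on $X_{\reg}$, and their pullbacks by $\pi_0$ are holomorphic on all of $X^*$. At least one of them is not identically zero, so you can saturate directly without any extension step.

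Two small points need tidying.

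First, discreteness of $\Sing(\F)\cap X_{\reg}$ inside $X_{\reg}$ does not by itself prevent singular points from accumulating at $p$. You can fix this in either of two ways:
\begin{enumerate}
\item use that $\Sing(\F)$ is a closed analytic subset of $X$;
\item shrink only after constructing $\F^*$: $\Sing(\F^*)$ is closed and discrete in $U^*$, so a compact neighbourhood of $E$ contains only finitely many of its points, and properness of $\pi_0$ lets you shrink $U$ to exclude those not on $E$.
\end{enumerate}

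Second, Lemma~\ref{lemma:camachovector} does not say that the zeros of $\vec F$ are isolated on $U\setminus\{p\}$; $\vec F$ may vanish along a curve. So part (ii) of the earlier proposition, which is stated for a discrete exceptional set, does not literally apply. You do not need it. On $X_{\reg}$, write $\vec F=h\,v$ with $h$ holomorphic and $v$ a local generator of $\F$ with isolated zeros. Off $E$, saturating $\vec F^*$ just removes the factor $h\circ\pi_0$, so $\F^*=\pi_0^*\F$ on $U^*\setminus E$ directly.
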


Now we can desingularize the foliation $\F^*$ using Seidenberg's method \cite{Seidenberg1968} via a finite composition of quadratic blow-ups, starting at singular points of $\F^*$ in $\pi^{-1}(p)$.

We finally obtain a proper holomorphic map
$\pi\colon\widetilde X\to X$, on a smooth complex surface $\widetilde X$, and a foliation $\widetilde\F$ on $\widetilde X$ such that:
\begin{enumerate}
\item[(i)] $\widetilde\F$ is regular on $\widetilde X\setminus\pi^{-1}(p)$ and coincides with $\pi^*(\F|_{X\setminus\{p\}})$;
\item[(ii)] $\Sing(\widetilde\F)\subset\pi^{-1}(p)$ consists of simple singularities;
\item[(iii)] $\pi^{-1}(p)=E$ is a normal-crossings divisor; $E$ can be written
$E=\displaystyle\bigcup_{i=1}^{r}K_i$, where each $K_i\subset\widetilde X$ is a compact Riemann surface;

\item[(iv)] $K_i$ is either invariant by $\widetilde\F$ or totally transverse to $\widetilde\F$;

\item[(v)] the resolution of the pair $((X,p),\F)$ is \textit{minimal} in the sense that if $\pi^{-1}(p)$ contains a projective curve $C\subset\pi^{-1}(p)$
with $C\cdot C=-1$,
then the blow-down of $C$ (via Castelnuovo Theorem \cite{Gunning1974}) results in a foliation with some non-reduced singularity.
\end{enumerate}

\begin{defi}
We shall refer to the map $\pi\colon\widetilde X\to X$
above and to the pair $(\widetilde X,\widetilde\F=\pi^*(\F))$ as a \textit{resolution of singularities of the pair $(X,\F)$}.
\end{defi}

We can also add the following to the above statement:

\begin{enumerate}
\item[(vi)] If $(X,0)$ is a normal-tree then the final exceptional divisor of the resolution of singularities of a pair $(X,\F)$ is a tree.
\end{enumerate}

Indeed, the resolution of singularities of a foliation does not introduce any cycles.

\begin{remark}
The condition that $(X,0)$ is a normal space is essential. Indeed, we still have a resolution of singularities when the space $(X,0)$ is reduced, not necessarily normal. However, the singular locus $X_{\sing}$ may have dimension $1$ and we can no longer assume that the exceptional divisor is a finite union of transverse compact analytic curves.
\end{remark}

\section{Holomorphic First Integrals of Foliations on Complex Analytic Spaces}\label{sec:holomorphic}

\subsection{Holomorphic First Integrals of Foliations on Analytic Spaces}
Let $X$ be a complex analytic $2$-variety and $\F$ a holomorphic foliation by curves on $X$.

A \textit{holomorphic first integral for $\F$} is a holomorphic function $f\colon X\to\C$ such that:
\begin{enumerate}
\item[(i)] $f$ is not constant;
\item[(ii)] $f$ is constant on each leaf of $\F$.
\end{enumerate}

If $p\in X$ is a point and $\F$ is a germ of foliation by curves at $p$ then by a (germ of) \textit{holomorphic first integral for $\F$} we mean a germ $f\in\OO_{X,p}$
admitting a representative
$f_U\colon U\subset X\to\C$ in a neighborhood $p\in U\subset X$, where $\F$ admits a representative $\F_U$ such that $f_U\colon U\to\C$ is a holomorphic first integral for $\F_U$ in $U$.

If this is the case then:
\begin{enumerate}
\item[(a)] the leaves of $\F_U$ are closed in $U\setminus\{p\}$;
\item[(b)] only finitely many leaves adhere to $p$.
\end{enumerate}

Or, equivalently:
\begin{enumerate}
\item[(a')] the leaves of $\F_U$ are either closed or separatrices;
\item[(b')] there are only finitely many separatrices.
\end{enumerate}

Indeed, under the assumption that all leaves are closed in $U\setminus\{p\}$, the Remmert--Stein theorem implies that the closure of every leaf adhering to $p$ is an invariant analytic curve, whose irreducible components are separatrices. Conversely, every separatrix determines a leaf adhering to $p$. Since an analytic curve germ has only finitely many irreducible components, the finiteness of the leaves adhering to $p$ is equivalent to the finiteness of the separatrices.

The classical theorem of Mattei--Moussu states that when $X$ is smooth (i.e., biholomorphic to a neighborhood of the origin $0\in\C^2$), the conditions above actually imply the existence of a holomorphic first integral. In dimension $n\geq2$ it reads as:

\begin{teo}[Theorem B page 473 in \cite{MatteiMoussu1980}]\normalfont
Let $\omega$ be a germ at $0\in\C^n$ of integrable holomorphic $1$-form. Then $\omega$ admits a holomorphic first integral if and only if there is a representative $\F_U$ for the foliation $\omega=0$ in  $U$ such that:
\begin{enumerate}
\item the leaves of the foliation $\F_U$ in $U$ are closed in $U\setminus\Sing(\omega)$;
\item only finitely many leaves of $\F_U$ adhere to $0$.
\end{enumerate}
\end{teo}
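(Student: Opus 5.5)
The statement is the classical Mattei--Moussu Theorem~B. I would prove it directly on $(\C^n,0)$, in two steps: first the surface case $n=2$ by holonomy on a reduced resolution, then the case $n\ge 3$ by restricting to a generic plane and prolonging the first integral.

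\emph{Necessity.} Suppose $f$ is a holomorphic first integral. Reduce $f$ so that it is not a power. Each leaf is then a connected component of a fibre $f^{-1}(c)\setminus\Sing(\omega)$, so it is closed in $U\setminus\Sing(\omega)$. Only the components of $f^{-1}(0)$ pass through $0$, and there are finitely many of them, so only finitely many leaves adhere to $0$.

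\emph{Sufficiency for $n=2$.}
\begin{enumerate}
\item Take Seidenberg's reduction $\pi\colon\widetilde U\to U$, with exceptional divisor $E=\bigcup K_i$.
\item Hypothesis (ii.b) says there are finitely many separatrices. Hypothesis (ii.a) rules out nondicritical saddle-nodes and irrational linear singularities, because each of these produces leaves that are not closed or that accumulate. Dicritical components are excluded as well, since each would give infinitely many leaves adhering to $0$.
\item Hence every singularity of $\widetilde\F$ is linearizable, of the form $px\,dy+qy\,dx$ with $p,q\in\N$.
\item Fix a component $K_i$ and a transversal $\Sigma$ at a regular point of $K_i$. The holonomy group $H_i\subset\Diff(\C,0)$ has finite orbits, because the leaves are closed. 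By the Mattei--Moussu lemma, a finitely generated subgroup of $\Diff(\C,0)$ whose orbits are finite is finite. Its elements are then periodic and simultaneously linearizable, so $H_i$ is finite cyclic and has a first integral $z^{k_i}$ on $\Sigma$.
\item Spread this first integral along the leaves to obtain a holomorphic first integral on a neighborhood of $K_i$ minus the singular points. It extends across those points by the linear local models $x^py^q$.
\item Since the dual graph of $E$ is a tree, glue these local integrals across the corners. Proceed by induction from a root component, matching powers at each corner: at a corner, the integral near one component restricts to a power of the local $x^py^q$, and the tree structure means no cycle forces an inconsistent monodromy.
\item The result is $\tilde f$ holomorphic near $E$ and constant on $E$. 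It descends to a function holomorphic on $U\setminus\{0\}$, which extends across $0$ by Hartogs.
\end{enumerate}

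\emph{Sufficiency for $n\ge3$.}
\begin{enumerate}
\item Choose a generic plane $i\colon(\C^2,0)\to(\C^n,0)$ transverse to the strata of $\Sing(\omega)$, so that $i^*\omega$ has an isolated singularity.
\item The restricted foliation inherits (ii.a) and (ii.b), since the leaves of $i^*\F$ are the intersections of leaves of $\F$ with the plane. The surface case gives a first integral $f_0$ of $i^*\F$.
\item Apply the Mattei--Moussu extension theorem (page~471 of \cite{MatteiMoussu1980}). Since $\codim\Sing(\omega)\ge2$, $f_0$ extends to a first integral $f$ on $(\C^n,0)$.
\item The extension works as follows. Build a formal extension order by order; the obstruction lies in a cohomology group that vanishes by de Rham division. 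Convergence comes from holonomy-transport along the leaves off $\Sing(\omega)$, followed by Hartogs across $\Sing(\omega)$, which has codimension at least two.
\end{enumerate}

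\emph{Main obstacle.} I expect the hardest step to be the finite-orbit lemma for $H_i$, which needs a careful argument that a nonlinearizable or infinite group forces infinite orbits. The second delicate point is excluding dicritical and saddle-node components using only closedness and finiteness of the adhering leaves.
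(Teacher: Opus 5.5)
The paper does not prove this statement; it quotes it from Mattei--Moussu. Your outline is the route the paper itself follows for its generalizations: the proof of Theorem~\ref{thm:A} for surfaces, and a generic plane section plus prolongation for Theorem~\ref{thm:B}. As written, however, step~6 of the surface case has a genuine gap.

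At a corner $s=K_i\cap K_j$ with primitive local integral $Q_s$ (your $x^py^q$), the integral built from $H_i$ is $\varphi_i\circ Q_s$ near $s$, and the one built from $H_j$ is $\varphi_j\circ Q_s$. The invariance groups $\Gamma_i=\{g:\varphi_i\circ g=\varphi_i\}$ and $\Gamma_j$ are the finite cyclic groups induced by $H_i$ and $H_j$ on the local leaf space, the $Q_s$-line. Each $\varphi$ is a power only in a coordinate that linearizes its own group. By re-choosing $Q_s$ you can make one of them a power of $Q_s$, but in general not both. In fact, nonconstant germs $\alpha,\beta$ with $\alpha\circ\varphi_i=\beta\circ\varphi_j$ exist if and only if $\langle\Gamma_i,\Gamma_j\rangle$ is finite. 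Finiteness of $H_i$ and $H_j$ separately does not give this: two distinct involutions in $\Diff(\C,0)$ already generate an infinite group, since their product is a nontrivial germ tangent to the identity. The tree property does not help either, because it only excludes monodromy around cycles, while this obstruction sits at a single corner.

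The missing idea is to apply the finite-orbit lemma once more at each corner. When you attach $K_j$ at $s$, let $\Gamma_{\mathrm{old}}$ be the invariance group, read on the $Q_s$-line, of the integral already built on the subtree. Every element of $\Gamma_j$, and inductively of $\Gamma_{\mathrm{old}}$, sends a local leaf $\{Q_s=c\}$ (connected, since $\gcd(p,q)=1$) to a local leaf on the same global leaf. Hypothesis (ii) therefore gives finite pseudo-orbits off a countable set, so $\langle\Gamma_{\mathrm{old}},\Gamma_j\rangle$ is finite cyclic. Linearize it by $\widetilde Q_s=\psi\circ Q_s$. After outer reparametrization the two integrals become $\widetilde Q_s^{\,a}$ and $\widetilde Q_s^{\,b}$, and your lcm matching then works.

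Alternatively, induct on the number of blow-ups, so that each stage involves a single component $E_1$. Apply the finite-orbit lemma to the group generated by $H(E_1)$ and the transported invariance groups of the primitive local first integrals at the singular points of $E_1$. These integrals exist by induction, and their invariance groups preserve leaves because primitive integrals have connected fibres. The quotient map then factors through every local integral, and no corner gluing is needed.

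The holomorphic clause of the paper's Lemma~\ref{lemma:tree-gluing} takes the same shortcut as your step~6. Its formal clause avoids the issue by working with the maximal holomorphic quotient of the given formal integral.

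Two smaller points.
\begin{enumerate}
\item Step~3 asserts linearizability before anything excludes non-linearizable resonant saddles ($\lambda\in\Q_{<0}$), which step~2 does not mention. You need the periodicity of the separatrix holonomy from step~4 (each singular point lies on an invariant component of $E$). You also need the Mattei--Moussu criterion that a resonant saddle with periodic holonomy is linearizable. So steps~3 and~4 should be swapped, as in the paper's proof of Theorem~\ref{thm:A}.
\item Closedness of the leaves alone does not keep their lifts away from $E$. In step~4 you need (ii.b) to control the leaves accumulating on $E$. In the $n\ge3$ reduction, inheriting (ii.b) needs Remmert--Stein: each adhering ambient leaf has analytic closure, whose trace on the plane has finitely many branches. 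The paper argues this way in its proof of Theorem~\ref{thm:B}.
\end{enumerate}
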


Our next result is a version of this theorem for normal singular varieties:

\begin{teo1}\normalfont
Let $(X,0)$ be a germ of a normal-tree complex analytic $2$-variety. Given a germ of a holomorphic foliation by curves $\F$ at $0$ in $(X,0)$, the following conditions are equivalent:
\begin{enumerate}
\item[(i)] $\F$ admits a holomorphic first integral;
\item[(ii)] $\F$ has a representative $\F_U$ in a neighborhood
$0\in U\subset X$ such that:
\begin{enumerate}
\item[(ii.a)] $\F_U$ has closed leaves in $U\setminus\{0\}$;
\item[(ii.b)] only finitely many leaves adhere to the origin $0\in X$.
\end{enumerate}
\end{enumerate}
\end{teo1}

Theorem \ref{thm:A} will be proved throughout the next sections.

\subsection{Holomorphic foliations, closed leaves, separatrices and first integral}

Let $\F$ be a holomorphic foliation on a germ of a normal complex analytic $2$-variety $(X,0)$. We consider a resolution $\pi\colon\widetilde X\to X$
of the pair $((X,0),\F)$, with $\pi^*(\F)=\widetilde\F$
and $E=\displaystyle\bigcup_{j\in J}E_j=\pi^{-1}(0)$.

We say that a germ of irreducible analytic curve $S\subset(X,0)$ through $0$ is a \textit{separatrix of $\F$} if $S$ is $\F$-invariant, i.e., $S\setminus\{0\}$ is a leaf of $\F\big|_{X\setminus\{0\}}$.

The germ $\F$ is \textit{dicritical} if it admits infinitely many separatrices and \textit{non-dicritical} otherwise.

\begin{lema}\normalfont
The germ of foliation $\F$ is non-dicritical if and only if every component $E_j$ of the exceptional divisor $\pi^{-1}(0)$ is invariant by $\widetilde\F$.
\end{lema}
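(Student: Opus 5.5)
The plan is to pass to the resolution $\pi\colon\widetilde X\to X$ of the pair $((X,0),\F)$ fixed above and to set up a bijection between separatrices of $\F$ and germs of $\widetilde\F$-invariant irreducible curves $\widetilde S\not\subset E$ meeting $E$. Given a separatrix $S$, the strict transform $\widetilde S:=\overline{\pi^{-1}(S\setminus\{0\})}$ is an irreducible analytic curve, since $\pi$ is proper and biholomorphic off $E$. It is $\widetilde\F$-invariant and meets $E$ at a single point $q$, because it is the closure of one irreducible punctured germ. Conversely, let $\Gamma$ be an irreducible germ of $\widetilde\F$-invariant curve at a point $q\in E$ with $\Gamma\not\subset E$. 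Then $\pi(\Gamma)$ is an analytic curve germ at $0$ by Remmert's proper mapping theorem, it is irreducible, and it is $\F$-invariant, so it is a separatrix. Distinct germs $\Gamma$ give distinct separatrices, since $\pi$ is injective off $E$.

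\emph{Step 1: a non-invariant component forces dicriticity.} Suppose some $E_j$ is not invariant. By property (iv) of the resolution, $E_j$ is then totally transverse to $\widetilde\F$. Take any $q\in E_j$ off the finitely many points of $\Sing(\widetilde\F)$ and off the other components $E_i$. The local leaf $L_q$ of $\widetilde\F$ through $q$ is a smooth disc transverse to $E_j$, with $L_q\cap E=\{q\}$ near $q$. By the bijection, each $\pi(L_q)$ is a separatrix, and different $q$ give different germs $L_q$, hence different separatrices. There are uncountably many such $q$, so $\F$ is dicritical.

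\emph{Step 2: all components invariant implies non-dicritical.} Assume every $E_j$ is invariant, and let $S$ be a separatrix with strict transform $\widetilde S$ meeting $E$ at $q\in E_j$. If $q$ were a regular point of $\widetilde\F$, the local leaf through $q$ would be an open piece of $E_j$, since $E_j$ is invariant. Then the invariant irreducible germ $\widetilde S$ would be contained in $E_j$, which is a contradiction. Hence $q\in\Sing(\widetilde\F)$, and this set is finite by property (ii). At each such $q$ the singularity is simple (reduced), so $\widetilde\F$ has at most two irreducible invariant curve germs through $q$. This holds in the non-degenerate case, where the two invariant curves are tangent to the eigendirections, and also for saddle-nodes, which have at most two separatrices, the strong one and possibly a convergent weak one. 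At a corner $E_i\cap E_j$ both germs are already components of $E$. At any other singular point of $E$ one of them lies in $E$, leaving at most one germ outside $E$. So the number of separatrices is bounded by the number of singular points of $\widetilde\F$ on $E$, which is finite, and $\F$ is non-dicritical.

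The step I expect to need the most care is the local count in Step 2, namely that a reduced singularity has at most two invariant irreducible curve germs. I will quote the classical Seidenberg/Camacho--Sad description of simple singularities for this, rather than prove it. The bookkeeping for the strict transform (irreducibility, and that it meets $E$ in one point) relies only on properness of $\pi$ together with the fact that $\pi$ is biholomorphic off $E$, which I will state briefly. The tree hypothesis is not needed for this lemma.
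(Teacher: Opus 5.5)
Your proof is correct and follows the same route as the paper. A non-invariant component is crossed transversely by a continuum of local leaves, and their images are pairwise distinct separatrices. If instead every component is invariant, the strict transform of each separatrix must pass through one of the finitely many reduced singular points of $\widetilde\F$ on $E$, each of which carries at most two separatrices. Your write-up makes explicit the strict-transform correspondence and the corner bookkeeping that the paper's sketch leaves implicit.
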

\begin{proof} This is standard for foliation germs at $0\in \mathbb C^2$.
The argument for germs on singular normal varieties is the same.
If the exceptional divisor of the resolution of the pair $((X,0),\F)$ exhibits some noninvariant
component, then this component is intersected transversely by infinitely many leaves that
project by the resolution morphism onto separatrices of the original foliation. In this case the foliation is dicritical. Conversely,
if all the components of the exceptional divisor of the resolution of
$((X,0),\F)$ are invariant, then the only separatrices of the original
foliation are those that lift to separatrices of singularities of the
lifted foliation $\widetilde\F$. Since $\widetilde\F$ has only finitely
many singularities and each reduced singularity has at most two
separatrices, the foliation is non-dicritical.

\end{proof}

Next, we shall need the following lemma.
\begin{lema}\normalfont
For a leaf $L$ of $\F$, the following conditions are equivalent:
\begin{enumerate}
\item[(i)] $L$ is closed in $U$ (and $0\notin\overline{L}$);
\item[(ii)] $\widetilde L$ is closed in $\widetilde U$ and $\overline{\widetilde L}\cap E=\emptyset$.
\end{enumerate}
\end{lema}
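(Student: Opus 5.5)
The equivalence is essentially topological. It rests on two facts: $\pi\colon\widetilde U\to U$ is proper, and $\pi$ restricts to a biholomorphism
\[
\widetilde U\setminus E\;\longrightarrow\; U\setminus\{0\},
\]
where $\widetilde U=\pi^{-1}(U)$ and $E=\pi^{-1}(0)$. Here $L\subset U\setminus\{0\}$ is a leaf of $\F_U$, and $\widetilde L:=\pi^{-1}(L)$ is the corresponding leaf of $\widetilde\F$; it lies in $\widetilde U\setminus E$ by property (i) of the resolution of the pair. I will prove the two implications separately, using only continuity and properness of $\pi$ together with $\pi(E)=\{0\}$.

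\emph{(i)$\Rightarrow$(ii).} Assume $L$ is closed in $U$, so $\overline L=L$ and in particular $0\notin\overline L$.
\begin{enumerate}
\item[1.] By continuity, $\pi^{-1}(L)$ is closed in $\widetilde U$. Since $\pi|_{\widetilde U\setminus E}$ is a bijection onto $U\setminus\{0\}\supset L$, we have $\pi^{-1}(L)=\widetilde L$, so $\widetilde L$ is closed.
\item[2.] Because $\widetilde L$ is closed, $\overline{\widetilde L}=\widetilde L\subset\widetilde U\setminus E$, hence $\overline{\widetilde L}\cap E=\emptyset$.
\end{enumerate}
Alternatively, one can argue step 2 directly: if $q\in\overline{\widetilde L}\cap E$, take $q_k\in\widetilde L$ with $q_k\to q$. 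Then $\pi(q_k)\in L$ and $\pi(q_k)\to\pi(q)=0$, so $0\in\overline L$, a contradiction.

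\emph{(ii)$\Rightarrow$(i).} Assume $\widetilde L$ is closed in $\widetilde U$ and $\overline{\widetilde L}\cap E=\emptyset$. A proper continuous map between locally compact Hausdorff spaces is closed, so $\pi(\widetilde L)=L$ is closed in $U$. It remains to check $0\notin L$, which holds because $\widetilde L\cap E=\emptyset$. If one prefers to avoid the closed-map statement, argue with sequences:
\begin{enumerate}
\item[1.] Take $x_k\in L$ with $x_k\to x\in U$, and set $q_k=\pi^{-1}(x_k)\in\widetilde L$.
\item[2.] The set $\{x_k\}\cup\{x\}$ is compact, so by properness $q_k$ lies in a compact subset of $\widetilde U$. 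Pass to a subsequence $q_k\to q$.
\item[3.] Since $\widetilde L$ is closed, $q\in\widetilde L$, and the hypothesis gives $q\notin E$.
\item[4.] By continuity, $x=\pi(q)\in\pi(\widetilde L)=L$, and $x\neq0$.
\end{enumerate}
Hence $L$ is closed in $U$ and $0\notin\overline L$.

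The only delicate point is the choice of representative. We need $\pi\colon\widetilde U\to U$ to be proper, which holds when $\widetilde U$ is taken to be the full preimage $\pi^{-1}(U)$. We also need "closed in $U$" to be read in the same $U$ on which both foliations are considered. Once this bookkeeping is fixed at the outset, the remaining steps are formal.
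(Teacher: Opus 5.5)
Your proof is correct and follows the same route as the paper: continuity of $\pi$ pulls closedness back, properness of $\pi\colon\widetilde U\to U$ (hence $\pi$ is a closed map) pushes it forward, and $\pi$ is a homeomorphism off $E$. The paper establishes the two side conditions through explicit neighborhoods $A\ni 0$ and $\widetilde A\supset E$. You instead note, more economically, that they follow automatically from closedness, because $L\subset U\setminus\{0\}$ and $\widetilde L\subset\widetilde U\setminus E$.
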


\begin{proof}\textup{(i)$\Rightarrow$(ii).} Assume that $L$ is closed in
$U$ and hence that $0\notin\overline{L}$. Then there is an open
neighborhood $0\in A\Subset U$ such that
$L\cap\overline A=\emptyset$; see Figure~\ref{fig:leaf-away-origin}.
\begin{figure}[ht!]
    \centering
    \includegraphics[width=0.5\linewidth]{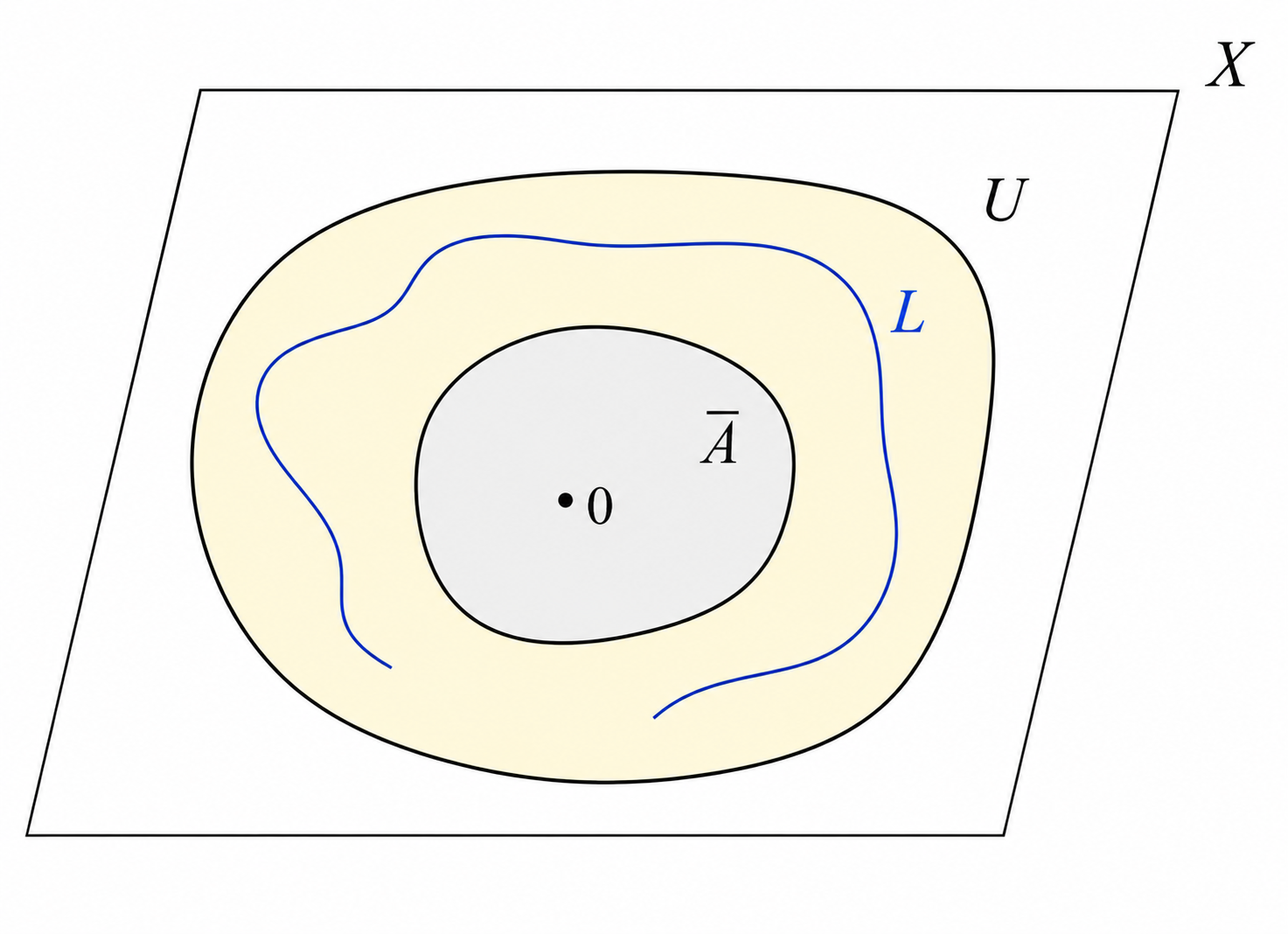}
    \caption{A relatively compact neighborhood of the origin disjoint from
    the closed leaf $L$.}
    \label{fig:leaf-away-origin}
\end{figure}

The inverse image $\pi^{-1}(A)=:\widetilde A$
is an open neighborhood of $\pi^{-1}(0)=E$ in $\widetilde X$. Since $L\cap A=\emptyset$ we have $\pi^{-1}(L)\cap\pi^{-1}(A)=\emptyset$ (recall that
$\widetilde p\in\pi^{-1}(L)\cap\pi^{-1}(A)
\Rightarrow
\pi(\widetilde p)\in L\cap A$). Thus $\widetilde L\cap\widetilde A=\emptyset$
and $\widetilde A$ is an open neighborhood of $E=\pi^{-1}(0)$ in $\widetilde X$.
\begin{figure}[ht!]
    \centering
    \includegraphics[width=0.5\linewidth]{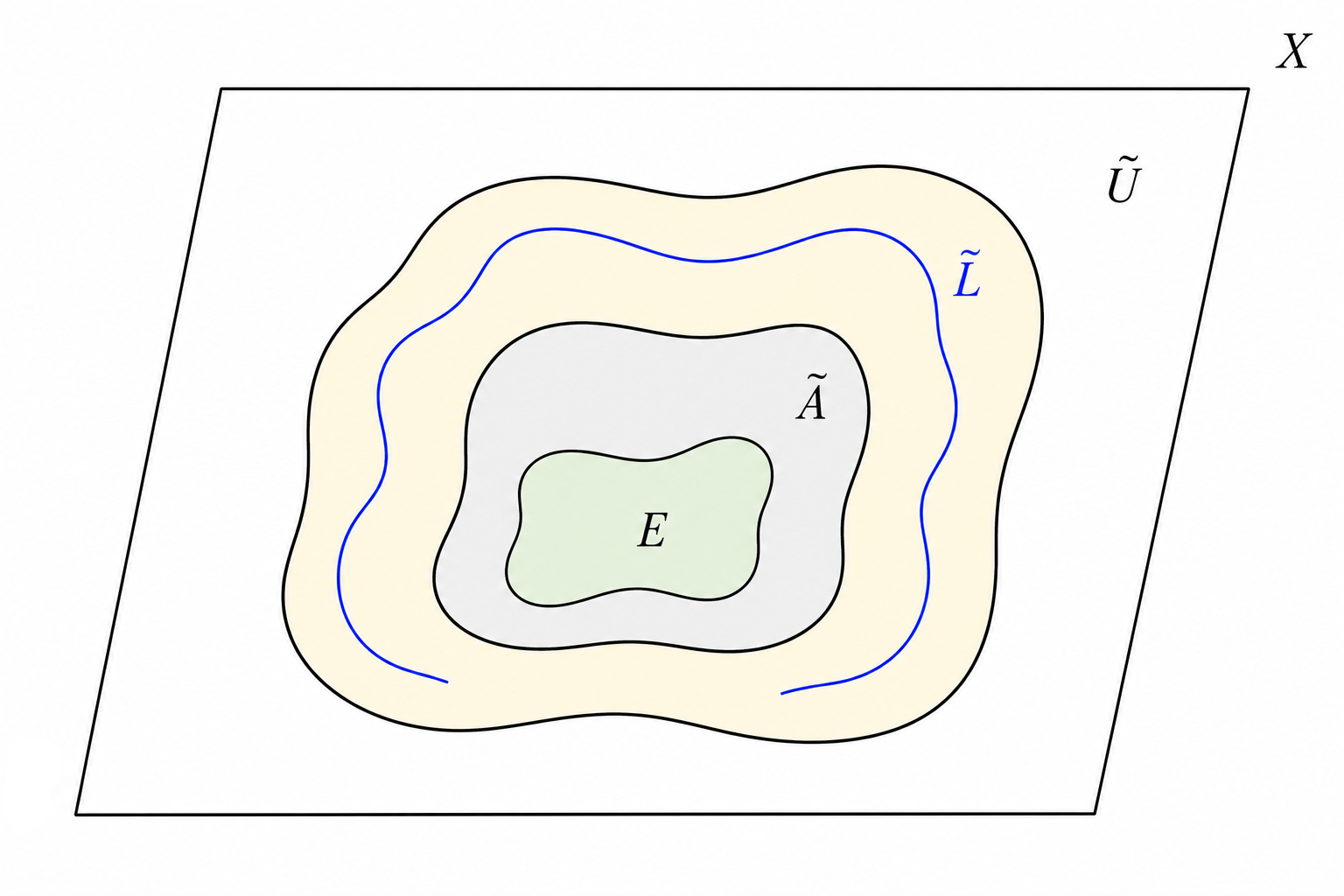}
    \caption{The lifted leaf $\widetilde L$ and a neighborhood
    $\widetilde A$ of the exceptional divisor $E$.}
    \label{fig:lifted-leaf}
\end{figure}
Therefore $\overline{\widetilde L}\cap E=\emptyset$. Because
\[
\pi\big|_{\widetilde U\setminus E}\colon
\widetilde U\setminus E\to U\setminus\{0\}
\]
is a homeomorphism, we conclude that $\widetilde L$ is closed in $\widetilde U$.

\noindent $(ii)\Rightarrow(i)$: Assume now that $\widetilde L$ is closed in $\widetilde U$ and that $\widetilde L\cap E=\emptyset$. Then there is an open neighborhood $\widetilde A\subset\widetilde U$ of $E$ such that $\widetilde A\cap\widetilde L=\emptyset$. Since $\pi$ is proper, it is a closed map. Hence
\[
A:=U\setminus\pi(\widetilde U\setminus\widetilde A)
\]
is an open neighborhood of $0$, because $\pi^{-1}(0)=E\subset\widetilde A$. Moreover, $\pi^{-1}(A)\subset\widetilde A$, and therefore $L\cap A=\emptyset$. Thus, $0\notin\overline L$. Finally, since $\widetilde L$ is closed and $\pi\colon\widetilde U\to U$ is proper, $L=\pi(\widetilde L)$ is closed in $U$.
\end{proof}
In the above framework, by a leaf of $\F$ we mean a leaf $L\subset X\setminus\{0\}$
of the nonsingular foliation $\F\big|_{X\setminus\{0\}}$. Let $0\in U\subset X$ be a small neighborhood of $0$ in $X$, a representative $\F_U$ for $\F$ in $U$, and let
$L\subset U\setminus\{0\}$ be a leaf of $\F_U$.

\begin{lema}
\label{lemma:4}\normalfont Assume that $\F$ is non-dicritical. The following conditions are equivalent:
\begin{enumerate}
\item[(i)] $L$ is closed in $U$;
\item[(ii)] $\pi^{-1}(L)$ is closed in $\widetilde U:=\pi^{-1}(U)\subset\widetilde X$.
\end{enumerate}
Moreover:
\begin{enumerate}
\item[(a)] $0\in\overline{L}\subset U$ iff $\overline{\pi^{-1}(L)}\cap E\neq\emptyset$ in $\widetilde U$;

\item[(b)] If $\overline L$ is a separatrix of $\F$, then $L$ is
closed in $U\setminus\{0\}$ and $0\in\overline L$. Conversely, if
$L$ is closed in $U\setminus\{0\}$ and $0\in\overline L$, then
$\overline L=L\cup\{0\}$ is a one-dimensional invariant analytic
subset of $U$. Consequently, every irreducible component of the germ
$(\overline L,0)$ is a separatrix of $\F$. In particular, if
$(\overline L,0)$ is irreducible, then $\overline L$ is a separatrix.
\end{enumerate}
\end{lema}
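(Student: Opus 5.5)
The plan is to transport everything through the biholomorphism
$\pi|_{\widetilde U\setminus E}\colon \widetilde U\setminus E\to U\setminus\{0\}$ and then to handle the origin separately, using properness of $\pi$. I will write $\widetilde L=\pi^{-1}(L)$. Since $L\subset U\setminus\{0\}$, we have $\widetilde L\subset\widetilde U\setminus E$, and $\widetilde L$ is a leaf of $\widetilde\F$. I read ``closed in $U$'' in (i) in the same sense as the previous lemma, namely as closedness of $L$ in $U\setminus\{0\}$ together with the corresponding condition at the origin. Concretely, the equivalence (i)$\Leftrightarrow$(ii) splits into two parts.
\begin{enumerate}
\item[(1)] $L$ is closed in $U\setminus\{0\}$ if and only if $\widetilde L$ is closed in $\widetilde U\setminus E$. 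This is immediate, because $\pi$ restricts to a homeomorphism there.
\item[(2)] The behaviour at the origin matches the behaviour along $E$. This is statement (a).
\end{enumerate}

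For (a), first suppose $0\in\overline L$. Choose $x_k\in L$ with $x_k\to0$ and put $\widetilde x_k=\pi^{-1}(x_k)$. By properness of $\pi$ on a compact neighborhood of $0$, a subsequence of $\widetilde x_k$ converges to some point $\widetilde x$, and $\pi(\widetilde x)=0$ forces $\widetilde x\in E$. Hence $\overline{\widetilde L}\cap E\neq\emptyset$. Conversely, if $\widetilde x_k\in\widetilde L$ tends to a point of $E$, then continuity of $\pi$ gives $\pi(\widetilde x_k)\to0$, so $0\in\overline L$. This is essentially the argument already given in the previous lemma, and non-dicriticality is not needed here. Combining (a) with (1) gives (i)$\Leftrightarrow$(ii): a subset of $\widetilde U$ whose closure meets $E$ is closed in $\widetilde U$ exactly when it is closed away from $E$ and absorbs its limit points on $E$. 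The same bookkeeping holds downstairs at $0$.

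For (b), suppose $\overline L=S$ is a separatrix. Then $S\setminus\{0\}=L$ is a leaf, and $S$, being an analytic curve, is closed in $U$. Hence $L=S\cap(U\setminus\{0\})$ is closed in $U\setminus\{0\}$, and $0\in S=\overline L$. For the converse, suppose $L$ is closed in $U\setminus\{0\}$ and $0\in\overline L$. Then $\overline L=L\cup\{0\}$, and $L$ is a one-dimensional analytic subset of $U\setminus\{0\}$: a closed leaf of a regular foliation is an embedded, hence analytic, curve. The Remmert--Stein theorem then shows that $\overline L$ is analytic in $U$, since we are adjoining the single point $\{0\}$, which has dimension $0<1$. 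Invariance holds on $L$, so every irreducible component of the germ $(\overline L,0)$ is an invariant irreducible curve through $0$, that is, a separatrix.

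The step I expect to be the main obstacle is justifying that a leaf which is closed in $U\setminus\{0\}$ is a properly embedded analytic curve there, which is the hypothesis Remmert--Stein requires. I would argue this via flow boxes: closedness of the leaf implies it meets each flow box in only finitely many plaques locally, which excludes recurrence. This is the place where non-dicriticality (finitely many separatrices) could alternatively be used, through lifting to $\widetilde X$ and invoking the fact that $E$ is invariant. In that alternative, $\overline{\widetilde L}$ meets $E$ only at the finitely many singular points of $\widetilde\F$, where $\widetilde L$ must be a separatrix of a reduced singularity. That gives an analytic closure directly.
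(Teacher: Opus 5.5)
Your proof is correct and essentially the paper's: (i)$\Leftrightarrow$(ii) and (a) come from the homeomorphism $\pi|_{\widetilde U\setminus E}$ together with continuity and properness of $\pi$, and (b) from Remmert--Stein applied to the closed leaf. Your sequential-compactness step in (a) makes explicit the properness that the paper's one-line homeomorphism argument leaves implicit, and your flow-box remark is completed by the standard Baire argument (the countable closed trace of $L$ on a transversal has an isolated point, and holonomy makes every point isolated); the closing alternative via non-dicriticality is unnecessary, since the paper never uses that hypothesis, and as stated it is unproved, because ruling out accumulation of $\widetilde L$ at regular points of $E$ is essentially what Remmert--Stein gives you, so drop it.
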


\begin{proof}[Proof of Lemma ~\ref{lemma:4}]
(i)$\Rightarrow$(ii): Since $\pi\colon\widetilde X\to X$ is continuous, if $L\subset U$ is closed then
$\widetilde L=\pi^{-1}(L)\subset \widetilde X$ is closed.

(ii)$\Rightarrow$(i). Let $\widetilde L\subset\widetilde X$ be closed. Since $\pi\colon\widetilde X\to X$ is a proper map, it is closed and therefore $L=\pi(\widetilde L)$ is closed in $U$.

(a) Since $\pi\big|_{\widetilde X\setminus E}\colon\widetilde X\setminus E\to X\setminus\{0\}$ is a homeomorphism, we conclude that
$0\in\overline{L}$ iff $\pi^{-1}(0)\cap\overline{\widetilde L}\neq\emptyset$, where $\widetilde L=\pi^{-1}(L)$.

(b) Let $\overline{L}$ be a separatrix of $\F$. Then $\overline{L}\setminus\{0\}$ is a leaf of $\F$ and therefore
$\overline{L}\setminus\{0\}=L$,
so that $L$ is closed in $U\setminus\{0\}$ and $0\in\overline{L}\setminus L\subset \overline{L}$.
Assume now that $L$ is closed in $U\setminus\{0\}$ and that
$0\in\overline L$. Since $L$ is a leaf of the regular holomorphic
foliation on $U\setminus\{0\}$ and is closed there, it is a
one-dimensional analytic subset of $U\setminus\{0\}$. Moreover,
\[
\dim L=1>\dim\{0\}=0.
\]
The Remmert--Stein extension theorem therefore implies that
\[
\overline L=L\cup\{0\}
\]
is a one-dimensional analytic subset of $U$. It is invariant under
$\F$ because its regular part is a leaf. Hence every irreducible
component of the germ $(\overline L,0)$ is a separatrix of $\F$.
If this germ is irreducible, then $\overline L$ itself is a
separatrix.
\end{proof}
We denote by $\operatorname{Diff}(\mathbb C,0)$ the group of germs at the
origin of holomorphic diffeomorphisms fixing the origin. An element
$h\in\operatorname{Diff}(\mathbb C,0)$ is \emph{periodic} if
$h^k=\operatorname{id}$ for some integer $k\geq1$. An important ingredient
in the Mattei--Moussu theorem is the following topological criterion for
periodicity.

\begin{defi}
    \label{Definition:finiteorbits}
Let $h\in\operatorname{Diff}(\mathbb C,0)$ be represented by
$h:U\to h(U)$, and let $V\subset U$. The $V$-pseudo-orbit of $x\in V$ is
\begin{align*}
\mathcal O_{V,h}(x)
={}&\{x\}\\
&\cup\{h^k(x):h^j(x)\in V\text{ for }j=1,\ldots,k\}\\
&\cup\{h^{-k}(x):h^{-j}(x)\in V\text{ for }j=1,\ldots,k\}.
\end{align*}

We say that $h$ has {\it finite pseudo-orbits} if there are neighborhoods
$0\in V\subset U$ such that $h$ and $h^{-1}$ are defined on $U$ and
$\#\mathcal O_{V,h}(x)<\infty$ for every $x\in V$.
\end{defi}

Th\'eor\`eme~2 on page~477 of \cite{MatteiMoussu1980} states that an
element $h\in\operatorname{Diff}(\mathbb C,0)$ is periodic if and only if
its pseudo-orbits are finite. The same argument gives the following
slightly more flexible form.

\begin{propo}
\label{Proposition:finiteorbits}
 Suppose that there is a
countable subset $A$ of a sufficiently small  disk $D(0;r)\subset \mathbb C$ such that every
point outside $A$ has finite pseudo-orbit under $h$. Then $h$ is periodic.
\end{propo}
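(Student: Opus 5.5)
We follow the Mattei--Moussu argument for Th\'eor\`eme~2, noting where countability enters. Write $h(z)=\lambda z+O(z^2)$. The proof has three steps: force $|\lambda|=1$, prove the iterates $h^n$ are uniformly bounded near $0$, then extract periodicity from a compact group of germs.

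\textbf{Step 1: $|\lambda|=1$.} Suppose $|\lambda|<1$; the case $|\lambda|>1$ is the same with $h^{-1}$. Then $0$ is an attracting fixed point. For $r$ small, every $x\in D(0;r)$ has $h^k(x)\in D(0;r)$ for all $k\geq1$, and $h^k(x)\to0$. The forward orbit of any $x\neq0$ is infinite, since a finite forward orbit would make $x$ periodic, and an attracting fixed point has no periodic points other than itself nearby. So every point of $D(0;r)\setminus\{0\}$ has an infinite pseudo-orbit. This set is uncountable, so it is not contained in $A$, a contradiction.

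\textbf{Step 2: uniform boundedness.} Fix small disks $V\Subset U$ with $h^{\pm1}$ defined on $U$. For $n\geq1$ set
\[
K_n=\{x\in\overline V:\ \#\mathcal O_{V,h}(x)\le n\}.
\]
Each $K_n$ is closed, because the condition ``$h^j(x)\in V$ for $j\le m$'' is open and its complement is closed. By hypothesis,
\[
\overline V=\bigcup_n K_n\cup A.
\]
Since $A$ is countable and the $K_n$ are countably many closed sets, Baire's theorem applied to the complete space $\overline V$ shows that some $K_n$ has nonempty interior. Here we use that singletons of $A$ are closed and nowhere dense, as $\overline V$ has no isolated points. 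This is the step where the countable set $A$ is harmless.

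On an open set $W\subset K_n$, each pseudo-orbit has at most $n$ points. Hence for each $x\in W$, the chain $h^j(x)$ leaves $V$ within $n$ steps or becomes periodic. Following Mattei--Moussu, we would like this for $W$ a neighborhood of $0$ rather than an arbitrary open set. I would instead consider the closed sets $\{x:\ \mathcal O_{V,h}(x)\subset \overline{D(0;\rho)}\}$. Using $|\lambda|=1$ and Baire again, we produce a disk around $0$ on which all $h^k$ that stay inside $V$ are defined and bounded. Then a Montel/normal-family argument shows that the orbit-closure of a small disk is an $h$-invariant compact set $K$ containing $0$ in its interior. Concretely, take the filled-in union of pseudo-orbits of a small disk inside $K_n$, and use the maximum principle to fill holes.

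\textbf{Step 3: periodicity.} Once a compact $h$-invariant neighborhood $K$ of $0$ exists on which all iterates $h^k$, $k\in\mathbb Z$, are defined and uniformly bounded, Montel's theorem makes $\{h^k\}$ a normal family. By Riemann mapping we can assume $\mathrm{int}\,K$ is simply connected. Then $h$ is conjugate to a rotation $z\mapsto\lambda z$ via the Koenigs-type linearization
\[
\phi=\lim\frac1N\sum_{k<N}\lambda^{-k}h^k .
\]
If $\lambda$ were not a root of unity, then every point near $0$ other than $0$ would have an infinite orbit lying in $V$, since rotation orbits are dense on circles. This contradicts finiteness off the countable set $A$, because circles are uncountable. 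Hence $\lambda^k=1$, and then $h^k$ is conjugate to the identity, so $h^k=\mathrm{id}$.

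\textbf{Main obstacle.} The hard part is Step 2: passing from ``some open set of points has bounded pseudo-orbits'' to a full invariant neighborhood of $0$. I would follow Mattei--Moussu's original argument line by line, inserting Baire wherever they use ``every point.''
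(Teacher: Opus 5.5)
The paper gives no argument of its own here. It only asserts that Mattei--Moussu's proof of their Th\'eor\`eme~2 goes through with a countable exceptional set, so your plan has the same overall shape, and Steps~1 and~3 are fine. Step~3 is the classical fact that a stable indifferent fixed point is linearizable, after which finiteness of orbits off $A$ forces $\lambda$ to be a root of unity.

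\textbf{The gap is Step~2, and it is not a technical detail.} Constructing a compact $h$-invariant neighbourhood of $0$ whose orbits stay in $V$ amounts to proving that $0$ is Lyapunov stable. Given Step~3, that is the entire content of the proposition, and your mechanism cannot deliver it:
\begin{enumerate}
\item Baire's theorem, applied either to the closed sets $K_n$ or to the closed sets $\{x:\mathcal O_{V,h}(x)\subset\overline{D(0;\rho)}\}$, only gives an interior point of one of them \emph{somewhere} in $V$. Nothing places that open set at $0$, or even near $0$.
\item If you could put $0$ in the interior of $\{x:\mathcal O_{V,h}(x)\subset\overline{D(0;\rho)}\}$ with $\rho$ slightly smaller than $r$, then points near $0$ could never leave $V$. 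That is exactly the stability you are trying to prove, so this route is circular.
\item The filled union of pseudo-orbits of a disk $W\subset K_n$ is not $h$-invariant. Points of $W$ are allowed to leave $V$ within $n$ steps, and for points away from $0$ that is typically why their pseudo-orbits are short.
\end{enumerate}
So the set $K$ that Step~3 needs is never constructed.

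\textbf{What is missing.} You need an argument that, when $0$ is not stable, produces uncountably many points outside $A$ whose whole orbit stays in $V$. In the non-linearizable case such points exist, but nothing forces them to fill an open set, so Baire over open subsets of $V$ cannot locate them. One way to supply this:
\begin{itemize}
\item Since $|\lambda|=1$, P\'erez-Marco's theorem on Siegel compacta gives, for small $r'<r$, a compact connected set $K$ with $0\in K\subset\overline{D(0;r')}$, $K\cap\partial D(0;r')\neq\emptyset$ and $h(K)=K$.
\item Each point of $K\setminus A$ has its full orbit in $K\subset V$, hence is periodic. Therefore $K=\bigcup_{p\geq1}\{x\in K:h^p(x)=x\}\cup(A\cap K)$.
\item $K$ is a nondegenerate continuum, so it has no isolated points. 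Baire's theorem on $K$ then gives some $p$ for which $\{x\in K:h^p(x)=x\}$ has an accumulation point.
\item $h^p$ is holomorphic on a connected neighbourhood of $K$, and that neighbourhood contains $0$. By the identity theorem, $h^p=\mathrm{id}$.
\end{itemize}
This is where the countability of $A$ is genuinely absorbed, and it makes Step~3 unnecessary. Without this ingredient, or Mattei--Moussu's argument actually written out, your proof stops at its decisive step.
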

Using this we obtain, for finitely generated groups of germs:

\begin{lema}\label{lemma:finite-holonomy}\normalfont Let $G<\operatorname{Diff}(\mathbb C, 0)$ be a finitely generated subgroup of germs of holomorphic diffeomorphisms.
 Suppose that there is a
countable subset $A$ of a sufficiently small  disk $D(0;r)\subset \mathbb C$ such that every
point outside $A$ has finite pseudo-orbit under $G$.
 Then $G$ is finite cyclic and analytically conjugate to
 $\langle z\mapsto e^{2\pi i/\nu}z\rangle$ for some $\nu\geq1$.
\end{lema}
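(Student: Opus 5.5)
Each generator $h_1,\dots,h_s$ of $G$ has the same exceptional property as $G$: its pseudo-orbits are contained in $G$-pseudo-orbits. So I first apply Proposition~\ref{Proposition:finiteorbits} to each generator separately, and conclude that every $h_i$ is periodic. The rest of the plan turns this into the statement about $G$.

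To make the comparison precise, shrink to a common disk $D(0;r)$ on which all $h_i^{\pm1}$ are defined. The $V$-pseudo-orbit $\mathcal O_{V,h_i}(x)$ is a subset of the $V$-pseudo-orbit of $x$ under $G$. By this I mean the set of points reachable from $x$ by finite words in the $h_j^{\pm1}$ whose intermediate points all stay in $V$. Hence $\#\mathcal O_{V,h_i}(x)<\infty$ for every $x\in V\setminus A$, and the proposition applies to each $h_i$.

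Next I show that $G$ is abelian, hence finite cyclic. A periodic germ $h$ of order $k$ is linearizable: $\phi=\frac1k\sum_{j=0}^{k-1}\lambda^{-j}h^j$, with $\lambda=h'(0)$, conjugates $h$ to $z\mapsto\lambda z$. The key point is that the derivative map $\rho\colon G\to\C^*$, $g\mapsto g'(0)$, is injective. Suppose $g\in\ker\rho$ with $g\neq\mathrm{id}$. Then $g$ is tangent to the identity, so $g(z)=z+az^{m+1}+\cdots$ with $a\neq0$. By the Leau--Fatou flower theorem, $g$ has attracting petals, and points in a petal have infinite forward $V$-pseudo-orbits. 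A petal is uncountable, so it meets the complement of $A$. This contradicts finiteness, because $g$-pseudo-orbits sit inside $G$-pseudo-orbits. Here I use that the finite-pseudo-orbit hypothesis for $G$ passes to every element $g\in G$, not only the generators: the forward iterates of $g$ are reached by repeating the word for $g$, and its intermediate steps can be kept inside $V$ after shrinking $V$ relative to $g$.

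Then $G\cong\rho(G)\subset\C^*$ is abelian. It is finitely generated and generated by elements of finite order, so it is finite. Being a finite subgroup of $\C^*$, it is cyclic, generated by some $h$ with $h'(0)=e^{2\pi i/\nu}$. Linearizing $h$ as above conjugates $G=\langle h\rangle$ to $\langle z\mapsto e^{2\pi i/\nu}z\rangle$.

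I expect the main obstacle to be the step of passing the finite-pseudo-orbit hypothesis from $G$ to an arbitrary element $g$, such as a commutator. Iterating a word requires its intermediate points to remain in $V$, and the domains of the words shrink. My first approach is to work with germs: for the flower argument it suffices to take $x$ in a small petal of $g$ near $0$. There the orbit converges to $0$, so every intermediate point of every iterate of the word lies in $V$. This yields infinitely many distinct points in the $G$-pseudo-orbit of any petal point outside $A$, which is the contradiction needed.
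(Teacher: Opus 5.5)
Your argument is correct and follows essentially the same route as the paper. Nontrivial elements tangent to the identity are excluded by the finite-pseudo-orbit hypothesis: the paper does this only for commutators, citing Camacho, while you apply the Leau--Fatou flower to the whole kernel of $g\mapsto g'(0)$. The generators are periodic by Proposition~\ref{Proposition:finiteorbits}, so $G$ is finite cyclic, and it is linearized by averaging.

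Your explicit check that the pseudo-orbit hypothesis passes from the generators to words such as commutators, using points in a small attracting petal, fills in a step the paper leaves implicit.
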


\begin{proof} There are maps $f_j$, $j=1,\ldots,r$, in $G$ such that
$G=\langle f_1,\ldots,f_r\rangle$.
\begin{claim} $G$ is abelian.
\end{claim}

Indeed, given two maps $h_{1},h_{2}\in G$, the commutator
$f=[h_{1},h_{2}]=h_{1}h_{2}h_{1}^{-1}h_{2}^{-1}$ belongs to $G$ and is
tangent to the identity: $f'(0)=1$. If $f\neq\operatorname{id}$, then
$f(z)=z+a_{k+1}z^{k+1}+\cdots$ for some $a_{k+1}\neq0$ and $k\geq1$.
This contradicts the fact that $f$ must have finite pseudo-orbits
(\cite{Camacho1978}). Therefore $f=\operatorname{id}$ and
$h_{1}h_{2}=h_{2}h_{1}$, i.e., $h_{1}$ and $h_{2}$ commute.

By Proposition~\ref{Proposition:finiteorbits}, each generator $f_j$ is
periodic. Since $G=\langle f_1,\ldots,f_r\rangle$ is abelian, it follows
that $G$ is finite.

It remains to linearize the action. Define
\[
\Phi(z):=\sum_{g\in G}\frac{g(z)}{g'(0)}.
\]
Then $\Phi(0)=0$ and $\Phi'(0)=|G|\neq0$, so
$\Phi\in\operatorname{Diff}(\mathbb C,0)$. For $g_0\in G$,
\[
\begin{aligned}
\Phi(g_0(z))
&=\sum_{g\in G}\frac{(g\circ g_0)(z)}{g'(0)}\\
&=g_0'(0)\sum_{g\in G}
  \frac{(g\circ g_0)(z)}{(g\circ g_0)'(0)}\\
&=g_0'(0)\Phi(z).
\end{aligned}
\]
Therefore
\[
\Phi\circ g_0\circ\Phi^{-1}(w)=g_0'(0)w.
\]
Thus $G$ is conjugate to a finite subgroup of $\mathbb C^*$, and every
finite subgroup of $\mathbb C^*$ is cyclic.
\end{proof}

\begin{lema}[Gluing along a compact exceptional tree]
\label{lemma:tree-gluing}\normalfont
Let $D=\bigcup_{j=1}^rD_j$ be a connected normal-crossings divisor in a
smooth complex surface, with compact irreducible components and finite tree
dual graph. Let $\mathcal G$ be a reduced holomorphic foliation leaving
$D$ invariant. Assume that every singular point of $\mathcal G$ on $D$
admits a nonconstant holomorphic first integral.

For each component put
$D_j^\circ=D_j\setminus\Sing(\mathcal G)$, choose a finite generating
system of $\pi_1(D_j^\circ)$, and transport to one fixed transversal to
$D_j$ both the corresponding holonomy maps and the invariance groups of
the local first integrals at the punctures. If the subgroup
$K_j<\Diff(\C,0)$ generated by all these maps is finite for every $j$,
then $\mathcal G$ admits a nonconstant holomorphic first integral in a
neighborhood of $D$.

Suppose, alternatively, that $\mathcal G$ admits a nonconstant formal first
integral $\widehat H$ along $D$, nonconstant on a generic transversal to
each component, and that at every singular point $p$ there are a primitive
holomorphic first integral $Q_p$ and a formal one-variable series
$\widehat\psi_p$ such that
\[
 \widehat H_p=\widehat\psi_p\circ Q_p.
\]
Then the holomorphic first integral may be chosen so that, near every
connected subtree on which it has been constructed,
\[
 \widehat H=\widehat\Phi\circ F
\]
for a nonconstant formal one-variable series $\widehat\Phi$.
\end{lema}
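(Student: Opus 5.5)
We will build the first integral component by component, inducting along the tree order of the subsection on the order in the exceptional divisor. Fix a root $D_0$ and a transversal $\Sigma_j\simeq(\C,0)$ at a regular point of each $D_j^\circ$.

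\textbf{Local construction over one component.} Since $K_j$ is finite, Lemma~\ref{lemma:finite-holonomy} (via the averaging map $\Phi$ from its proof) conjugates $K_j$ to $\langle z\mapsto e^{2\pi i/\nu_j}z\rangle$ on $\Sigma_j$. Then $h_j:=\Phi^{\nu_j}$ is a nonconstant holomorphic function on $\Sigma_j$ invariant under $K_j$. We extend $h_j$ along the leaves to a tubular neighbourhood of $D_j^\circ$. The extended function is well defined because it is invariant under all holonomy maps of $\pi_1(D_j^\circ)$, and it is holomorphic because the foliation is transverse to the disc fibres there. Near a puncture $p\in D_j\cap\Sing(\mathcal G)$ the local first integral $Q_p$ restricts to $\Sigma_j$ (after transport) as a function whose fibres are exactly the orbits of its invariance group. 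That group lies in $K_j$, so $h_j=\psi_p\circ Q_p$ for some holomorphic one-variable germ $\psi_p$, by the same invariant-function argument in one variable. Hence $h_j$ extends holomorphically across $p$ as $\psi_p\circ Q_p$, giving a first integral $F_j$ on a neighbourhood of all of $D_j$.

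\textbf{Gluing along edges.} If $D_i$ and $D_k$ meet at $p$, both $F_i$ and $F_k$ near $p$ are functions of the primitive $Q_p$, say $F_i=\alpha\circ Q_p$ and $F_k=\beta\circ Q_p$ with $\alpha,\beta$ nonconstant germs. Starting from $F_0$ on the root, we process components in increasing order. When $D_k$ is attached to the already built subtree $T$ at a single point $p$, we want to replace $F_T$ and $F_k$ by functions $a\circ F_T$ and $b\circ F_k$ that agree near $p$. We take a common "least common multiple" of $\alpha$ and $\beta$: both are conjugate to powers $z^{m}$ and $z^{n}$ after a coordinate change in $Q_p$, so with $N=\operatorname{lcm}(m,n)$ we can choose $a,b$ such that $a\circ\alpha=b\circ\beta$. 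Then $a\circ F_T$ and $b\circ F_k$ glue into a first integral on a neighbourhood of $T\cup D_k$.

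The tree hypothesis is used exactly here. Each new component meets the constructed part at one point, so no cycle imposes a second compatibility condition (with a cycle, the composed monodromy could obstruct the gluing). Finitely many steps give $F$ near $D$, nonconstant because it is nonconstant on $\Sigma_0$. Two remarks close this part:
\begin{itemize}
\item Post-composition by $a$ preserves the first-integral property on $T$.
\item Regluing never spoils earlier patches.
\end{itemize}

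\textbf{Formal alternative.} We carry the invariant $\widehat H=\widehat\Phi\circ F$ through the same induction. On $\Sigma_j$, the restriction of $\widehat H$ is a formal series invariant under $K_j$, since the holonomy preserves the leaves and hence $\widehat H$, and the puncture groups preserve $Q_p$. So it is a formal series in the generator $h_j$ of the invariant ring; this is formal invariant theory for the finite cyclic linear group, i.e. the series involves only powers $z^{\nu_j\ell}$. Therefore $\widehat H=\widehat\Phi_j\circ F_j$ near $D_j$. The relation $\widehat H_p=\widehat\psi_p\circ Q_p$ keeps this compatible at the junction points. At each gluing step $\widehat\Phi$ is replaced by $\widehat\Phi\circ a^{-1}$, understood formally via the relation through $Q_p$. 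This is consistent because $\widehat H$ is a single global object.

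\textbf{Main obstacle.} The delicate step is the extension across a puncture and the matching at corners. There we must check that the transported invariance group of $Q_p$ is really contained in $K_j$, and that $h_j$ factors through $Q_p$ holomorphically, not merely set-theoretically. I would first prove this by observing that the fibres of $h_j|_{\Sigma_j}$ are unions of fibres of $Q_p|_{\Sigma_j}$, and then applying the Riemann extension theorem to $h_j\circ Q_p^{-1}$ on the punctured disc.
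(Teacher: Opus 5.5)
Your construction is the paper's: the quotient coordinate of $K_j$, continued along $D_j^\circ$ and extended across the punctures through $Q_p$, followed by induction over the rooted tree using post-composition. It breaks at the corner, which is also where the paper's holomorphic argument is thin.

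\textbf{The corner step.} Near $p$ write $F_T=\alpha\circ Q_p$ and $F_k=\beta\circ Q_p$, with $\alpha(w)=\sigma_1(w)^m$, $\beta(w)=\sigma_2(w)^n$ and $\sigma_i\in\Diff(\C,0)$. A single change of the variable $w=Q_p$ can make only one of $\alpha,\beta$ a pure power, not both. Nonconstant $a,b$ with $a\circ\alpha=b\circ\beta$ exist if and only if the group generated by $G_\alpha=\sigma_1^{-1}\circ\mu_m\circ\sigma_1$ and $G_\beta=\sigma_2^{-1}\circ\mu_n\circ\sigma_2$ is finite. If such $a,b$ exist, the common function is invariant under that group, so Lemma~\ref{lemma:6} forces it to be finite. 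Conversely, the quotient coordinate of a finite group factors through both $\alpha$ and $\beta$.

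This finiteness can fail. For $\alpha(w)=w^2$ and $\beta(w)=(w+w^2)^2$, the involutions $w\mapsto-w$ and $w\mapsto\sigma_2^{-1}(-\sigma_2(w))=-w-2w^2+\cdots$ compose to $w+2w^2+\cdots$, which has infinite order. So no $a,b$ exist.

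Finiteness of the individual $K_j$ does not exclude this. On the leaf-space coordinate $Q_p$, $G_\alpha$ is induced by the old side and $G_\beta$ by $K_k$ modulo the local holonomy at $p$. Regluing the two foliated neighbourhoods at $p$ by $(x,y)\mapsto(x\,\phi(Q_p),\,y\,\psi(Q_p))$ conjugates $G_\beta$ by an arbitrary element of $\Diff(\C,0)$ and changes no $K_j$. Any first integral near $D$ has the form $\Theta\circ Q_p$ near $p$ with $\Theta$ invariant under both groups. Hence a generic regluing admits no first integral at all.

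A concrete instance: take two copies of a neighbourhood of the $(-2)$-curve over $\C^2/\{\pm1\}$, foliated by the invariant polynomial $xy(x-y)^2$. Each copy has $K_j$ of order $2$ and local integrals $u^2s,\,u^2s,\,us$ along $\{u=0\}$. Plumb the two copies at their $us$-points with twist $c\mapsto c+c^2$. This produces exactly the pair $w^2$, $(w+w^2)^2$ above, on an $A_2$ chain. So the first assertion does not follow from the stated hypotheses, and no cleverer normalization will rescue your step.

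\textbf{What is missing.} At each attaching corner, the group generated by the invariance group of the integral already built and the transported holonomy of the new component must be finite. This has to come from a global source.
\begin{itemize}
\item In Theorem~A it comes from condition~(ii) via Lemma~\ref{lemma:finite-holonomy}, since the orbits of these maps lie on leaves near $D$.
\item In the formal clause it comes from $\widehat H$. Both groups preserve $\widehat\psi_p$, whose analytic invariance group is finite by Lemma~\ref{lemma:6}.
\end{itemize}

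\textbf{Comparison with the paper.} The paper's holomorphic gluing (``extend at the corner as $Q_p^{a}$ and $Q_p^{b}$'') makes the same unjustified simultaneous normalization as you do. Its formal clause, however, quotients by the maximal invariance groups $G(\widehat h)$ and $G(\widehat\psi_p)$. This makes both sides of a corner coincide with $R_p\circ Q_p$ up to reparametrization, and that is the correct repair.

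Your formal paragraph lacks this repair. You quotient only by $K_j$, and ``replace $\widehat\Phi$ by $\widehat\Phi\circ a^{-1}$'' is meaningless when $a$ is a power map. What you need is that $\widehat\Phi$ factors through $a$, and that is exactly the maximality statement.
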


\begin{proof}
For one component $D_j$, choose a coordinate $z$ on the fixed transversal
which linearizes the finite group $K_j$ and put $P_j(z)=z^{|K_j|}$.
Invariance under the holonomy generators makes the usual holonomy
continuation of $P_j$ single-valued along $D_j^\circ$. At a puncture $p$,
let $Q_p$ be a primitive local first integral. Since $K_j$ contains the
transported invariance group of $Q_p|_\Sigma$, the argument of
\cite[Chapter~V, \S1, Lemmas~2 and~3]{MatteiMoussu1980} gives
\[
 P_j|_\Sigma=\varphi_p\circ Q_p|_\Sigma
\]
for a holomorphic one-variable germ $\varphi_p$. Thus
$\varphi_p\circ Q_p$ extends the continued function across $p$. This proves
the assertion near one component. Notice that the handle generators of a
positive-genus component have been included explicitly; this is the only
addition to the proof for an exceptional $\mathbf P^1$.

Root the dual tree. Suppose an integral has been constructed near a
connected rooted subtree and attach an adjacent component $D_j$ at its
unique corner $p$. Let $Q_p$ be a primitive local first integral at $p$.
After changing coordinates on the two transversals, its restrictions are
monomials. The quotient coordinates obtained from the finite groups on the
old and new sides therefore extend at the corner as
\[
 Q_p^{a}\qquad\text{and}\qquad Q_p^{b}
\]
for some positive integers $a,b$. Put $c=\operatorname{lcm}(a,b)$.
Replacing the old integral by its $c/a$-th power and the new integral by
its $c/b$-th power makes both equal to $Q_p^c$ near $p$. These replacements
are nonconstant one-variable compositions, so they remain first integrals.
They also preserve all equalities already established on the rooted
subtree, because the same composition is applied to the single integral
defined there. The two functions now glue across the corner. Induction over
the finite tree gives a nonconstant holomorphic first integral near all of
$D$.

For the formal assertion, let $\widehat h$ be the restriction of
$\widehat H$ to a generic transversal to the component currently being
attached and put
\[
 G(\widehat h)=
 \{g\in\Diff(\C,0):\widehat h\circ g=\widehat h\}.
\]
Lemma~\ref{lemma:6} shows that this analytic invariance group is finite.
After linearizing it, write its order as $m$ and take the quotient
coordinate $P(z)=z^m$. The one-variable formal invariance theorem
\cite[Chapter~I, Proposition~1.2]{MatteiMoussu1980} gives a formal series
$\widehat\Phi$ such that
\[
 \widehat h=\widehat\Phi\circ P.
\]
All component holonomies and all transported puncture groups preserve
$\widehat h$, so they lie in $G(\widehat h)$ and $P$ extends over the
component by the preceding construction.

At a corner $p$, write the primitive local integral as $Q_p$ and use the
hypothesis
\[
\widehat H_p=\widehat\psi_p\circ Q_p.
\]
Apply Lemma~\ref{lemma:6} to the one-variable formal germ
$\widehat\psi_p$ and let $R_p$ be the holomorphic quotient coordinate of
its finite analytic invariance group. The one-variable formal invariance
theorem gives
\[
 \widehat\psi_p=\widehat\chi_p\circ R_p
\]
for a formal series $\widehat\chi_p$. Consequently,
\[
 F_p:=R_p\circ Q_p
\]
is a holomorphic first integral defined on the whole corner chart and
$\widehat H_p=\widehat\chi_p\circ F_p$. On either transverse side of the
corner, $F_p$ is the maximal holomorphic quotient through which the
corresponding restriction of $\widehat H$ factors. The quotient coordinate
constructed along the adjacent component has the same maximality property;
the one-variable uniqueness statement
\cite[Chapter~I, Proposition~1.2]{MatteiMoussu1980} therefore identifies it
with $F_p$ up to an invertible holomorphic reparametrization. Choosing this
reparametrization makes the quotient coordinates on the two sides agree
with $F_p$. Hence
\[
\widehat H=\widehat\Phi\circ F
\]
continues to hold on the enlarged neighborhood. Because a new component
meets the previously constructed divisor at a unique corner, no additional
compatibility condition arises. Induction over the tree proves the formal
assertion.
\end{proof}

\subsection{Proof of Theorem A}

We first prove \textup{(i)$\Rightarrow$(ii)}. Let
$f\in\OO_{X,0}$ be a nonconstant holomorphic first integral and choose a
sufficiently small representative. Every leaf in $U\setminus\{0\}$ is a
connected component of the regular part of a fiber of $f$, and is therefore
closed in $U\setminus\{0\}$. A leaf adhering to the origin is contained in
$f^{-1}(f(0))$. This analytic curve germ has finitely many irreducible
components, and the regular part of each component has finitely many
connected components in a sufficiently small representative. Hence only
finitely many leaves adhere to $0$.

We now prove \textup{(ii)$\Rightarrow$(i)}.
Assume condition \textup{(ii)} of Theorem~A. Since the leaves are closed in $U\setminus\{0\}$ and only finitely many leaves adhere to $0$, the preceding observation shows that $\F$ has only finitely many separatrices. Hence $\F$ is non-dicritical.

Let $E_j\subset E$ be a component of the exceptional divisor $E=\pi^{-1}(0)$, where
$\pi\colon\widetilde X\to X$ is a minimal resolution of the pair $((X,0),\F)$. By the preceding lemma, every component $E_j$ of $E=\displaystyle\bigcup_{j=1}^{r}E_j$ is invariant by the lifted foliation $\widetilde\F=\pi^*(\F)$. We fix a regular point $q_j\in E_j\setminus\Sing(\widetilde\F)$,
which is not a corner, and a transverse disk $\Sigma_j\transv E_j$ centered at $q_j$.
\begin{figure}[ht!]
    \centering
    \includegraphics[width=0.5\linewidth]{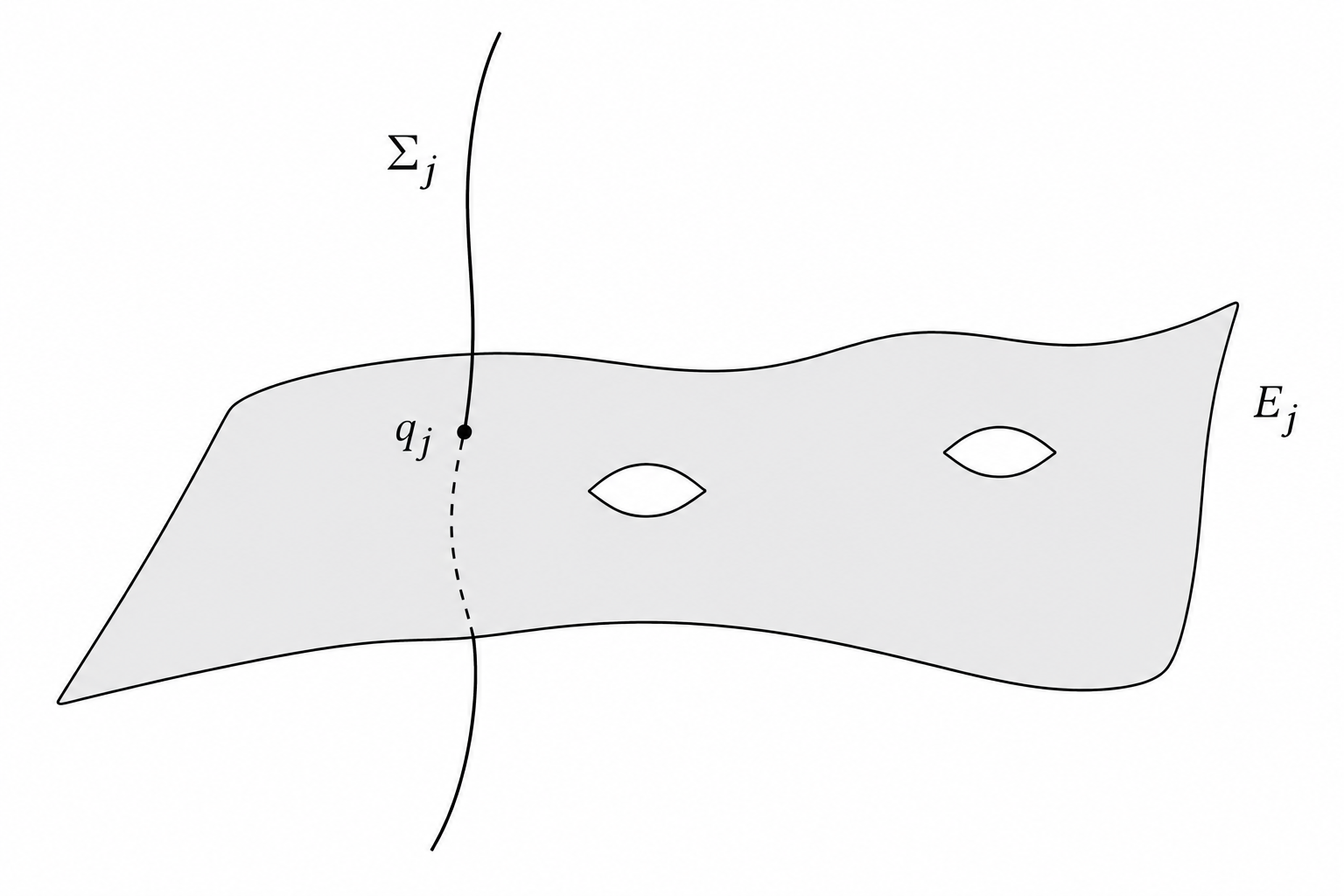}
    \caption{A transverse disk $\Sigma_j$ through a regular point $q_j$ of
    the exceptional component $E_j$.}
    \label{fig:transversal-Ej}
\end{figure}
We consider the holonomy group
\[
H(E_j):=\Hol(\widetilde\F,L_j,\Sigma_j,q_j)
\subset\Diff(\Sigma_j,q_j)\simeq\Diff(\C,0)
\]
of the leaf
\[
L_j:=E_j\setminus\bigl(E_j\cap\Sing(\widetilde\F)\bigr).
\]

By condition \textup{(ii)} of Theorem~A, the leaves of $\F$ are closed in
$U\setminus\{0\}$, and only finitely many leaves adhere to $0$.

Let $\mathscr A$ denote the finite family of leaves that adhere to $0$. Choose
\[
x\in\Sigma_j\setminus\{q_j\}
\]
such that the corresponding leaf $L_x$ does not belong to $\mathscr A$. Then
\[
0\notin\overline{L_x}.
\]
Consequently, if $\widetilde L_x$ denotes the lift of $L_x$ to $\widetilde X$, then
\[
\overline{\widetilde L_x}\cap E=\varnothing.
\]
After replacing $\Sigma_j$ by a relatively compact transverse disk, the intersection $\widetilde L_x\cap\Sigma_j$ is finite. Hence the pseudo-orbit of $x$ under the holonomy group $H(E_j)$ is finite.
The intersections with $\Sigma_j$ of the finitely many leaves in
$\mathscr A$ form a countable subset. For every point outside this subset,
the preceding argument gives a finite pseudo-orbit. The leaf
$L_j=E_j\setminus\Sing(\widetilde\F)$ is a compact Riemann surface with
finitely many punctures, so its fundamental group, and hence its holonomy
group $H(E_j)$, is finitely generated. Lemma~\ref{lemma:finite-holonomy}
therefore shows that $H(E_j)$ is finite cyclic and analytically linearizable
as a group of rotations:
\[
H(E_j)\simeq\left\langle z\mapsto e^{\frac{2\pi i}{\nu_j}}z\right\rangle
\]
for some $\nu_j\in\N$.

\begin{lema}
\label{Lemma:5}\normalfont
Assume condition \textup{(ii)} of Theorem~A. Then $\F$ is
non-dicritical, as proved above. With the preceding notation, there is a
neighborhood $A_j$ of $L_j:=E_j\setminus\bigl(E_j\cap\Sing(\widetilde\F)\bigr)$
in $\widetilde X$ where $\widetilde\F$ admits a holomorphic first integral
$F_j\colon A_j\to\C$. Moreover, because the dual graph of $E$ is a finite
tree, these local first integrals may be reparametrized and glued to a
nonconstant holomorphic first integral $F$ in a neighborhood of the entire
exceptional divisor.
\end{lema}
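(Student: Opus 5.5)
The plan is to obtain $F$ as a direct application of the gluing Lemma~\ref{lemma:tree-gluing} with $D=E$ and $\mathcal G=\widetilde\F$. Its hypotheses are already almost all in place. $E$ is a connected normal-crossings divisor with compact components, and its dual graph is a finite tree by property (vi) of the resolution of the pair, since $(X,0)$ is normal-tree. $\widetilde\F$ is reduced, and it leaves every $E_j$ invariant because $\F$ is non-dicritical. It therefore remains to check two things. First, every singular point $p\in\Sing(\widetilde\F)\cap E$ must admit a nonconstant holomorphic first integral. Second, for each $j$, the group $K_j$ generated by the holonomy of $L_j$ together with the transported invariance groups of the local first integrals at the punctures must be finite.

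For the local first integrals at singular points, I would use the finiteness of the holonomies $H(E_j)$ established just above, so that each $H(E_j)$ is a finite cyclic rotation group. Take $p\in E_j\cap\Sing(\widetilde\F)$.

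\emph{Corner case.} If $p$ is a corner $E_j\cap E_k$, the local holonomy of the separatrix $E_j$ around $p$ is a loop in $\pi_1(L_j)$, hence periodic. A reduced singularity whose separatrix holonomy is periodic is linearizable with eigenvalue ratio in $\Q_{<0}$. This is the Mattei--Moussu argument, which is local in the smooth surface $\widetilde X$. It yields a monomial first integral $x^ay^b$, $a,b\in\N$.

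\emph{Non-corner case.} If $p$ is not a corner, the second separatrix at $p$ is transverse to $E$ and projects to a separatrix of $\F$. The local holonomy of $E_j$ at $p$ is again periodic. A saddle-node with strong manifold $E_j$ would have holonomy tangent to the identity and nontrivial; if $E_j$ were the weak manifold, the leaves near it would accumulate. Both contradict (ii.a)–(ii.b) through the finite-pseudo-orbit criterion. So we are again in the linearizable, rational negative ratio case. I would record the primitive integral $Q_p$ obtained in this way.

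For the finiteness of $K_j$: the invariance group of $Q_p|_{\Sigma}$ for $Q_p=x^ay^b$ is the cyclic rotation group generated by the local holonomy at $p$, which already lies in $H(E_j)$. Hence $K_j=H(E_j)$, which is finite. The one-component construction in Lemma~\ref{lemma:tree-gluing} then provides $F_j$ on a neighborhood $A_j$ of $L_j$: take $P_j=z^{\nu_j}$ in the linearizing coordinate on $\Sigma_j$ and continue it along leaves. The gluing part of the same lemma then gives the global $F$ near $E$. The adjustment at each corner is by powers $Q_p^{c/a}$, and because the dual graph is a tree, no cycle produces a monodromy obstruction. The main obstacle I expect is the local analysis at the singular points, namely excluding saddle-nodes and irrational or positive ratios. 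This must be done carefully using closedness of leaves and the countable exceptional set of Proposition~\ref{Proposition:finiteorbits}, since transversals near $p$ meet the finitely many adherent leaves only in countably many points.
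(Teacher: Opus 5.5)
Your proposal is correct and follows essentially the paper's route: exclude saddle-nodes, use periodicity of the separatrix holonomy to get a negative rational eigenvalue ratio and Mattei--Moussu linearization, then feed the primitive monomial integrals $Q_p$ and the finite groups $K_j$ into Lemma~\ref{lemma:tree-gluing}. One point in your corner case needs tightening: at a corner $E_j\cap E_k$ you must invoke finiteness of both $H(E_j)$ and $H(E_k)$ to rule out a saddle-node, since the strong manifold may be $E_k$ and the weak holonomy alone can be periodic. The only genuine variation is your finiteness argument for $K_j$. You observe that, by coprimality of the exponents of $Q_p$, the invariance group of $Q_p|_\Sigma$ is exactly the cyclic group generated by the local holonomy around $p$, so $K_j=H(E_j)$; the paper instead re-runs the finite-pseudo-orbit argument and Lemma~\ref{lemma:finite-holonomy} directly on $\mathcal K_j$, and your version is slightly more economical.
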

\begin{figure}[ht!]
    \centering
    \includegraphics[width=0.5\linewidth]{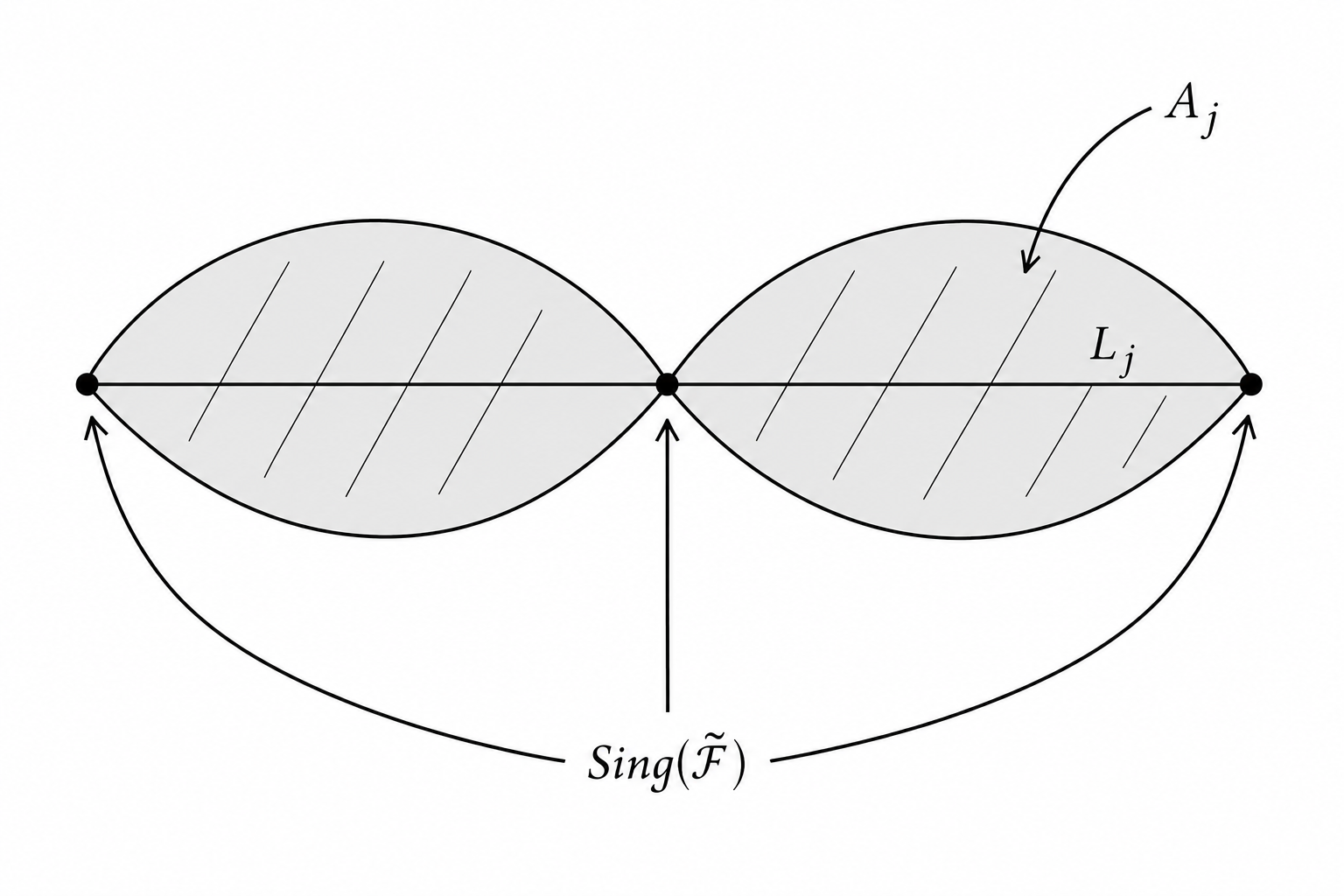}
    \caption{A neighborhood $A_j$ of
    $L_j=E_j\setminus\Sing(\widetilde\F)$, covered by local flow boxes.}
    \label{fig:neighborhood-Lj}
\end{figure}

\begin{proof}
We first determine the singularities on $E$.

\begin{claim}\label{claim1}\normalfont
$p_j$ is not a saddle-node.
\end{claim}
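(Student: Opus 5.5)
The plan is to argue by contradiction. Suppose some singular point $p_j\in E$ of $\widetilde\F$ is a saddle-node. Under the reduced-singularity normal form we can pick local coordinates $(x,y)$ at $p_j$ with
\[
\omega=x^{k+1}\,dy-y(1+\lambda x^k)\,dx+\cdots,\qquad k\geq1 .
\]
Here $\{x=0\}$ is the strong separatrix and $\{y=0\}$ is the (possibly formal) weak separatrix.

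Since $\F$ is non-dicritical, every component of $E$ is invariant. Hence at least one of the two separatrices at $p_j$ lies in $E$; at a corner, both do.

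The key tool is the holonomy of the strong separatrix. On a transversal $\{y=y_0\}$ to $\{x=0\}$, the holonomy of the loop around $p_j$ is tangent to the identity and nontrivial: it has the form
\[
h(x)=x+c\,x^{k+1}+\cdots,\qquad c\neq0 .
\]
By the Leau--Fatou flower theorem, there is therefore an open petal of points near $0$ whose forward pseudo-orbits are infinite. This is exactly the fact already used in the proof of Lemma~\ref{lemma:finite-holonomy}, citing \cite{Camacho1978}.

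Case 1: the strong separatrix is contained in a component $E_i$ of $E$. Then $h$ belongs to the holonomy group $H(E_i)$. We showed above that $H(E_i)$ is finite cyclic, so $h$ would be periodic. But a periodic germ tangent to the identity is the identity, which contradicts $c\neq0$.

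Case 2: the strong separatrix is not contained in $E$. Then $\{x=0\}$ projects under $\pi$ to a separatrix $S$ of $\F$. The weak direction $\{y=0\}$ lies in $E$, say in $E_i$, and the holonomy of $E_i$ around $p_j$ is the weak holonomy. This weak holonomy is again tangent to the identity and is either nontrivial or the identity. If it is nontrivial, we argue as in Case 1 using the finiteness of $H(E_i)$. If it is the identity, we use instead the closed-leaf hypothesis (ii.a) near $S$. Leaves in the sectors of the saddle-node accumulate onto the strong separatrix: the flower dynamics of $h$ along $\{x=0\}$ produce leaves crossing a transversal to $S$ at a point of $U\setminus\{0\}$ infinitely often, accumulating on $S$. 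Such leaves are not closed in $U\setminus\{0\}$ and do not belong to the finite family $\mathscr A$. Points of the petal form an uncountable set, so they cannot all lie in the countable exceptional set, and this contradicts (ii).

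The main obstacle is Case 2. There one must transfer the saddle-node's non-finite pseudo-orbits from the strong separatrix, which is not in $E$, into a contradiction with (ii.a) for leaves of $\F$ in $U\setminus\{0\}$. The essential point is that the transversal to $S$ sits at a point away from $E$, where $\pi$ is a biholomorphism. Infinite pseudo-orbits there therefore give a leaf $L$ with infinitely many intersections with a compact transverse disk away from $0$, so $L$ is not closed in $U\setminus\{0\}$.
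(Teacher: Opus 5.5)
Your argument is essentially the paper's. If the strong separatrix lies in a component $E_i$, finiteness of $H(E_i)$ forces the nontrivial parabolic holonomy $h$ to be periodic, hence trivial. If it does not, a Leau--Fatou petal on a transversal to the strong separatrix at a point off $E$, where $\pi$ is biholomorphic, contradicts condition (ii).

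There is one false statement in your Case 2, but it is dispensable. The weak holonomy is not in general tangent to the identity: for the model $x^{k+1}\,dy-y(1+\lambda x^k)\,dx$ along $\{y=0\}$ it is $y\mapsto e^{2\pi i\lambda}y$. It can therefore be a nontrivial periodic germ, which is perfectly compatible with finiteness of $H(E_i)$, so your sub-case split omits a case. This does no harm, because your petal argument uses nothing about the weak holonomy. Drop the sub-cases and apply the petal argument directly to all of Case 2, as the paper does.

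The paper concludes via (ii.b): the uncountably many leaves through the petal accumulate on the strong separatrix, hence on $p_j\in E$, hence adhere to $0$. Your use of (ii.a) is equally valid, since a leaf meeting a compact transversal inside $U\setminus\{0\}$ infinitely often is not closed there. However, your aside that these leaves do not belong to $\mathscr A$ is wrong: they do adhere to the origin, and that is precisely the paper's contradiction.
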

\begin{proof}[Proof of the Claim 1]
Suppose that $p_j$ is a saddle-node. Denote by $S(p_j)$ its strong manifold.
The local holonomy map of this separatrix is a non-trivial map tangent to the identity of the form
\[
h(z)=z+a_{k+1}z^{k+1}+\cdots,
\qquad a_{k+1}\neq0.
\]
If $S(p_j)\subset E_i$ for some $i$, then, arguing as we did for $E_j$, we conclude that $H(E_i)$ is finite and therefore $h$ must be periodic, a contradiction.
If $S(p_j)\not\subset E$, then $S(p_j)$ is the strict transform of a
separatrix through the origin.  The holonomy of the strong separatrix of a
saddle-node is a nonperiodic parabolic germ.  By the Leau--Fatou flower
theorem, every sufficiently small punctured transversal contains an open
petal on which an infinite forward or backward orbit remains defined and
converges to the fixed point. As the initial point varies in the petal,
this produces infinitely many distinct leaves accumulating on $E$ and,
after blowing down, adhering to the origin. This contradicts
condition~\textup{(ii.b)}. Hence this case is
also impossible, and the claim follows.
\end{proof}

\begin{claim}\label{claim2}\normalfont
The singularity $p_j$ is analytically linearizable.
\end{claim}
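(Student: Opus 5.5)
We show that every singular point $p_j\in E$ of $\widetilde\F$ is analytically linearizable. By the properties of the resolution of the pair $((X,0),\F)$, $p_j$ is a simple (reduced) singularity, and Claim~\ref{claim1} excludes saddle-nodes. So in local coordinates $(x,y)$ the foliation is given by a vector field with linear part $\lambda_1 x\,\partial_x+\lambda_2 y\,\partial_y$, where $\lambda_1\lambda_2\neq0$ and $\lambda=\lambda_2/\lambda_1\notin\Q_{>0}$. The two local separatrices are the coordinate axes. Since $\F$ is non-dicritical, at least one axis is contained in a component $E_i$ of $E$, say $\{y=0\}\subset E_i$. The other axis lies either in another component $E_k$ (when $p_j$ is a corner) or is the strict transform of a separatrix of $\F$.

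The key step is that the local holonomy of the separatrix $\{y=0\}$ at $p_j$ is periodic. This local holonomy is $h(z)=e^{2\pi i\lambda}z+\cdots$ on a transversal $\{x=x_0\}$, and it is the holonomy of a small loop in $L_i=E_i\setminus\Sing(\widetilde\F)$ around the puncture $p_j$. Transported to $\Sigma_i$, it is conjugate to an element of $H(E_i)$, which was shown above to be finite cyclic. Hence $h$ is periodic, say $h^{\nu}=\mathrm{id}$. This forces $e^{2\pi i\lambda\nu}=1$, so $\lambda\in\Q$, and since $\lambda\notin\Q_{>0}$ we get $\lambda=-m/n\in\Q_{<0}$. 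We thus have a resonant saddle whose holonomy is periodic, hence analytically linearizable.

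The expected main obstacle is passing from linearizable holonomy to a linearizable foliation. We invoke the Mattei--Moussu result \cite{MatteiMoussu1980} that a germ of reduced saddle in $(\C^2,0)$ whose separatrix has periodic (equivalently, finite-order, linearizable) holonomy admits a holomorphic first integral $x^ny^m\cdot u$ and is analytically conjugate to its linear part $n\,y\,dx+m\,x\,dy=0$. This is the standard fact that the holonomy of one separatrix determines the analytic class of a non-degenerate saddle. Concretely, choose a coordinate $z$ linearizing $h$ to $z\mapsto e^{-2\pi i m/n}z$. The function $z^n$ is then $h$-invariant, so it extends by holonomy transport along the leaves to a holomorphic first integral on a neighborhood of $\{y=0\}$ minus the origin. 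It is bounded near $p_j$, so the Riemann extension theorem extends it across $p_j$. This first integral has the form $x^ny^m$ times a unit, and a fibered change of coordinates then linearizes the foliation. The same argument can be run from the second axis if it lies in $E$; if it does not, the first axis suffices.

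Finally, we must check that the possibility $\lambda\in\Q_{<0}$ with non-periodic holonomy (a non-linearizable resonant saddle) does not arise. It is excluded precisely because $h$ belongs, up to conjugation, to the finite group $H(E_i)$, so the argument is uniform in whether $p_j$ is a corner or a trace point of a separatrix. This yields the claim, and it also supplies the local holomorphic first integrals $Q_{p_j}$ needed in Lemma~\ref{lemma:tree-gluing}.
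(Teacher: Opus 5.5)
Your argument is correct and follows the paper's proof: finiteness of $H(E_i)$ makes the separatrix holonomy periodic, reducedness then forces $\lambda\in\Q_{<0}$, and the Mattei--Moussu theorem linearizes a resonant saddle whose separatrix holonomy is periodic. Two cosmetic points: with $\lambda=-m/n$ and $z^n$ invariant on the transversal to $\{y=0\}$, the first integral is $x^m y^n u$ (with linear form $m\,y\,dx+n\,x\,dy$), not $x^n y^m u$; and in your sketch you must use the saddle dynamics to show that the saturation of the transversal fills a punctured neighborhood of $p_j$ off the other separatrix before applying Riemann extension.
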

\begin{proof}[Proof of Claim 2]
By the reduction theorem and Claim~\ref{claim1}, $p_j$ is a reduced
nondegenerate singularity. Let $\lambda_j$ denote its eigenvalue
quotient, with the convention that the multiplier of the holonomy
$h_j$ of the separatrix $E_j$ is
\[
h_j'(0)=e^{2\pi\sqrt{-1}\lambda_j}.
\]
Since $h_j$ is conjugate to an element of the finite group $H(E_j)$,
it is periodic. Hence $h_j'(0)$ is a root of unity and
$\lambda_j\in\Q$. Since $p_j$ is reduced, its eigenvalue quotient
cannot be a positive rational number. Therefore
\[
\lambda_j=-\frac{\ell_j}{k_j},
\qquad
k_j,\ell_j\in\N,\qquad
\gcd(k_j,\ell_j)=1.
\]
By the classical holonomy linearization theorem for reduced
nondegenerate singularities (see \cite{MatteiMoussu1980}), a
singularity with negative rational eigenvalue quotient and periodic
separatrix holonomy is analytically linearizable. Consequently, there
are holomorphic coordinates $(x_j,y_j)$ centered at $p_j$ on a
neighborhood $B_j$ of $p_j$ in which
\[
\widetilde\F:
k_jx_j\,dy_j+\ell_jy_j\,dx_j=0.
\]
We may choose the coordinates so that
\[
E_j\cap B_j=\{y_j=0\}.
\]
If $p_j$ is a corner belonging to $E_i\cap E_j$, then the other
component is given by
\[
E_i\cap B_j=\{x_j=0\}.
\]
The function
\[
F_{p_j}(x_j,y_j)=x_j^{\ell_j}y_j^{k_j}
\]
is therefore a holomorphic first integral of $\widetilde\F$ in $B_j$.
\end{proof}

At each singular point $p\in E$, Claim~\ref{claim2} supplies a primitive
local first integral $Q_p=x^{\ell_p}y^{k_p}$. For every component $E_j$,
choose generators of
$\pi_1(E_j\setminus\Sing(\widetilde\F))$ and transport to a fixed
transversal both their holonomy maps and the finite invariance groups of
the restrictions of the $Q_p$ at the punctures. These maps generate a
finitely generated subgroup
\[
 \mathcal K_j<\Diff(\C,0).
\]
Every element of $\mathcal K_j$ sends a point to another point on the same
leaf.  Apart from the countable intersections with the finitely many leaves
that adhere to the origin, condition~\textup{(ii)} therefore gives finite
pseudo-orbits. Lemma~\ref{lemma:finite-holonomy} implies that
$\mathcal K_j$ is finite for every $j$. Lemma~\ref{lemma:tree-gluing}
therefore gives a single nonconstant holomorphic first integral $F$ on a
neighborhood $V$ of all of $E$.
\end{proof}

Let $F$ be the first integral in a neighborhood $V$ of $E$ obtained in
Lemma~\ref{Lemma:5}. The exceptional fiber of a resolution of a normal surface germ is
connected. Since every component of $E$ is invariant, $F$ is constant on the
regular part of each component; continuity at the crossing points shows that
$F$ has one constant value on all of $E$. Subtracting this value, we may
assume that $F|_E=0$. After shrinking $V$, properness of $\pi$ gives a
neighborhood $U_0$ of $0$ such that $\pi^{-1}(U_0)\subset V$. Because $\pi$
is biholomorphic away from $E$, the function
\[
 f:=F\circ\pi^{-1}
\]
is a holomorphic first integral on $U_0\setminus\{0\}$. The function $F$ is
bounded near the compact divisor $E$, so $f$ is locally bounded near $0$.
By the Riemann extension theorem for normal spaces,
$f$ extends uniquely to a holomorphic germ in $\OO_{X,0}$. It is
nonconstant because $F$ is nonconstant. This completes the proof of
Theorem~\ref{thm:A}.

\begin{exe}\label{exem2}
The normality hypothesis in Theorem~\ref{thm:A} is essential.  Consider
the reduced nonnormal surface
\[
 X=\{xy=0\}\subset(\C^3,0),
 \qquad X_{\sing}=\{x=y=0\},
\]
with irreducible components $X_1=\{x=0\}$ and $X_2=\{y=0\}$.  Define
\[
 V=x(2z+3z^2)\frac{\partial}{\partial x}
   +2zy\frac{\partial}{\partial y}
   -(x+y)\frac{\partial}{\partial z}.
\]
Since
\[
 V(xy)=xy(4z+3z^2),
\]
$V$ induces a holomorphic foliation $\F$ on $X$.

On $X_1$, with coordinates $(y,z)$, we have
\[
 V|_{X_1}=y\left(2z\frac{\partial}{\partial y}
                  -\frac{\partial}{\partial z}\right),
 \qquad F_1(y,z)=y+z^2,
\]
and $V(F_1)=0$.  On $X_2$, with coordinates $(x,z)$,
\[
 V|_{X_2}=x\left((2z+3z^2)\frac{\partial}{\partial x}
                  -\frac{\partial}{\partial z}\right),
 \qquad F_2(x,z)=x+z^2+z^3,
\]
and $V(F_2)=0$.  The factors $y$ and $x$ do not vanish on the respective
parts of $X_{\reg}$. Hence the leaves on each connected component of
$X_{\reg}$ are the connected components of the level sets of $F_1$ and
$F_2$, and are closed there.

A leaf can adhere to the origin only when it belongs to the zero level of
$F_1$ or $F_2$.  Thus precisely the two leaves
\[
 \{y+z^2=0\}\cap X_{\reg},
 \qquad
 \{x+z^2+z^3=0\}\cap X_{\reg}
\]
adhere to the origin.

We finally show that $\F$ has no nonconstant holomorphic first integral on
$X$.  A germ $f\in\mathcal O_{X,0}$ is a pair
\[
 (f_1(y,z),f_2(x,z))
\]
satisfying $f_1(0,z)=f_2(0,z)$. The pairs $(F_1,z)$ and $(F_2,z)$ are local
coordinate systems on the two smooth components. Since $f_1$ and $f_2$ are
constant along the respective level curves of $F_1$ and $F_2$, there are
one-variable holomorphic germs $\varphi$ and $\psi$ such that
\[
 f_1=\varphi(F_1),\qquad f_2=\psi(F_2).
\]
Compatibility on the singular axis becomes
\[
 \varphi(z^2)=\psi(z^2+z^3).
\]
The left-hand side is even, and therefore
\[
 \psi(z^2+z^3)=\psi(z^2-z^3).
\]
If the first nonconstant term of $\psi$ is $a_kt^k$, then, up to sign, the
first nonzero term of the difference is $2ka_kz^{2k+1}$, a contradiction.
Thus $\psi$, and then $\varphi$, is constant.  Consequently, both
dynamical conditions of Theorem~\ref{thm:A} hold, but no nonconstant
holomorphic first integral exists.  This proves genuine sharpness of the
normality hypothesis.
\end{exe}

\subsection{Existence of holomorphic first integrals in codimension one}

In this section we prove a natural extension of Theorem~\ref{thm:A} to
germs of codimension-one holomorphic foliations.
Recall from the introduction that
a germ $(X,0)\subset(\C^m,0)$ of a complex analytic variety of dimension
$n\geq2$ is \textit{normal-tree} if $(X,0)$ is normal and a nonempty
Zariski-open family of its
Bertini-type two-dimensional linear sections consists of normal-tree
analytic surfaces. Our framework is a normal-tree analytic germ $(X,0)$ of
dimension $n\geq2$, equipped with a germ of a codimension-one holomorphic
foliation $\F$.

\begin{teo2}\normalfont
Let $(X,0)$ be a germ of a normal-tree complex analytic variety of dimension $n\geq2$. Given a germ of a codimension-one holomorphic foliation $\F$ on $(X,0)$, assume, when $n\geq3$, that $\F$ is defined by a holomorphic integrable $1$-form $\omega$ with $\codim_X\Sing(\F)\geq2$ and that $((X,0),\F)$ is quotient-prolongation-admissible. Then the following conditions are equivalent:
\begin{enumerate}
\item[(i)] $\F$ admits a holomorphic first integral $f\in\OO_{X,0}$;
\item[(ii)] there is a neighborhood $U$ of $0$ in $X$ where the leaves of $\F|_U$ are closed subsets of $U\setminus\Sing(\F)$, and only finitely many of the leaves of $\F|_U$ adhere to $0\in U\subset X$.
\end{enumerate}
\end{teo2}

Observe that a leaf $L$ of $\F|_U$ which is closed in
$U\setminus\Sing(\F)$ is an analytic subvariety of
$U\setminus\Sing(\F)$ of dimension $n-1$. Since
\[
\dim\Sing(\F)\leq n-2<n-1=\dim L,
\]
the Remmert--Stein extension theorem implies that the closure
$\overline L^{\,U}$ is an analytic subvariety of $U$. Notice that, when
$L$ accumulates on $\Sing(\F)$, it is its closure, rather than $L$
itself, that is an analytic subvariety of $U$.

\subsection{Generic surface sections for the higher-dimensional arguments}

After choosing a small representative of the germ, one may consider $X$ as a closed analytic subset of an open neighborhood $\Omega$ of the origin $0\in\C^m$:
\[
X\simeq\{z\in\Omega;\;f_1(z)=\cdots=f_r(z)=0\}
\]
for some $f_1,\ldots,f_r\in\OO(\Omega)$. Let $n=\dim X\geq2$.

If $n=2$, then we are in the situation considered in
Theorem~\ref{thm:A}. Assume therefore that $n\geq3$.

Since $(X,0)$ is a normal-tree germ, by definition we may choose a
Bertini-type generic linear subspace $H\subset\C^m$ of codimension
$n-2$, passing through $0$, such that
\[
(S,0):=(X\cap H,0)
\]
is a normal-tree complex analytic surface germ. Since the pair is
quotient-prolongation-admissible, we choose $H$ in the intersection of the
corresponding nonempty Zariski-open families. Thus $H$ is in general position
with respect to $((X,0),\F)$ and satisfies the smooth
quasi-\'etale-cover conditions of Theorem~\ref{thm:E}. The restriction
\[
\F_S:=\F|_S
\]
is a holomorphic foliation by curves on $(S,0)$ with an isolated
singularity at $0$.

For context, we recall the following Bertini-type normality result.

\begin{teo}[Bertini-type normality theorem]\normalfont
Let $(X,0)\subset(\C^m,0)$ be a normal Cohen--Macaulay complex
analytic germ of dimension $n\geq3$. Then a generic complex hyperplane
$H\subset\C^m$ passing through $0$ has the property that
$(X\cap H,0)$ is a normal complex analytic germ of dimension $n-1$.
More generally, for a single hyperplane section, the
Cohen--Macaulay assumption may be replaced by the local depth
condition
\[
\operatorname{depth}\mathcal O_{X,0}\geq3.
\]
\end{teo}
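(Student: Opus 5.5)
We argue by induction on the codimension of the section, reducing to the case of a single hyperplane section. The case of a single generic hyperplane $H$ through $0$ under $\operatorname{depth}\OO_{X,0}\geq3$ contains the Cohen--Macaulay case, since a normal Cohen--Macaulay germ of dimension $n\geq3$ has depth $n\geq3$. We verify Serre's criterion for $\OO_{X\cap H,0}$: normality is equivalent to $(R_1)$ together with $(S_2)$.

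For $(R_1)$ we use the local Bertini theorem for singular loci. Since $X$ is normal, $\codim_X X_{\sing}\geq2$. For a generic hyperplane through $0$, the smooth locus satisfies $(X\cap H)_{\reg}\supset (X_{\reg}\cap H)\setminus\{0\}$ near $0$. To see this, note that at points of $X_{\reg}\setminus\{0\}$, a generic $H$ is transverse to $X_{\reg}$ away from a set of smaller dimension. This follows from Sard's theorem applied to the map $X_{\reg}\setminus\{0\}\to\mathbb P^{m-1}$ together with the standard local Bertini argument of Flenner. It also uses the fact that $\dim(X_{\sing}\cap H)\leq\dim X_{\sing}-1$, since $H$ generic does not contain any component of $X_{\sing}$ through $0$. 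Hence
\[
\codim_{X\cap H}\bigl((X\cap H)_{\sing}\bigr)\geq 2,
\]
with the point $0$ itself contributing codimension $n-1\geq2$.

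For $(S_2)$, let $\ell$ be the linear form defining $H$. For generic $\ell$, $\ell$ is a nonzerodivisor on $\OO_{X,0}$ (a domain). Hence
\[
\operatorname{depth}\OO_{X\cap H,0}=\operatorname{depth}\OO_{X,0}-1\geq2 .
\]
At the other points $x$ of $X\cap H$ near $0$, the ring $\OO_{X,x}$ satisfies $(S_2)$ by normality. For generic $H$, $\ell-\ell(x)$ is again a nonzerodivisor, so $\operatorname{depth}\OO_{X\cap H,x}\geq\min(2,\dim\OO_{X\cap H,x})$ follows from $(S_3)$-type behaviour along $H$. We expect this to be the main obstacle. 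At non-closed points we only know $(S_2)$ for $X$, and cutting drops depth by one. The remedy is to use that, off a codimension-$\geq3$ subset, $X$ is $(S_3)$. Indeed, the non-$(S_3)$ locus of a normal space has codimension $\geq3$, because normal implies $(S_2)$ and $(S_3)$ can fail only where $\dim\geq3$ and depth $=2$. Generic $H$ cuts this locus in codimension $\geq3$ of $X\cap H$, which preserves $(S_2)$ on the section. The point $0$ itself is handled by the depth hypothesis.

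Combining $(R_1)$ and $(S_2)$, Serre's criterion shows that $(X\cap H,0)$ is normal, and it has dimension $n-1$ since $\ell$ is a nonzerodivisor. Genericity is the intersection of finitely many nonempty Zariski-open conditions on $\ell$, namely the nonzerodivisor, transversality and proper-intersection conditions. Hence the set of admissible hyperplanes is again nonempty and Zariski-open.
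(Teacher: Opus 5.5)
\textbf{Assessment.} Your overall route is the standard one behind the local Bertini theorems of Flenner and Manaresi, which the paper only cites: Serre's criterion, with $(R_1)$ from Bertini for the smooth locus and $(S_2)$ from cutting depth by a nonzerodivisor. Your $(R_1)$ step and the count $\operatorname{depth}\mathcal{O}_{X\cap H,0}\geq 2$ are fine.

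\textbf{The gap.} It lies in the $(S_2)$ step at the non-maximal primes. The inference ``the non-$(S_3)$ locus $N$ has codimension $\geq 3$, a generic $H$ cuts it in codimension $\geq 3$ of $X\cap H$, hence $(S_2)$ is preserved'' is not valid. $(S_2)$ must hold at every prime of height $\geq 2$, so a bad set of codimension $\geq 3$ is still a violation. Precisely, for a prime $\mathfrak p\ni\ell$ of $\mathcal{O}_{X,0}$ one has $\operatorname{depth}(\mathcal{O}_{X,0}/\ell)_{\mathfrak p}=\operatorname{depth}\mathcal{O}_{X,\mathfrak p}-1$. So $X\cap H$ fails $(S_2)$ at $\mathfrak p$ exactly when $\operatorname{depth}\mathcal{O}_{X,\mathfrak p}=2<\operatorname{ht}\mathfrak p$, whatever the codimension of $\mathfrak p$ in $X\cap H$.

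\textbf{A counterexample to the inference.} Let $Y$ be the affine cone over a projectively normal abelian threefold; it is normal, $\dim Y=4$, and it has depth $2$ at the vertex $v$. Put $X=Y\times\mathbb{C}$. Then $n=5$, $\operatorname{depth}\mathcal{O}_{X,0}=3$, and $N=\{v\}\times\mathbb{C}$. Take $H=H'\times\mathbb{C}$ with $v\in H'$. Then $N\cap H=N$ has codimension $3$ in $X\cap H$, as your criterion requires. Yet $X\cap H=(Y\cap H')\times\mathbb{C}$ has depth $1$ at the generic point of $N$, so it is not normal, even though $\operatorname{depth}\mathcal{O}_{X\cap H,0}=2$. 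What matters is whether $H$ \emph{contains} a bad subvariety, not the codimension of $N\cap H$. Your discussion of closed points $x\neq 0$ of $N\cap H$ has the same problem; note also that $\ell(x)=0$ there, so the local equation is just $\ell$.

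\textbf{The missing ingredient: finiteness.} Let $\mathcal{P}$ be the set of primes $\mathfrak p\subset\mathcal{O}_{X,0}$ with $\operatorname{depth}\mathcal{O}_{X,\mathfrak p}=2<\operatorname{ht}\mathfrak p$; you need that $\mathcal{P}$ is finite. One way is through the analytic sets $S_k=\{y:\operatorname{depth}\mathcal{O}_{X,y}\leq k\}$. The Scheja and Siu--Trautmann dimension estimates for these sets sharpen, on a normal space, to $\dim S_k\leq k-2$ for $k<n$. A $d$-dimensional $Z$ with depth $2$ at its generic point lies in $S_{d+2}$. It is therefore a top-dimensional component of one of finitely many germs. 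Local duality, via supports of $\operatorname{Ext}$ modules, gives the same conclusion.

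\textbf{Finishing the argument.} The hypothesis $\operatorname{depth}\mathcal{O}_{X,0}\geq 3$ says exactly that $\mathfrak m\notin\mathcal{P}$. Hence each $\mathfrak p\in\mathcal{P}$ contains only a proper linear subspace of linear forms, and a generic $\ell$ lies in none of them. Then $\mathcal{O}_{X,0}/\ell$ satisfies $(S_2)$ at all of its primes, and Serre's criterion gives normality of the germ. Normality at nearby points then follows from openness of the normal locus, so no separate pointwise analysis is needed. The initial induction on the codimension of the section is unnecessary, since the statement concerns a single hyperplane.
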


This is a local Bertini theorem with a fixed base point; see
\cite{Flenner1977,Manaresi1982}. The depth condition is needed to guarantee
Serre's condition $S_2$ at $0$. In the present proof, however, the
normality and the normal-tree property of $(S,0)$ follow directly
from the definition of a normal-tree germ, rather than from normality
of $(X,0)$ alone.

\subsection{Formal and holomorphic prolongation}

\begin{proof}[Proof of Theorem~\ref{theorem:formal-prolongation}]
Put $I=\mathcal I_{Y,0}$ and $M_k=I^k/I^{k+1}$, and set
$U=Y\setminus\{0\}$. At every point $p\in U$, both $X$ and the foliation
are nonsingular, and $Y$ is transverse to the foliation. The classical
Mattei--Moussu prolongation theorem therefore gives a unique germ $F_p$ of a
holomorphic first integral near $p$ satisfying $F_p|_Y=f_0$. On overlaps,
uniqueness implies that the jets of the local prolongations agree along $Y$.
Consequently, for every $k\geq1$, their order-$k$ transverse terms glue to a
section
\[
 s_k\in H^0(U,M_k|_U).
\]
If $\operatorname{depth}_{\mathfrak m_Y}M_k\geq2$, then
$H^0_{\{0\}}(M_k)=H^1_{\{0\}}(M_k)=0$. The local-cohomology exact sequence
thus identifies
\[
 M_k\simeq H^0(U,M_k|_U).
\]
We spell out the induction. Suppose that a class
$f^{(k)}\in\OO_{X,0}/I^{k+1}$ has been constructed, restricts to $f_0$, and
satisfies the first-integral equation modulo order $k+1$ along $Y$.  Here
the differential filtration is
\[
 \mathcal F_I^r\Omega^p_{X,0}
 :=I^r\Omega^p_{X,0}
   +d(I^{r+1})\wedge\Omega^{p-1}_{X,0}.
\]
It satisfies $d(I^{r+1})\subset\mathcal F_I^r\Omega^1_{X,0}$, so exterior
differentiation is continuous for these filtrations. Choose
any lift $\widetilde f^{(k+1)}$ to
$\OO_{X,0}/I^{k+2}$. On $U$, compare this lift with the order-$(k+1)$ jet of
the local Mattei--Moussu prolongation. Their difference is a uniquely
determined section
\[
 s_{k+1}\in H^0(U,M_{k+1}|_U).
\]
The depth hypothesis extends $s_{k+1}$ uniquely to an element of
$M_{k+1}$. Adding this element to $\widetilde f^{(k+1)}$ produces a class
$f^{(k+1)}$ satisfying the first-integral equation modulo $I^{k+2}$ and
reducing to $f^{(k)}$. More explicitly, on $U$ the corrected class has the
same order-$(k+1)$ jet as the local holomorphic prolongations. Hence the
class of $\omega\wedge df^{(k+1)}$ in
$\Omega^2_{X,0}/\mathcal F_I^{k+1}\Omega^2_{X,0}$ restricts to zero on
$U$. By the injectivity hypothesis in
Theorem~\ref{theorem:formal-prolongation}, this class is zero. Therefore
\[
 \omega\wedge df^{(k+1)}=0
 \quad\bmod\mathcal F_I^{k+1}\Omega^2_{X,0}.
\]
This also shows
that the equation is independent of the arbitrary lift and is compatible
with the class constructed at the preceding stage. Starting with
$f^{(0)}=f_0$, induction gives a
compatible system of solutions modulo $I^{k+1}$ for all $k$. Its inverse
limit is an $I$-adic formal first integral $\widehat f_I$ restricting to
$f_0$. The differential equation holds in
$\varprojlim_r\Omega^2_{X,0}/\mathcal F_I^r\Omega^2_{X,0}$ because it holds
at every finite order. Since
$I\subset\mathfrak m_{X,0}$, the maps
$\OO_{X,0}/I^k\to\OO_{X,0}/\mathfrak m_{X,0}^k$ induce a homomorphism from the
$I$-adic completion to $\widehat\OO_{X,0}$; the image of $\widehat f_I$ is a
formal first integral. It is nonconstant. Indeed, if its difference from
$f_0(0)$ vanished in the $\mathfrak m_{X,0}$-adic completion, restriction
to $Y$ would give
\[
 f_0-f_0(0)\in\bigcap_{r\geq1}\mathfrak m_{Y,0}^r=0
\]
by the Krull intersection theorem, contrary to the choice of $f_0$.
\end{proof}

\begin{proof}[Proof of Theorem~\ref{thm:E}]
Let
$\pi:(\widetilde X,\widetilde0)\to(X,0)$ be the finite quasi-\'etale quotient
map and put $\widetilde Y=(\pi^{-1}(Y))_{\mathrm{red}}$. The pullback
\[
 \widetilde f_0=f_0\circ\pi|_{\widetilde Y}
\]
is a holomorphic first integral of the restricted saturated foliation
$\widetilde\F|_{\widetilde Y}$. Since $\widetilde X$ and $\widetilde Y$ are
smooth, the classical Mattei--Moussu prolongation theorem gives a unique
holomorphic first integral $\widetilde f\in\OO_{\widetilde X,\widetilde0}$
such that $\widetilde f|_{\widetilde Y}=\widetilde f_0$.

For $g\in G$, the germ $g^*\widetilde f$ is another first integral of the
$G$-invariant pullback foliation. Moreover, because $\widetilde Y$ is
$G$-invariant and $\pi\circ g=\pi$, it has the same restriction
$\widetilde f_0$ to $\widetilde Y$. Uniqueness gives
$g^*\widetilde f=\widetilde f$. Hence $\widetilde f$ descends through the
finite quotient to a germ $f\in\OO_{X,0}$. The first-integral identity holds
on the dense locus where $\pi$ is \'etale and therefore holds everywhere in
the saturated interpretation. Finally,
\[
 (\pi|_{\widetilde Y})^*(i^*f)
 =\widetilde f|_{\widetilde Y}
 =(\pi|_{\widetilde Y})^*f_0.
\]
Since the finite map $\pi|_{\widetilde Y}$ is surjective, its pullback on
holomorphic functions is injective, and $i^*f=f_0$.
\end{proof}

\subsection{Proof of Theorem \ref{thm:B}}

\begin{proof}[Proof of Theorem B]
Retain the generic surface section $(S,0)$ fixed above.

\noindent\textup{(i)$\Rightarrow$(ii).}
Let $f\in\OO_{X,0}$ be a nonconstant holomorphic first integral for
$\F$, and choose a sufficiently small representative
$f\colon U\to\C$. Every leaf of $\F$ in
$U\setminus\Sing(\F)$ is a connected component of a fiber of $f$ and
is therefore closed in $U\setminus\Sing(\F)$. A leaf adhering to $0$
is contained in the fiber $f^{-1}(f(0))$. This analytic hypersurface
germ has only finitely many irreducible components, and the regular
part of each component has only finitely many connected components in
a sufficiently small representative. Hence only finitely many leaves
adhere to $0$.

\medskip
\noindent\textup{(ii)$\Rightarrow$(i).}
Assume condition \textup{(ii)} and choose the generic normal-tree surface
section $(S,0)$ fixed above. A leaf of the restricted foliation
$\F_S$ is a connected component of $S\cap L$, where $L$ is a leaf of
$\F$. Since the ambient leaves are closed in
$U\setminus\Sing(\F)$ and the section is in general position, the
leaves of $\F_S$ are closed in $S\setminus\{0\}$.

If a leaf of $\F_S$ adheres to $0$, then its ambient leaf $L$ also
adheres to $0$. There are only finitely many such ambient leaves. For
each of them, the Remmert--Stein theorem implies that
$\overline L^{\,U}$ is an analytic hypersurface germ. The intersection
\[
\overline L^{\,U}\cap S
\]
is an analytic curve germ and has only finitely many irreducible
branches. Away from the origin, the regular part of each branch is
contained in a leaf of $\F_S$; after choosing a sufficiently small
representative, it has only finitely many connected components.
Consequently, only finitely many leaves of $\F_S$ adhere to $0$. Thus
$\F_S$ satisfies condition \textup{(ii)} of Theorem~A.
By Theorem~A, $\F_S$ admits a holomorphic first integral. The
Prolongation Theorem, Theorem~\ref{thm:E}, then yields a holomorphic first integral for
$\F$ on $(X,0)$.
\end{proof}
\section{Formal first integrals}\label{sec:formal}

\subsection{Formal functions on analytic spaces}

Let $(X,0)$ be a germ of complex analytic variety.
\begin{itemize}
\item $\OO_{X,0}$ denotes the local ring of holomorphic function germs on $X$ at $0\in X$;
\item $\mm_0\subset\OO_{X,0}$ denotes the maximal ideal of $\OO_{X,0}$, i.e., $\mm_0=\{f\in\OO_{X,0};\ f(0)=0\}$;

\item $\mm_0^k$ is the ideal of $\OO_{X,0}$ generated by products
$f_1\cdots f_k$ of germs $f_1,\ldots,f_k\in\mm_0$;
\item we have $\cdots\subset \mm_0^{k+1}\subset \mm_0^k\subset\cdots\subset\mm_0^2\subset\mm_0$.
\end{itemize}

The ring of formal functions on $(X,0)$ is the $\mm_0$-adic completion
\[
  \widehat{\OO}_{X,0}:=\varprojlim_k \OO_{X,0}/\mm_0^{k+1}.
\]
Thus, an element $\wh f\in\widehat{\OO}_{X,0}$ is represented by a sequence
$(f_0,f_1,f_2,\ldots)$, where $f_k\in \OO_{X,0}/\mm_0^{k+1}$, under the compatibility condition $f_{k+1}\equiv f_k \pmod{\mm_0^{k+1}}$.

\begin{exe}\label{exem4}
Consider the embedded case $(X,0)\subset (\CC^N,0)$, i.e.,
  \[(X,0)=\{f_1=\cdots=f_r=0\}\subset(\CC^N,0)
\]
for some $f_1,\ldots,f_r\in\OO_{\CC^N,0}$.

We know that $\OO_{X,0}$ consists of the restrictions to
$\{f_1=\cdots=f_r=0\}$ of germs $F\in\OO_{\CC^N,0}$, so that, if we consider coordinates $(z_1,\ldots,z_N)\in(\CC^N,0)$, then
\[
  \OO_{X,0}\simeq \frac{\CC\{z_1,\ldots,z_N\}}{\langle f_1,\ldots,f_r\rangle}.
\]
This shows that the completion $\widehat{\OO}_{X,0}$ is given by
\[
  \widehat{\OO}_{X,0}
  \simeq
  \frac{\CC[[z_1,\ldots,z_N]]}{\langle f_1,\ldots,f_r\rangle^\wedge},
\]
where
\[
  \CC[[z_1,\ldots,z_N]]=
  \left\{\widehat f=\sum_{(i_1,\ldots,i_N)\in\NN^N}
  a_{i_1\ldots i_N} z_1^{i_1}\cdots z_N^{i_N},\quad
  a_{i_1\ldots i_N}\in\CC\right\}
\]
and $\langle f_1,\ldots,f_r\rangle^\wedge$ is the ideal of $\CC[[z_1,\ldots,z_N]]$ generated by $f_1,\ldots,f_r$. In particular, an element $\wh f\in\widehat{\OO}_{X,0}$ is represented by a formal power series
\[
  \widehat f(z)=\sum_{I\in\NN^N} a_I z^I\in\CC[[z_1,\ldots,z_N]]
\]
once considered modulo the ideal that defines $X$. The inclusion $\OO_{X,0}\to \widehat{\OO}_{X,0}$ is then obtained by Taylor expansion.
\end{exe}

\subsection{Lifting formal functions}

Let $(X,0)$ be a normal two-dimensional complex analytic variety germ and let $\pi\colon\wt X\to X$ be a resolution of $(X,0)$, with exceptional divisor $E=\pi^{-1}(0)\subset\wt X$. Fix a point $\wt p\in E$. We consider the natural pull-back homomorphism
\[\begin{array}{cccc}
  \pi_{\wt p}^{\#}\colon&\OO_{X,0}&\to& \OO_{\wt X,\wt p}\\
   &f&\mapsto & (f\circ\pi)_{\wt p}.\end{array}
\]
Because $\pi(\wt p)=0$, we have $\pi_{\wt p}^{\#}(\mm_0)\subset \mm_{\wt p}$, for the ideals $\mm_0\subset\OO_{X,0}$ and $\mm_{\wt p}\subset\OO_{\wt X,\wt p}$. Thus $\pi_{\wt p}^{\#}$ is continuous in the $\mm$-adic topologies in $\OO_{X,0}$ and $\OO_{\wt X,\wt p}$, and admits a unique extension to the completions
\[
  \widehat{\pi_{\wt p}^{\#}}\colon
  \widehat{\OO}_{X,0}\to \widehat{\OO}_{\wt X,\wt p}.
\]
Given $\wh f\in\widehat{\OO}_{X,0}$, we obtain the \textit{pull-back} or lifting of $\wh f$ via $\pi\colon\wt X\to X$ defined by
\[
  (\pi^*\wh f)_{\wt p}:=\widehat{\pi_{\wt p}^{\#}}(\wh f),
  \qquad \forall\wt p\in E.
\]
A global way of introducing $\pi^*\wh f$ is as follows. Take a small
neighborhood $\wt U\subset\wt X$ of $E$ and put
$\mathcal J_{\wt U}:=\mm_0\OO_{\wt U}$, whose support is $E$. For
$k\in\NN$, the finite jet
$f_k\in \OO_{X,0}/\mm_0^{k+1}$ pulls back to
$\pi^*f_k\in \OO_{\wt U}/\mathcal J_{\wt U}^{k+1}$.
The compatibility of the jets $f_k$ implies the compatibility of the $\pi^*f_k$, and therefore
\[
  \pi^*\wh f=(\pi^*f_0,\pi^*f_1,\ldots)
  \in \varprojlim_k \OO_{\wt U}/\mathcal J_{\wt U}^{k+1}.
\]
Thus, ``$\pi^*\wh f$ is a formal function along the exceptional divisor.'' We shall write $\pi^*\wh f\in\widehat{\OO}_{\wt X,E}$.

Given a holomorphic derivation $v\colon\OO_{X,0}\to\OO_{X,0}$, by the Leibniz rule
\[
  v(fg)=v(f)g+f v(g),
\]
we have $v(\mm^{k+1})\subset \mm^k$,
where $\mm=\mm_0\subset\OO_{X,0}$ is the maximal ideal. This means that $v$ is continuous for the $\mm$-adic topology. Take an element $\wh f\in\widehat{\OO}_{X,0}$ as represented by
$\wh f=(f_0,f_1,\ldots)$,
   $f_k\in\OO_{X,0}/\mm^{k+1}$. For each jet $f_k$, we choose a representative $F_k\in\OO_{X,0}$ and define the $k$-th jet of $\widehat v(\wh f)$ as
\[
  \widehat v(\wh f)_k:=v(F_{k+1})\pmod{\mm^{k+1}}.
\]
Then we define
\[
  \widehat v(\wh f)\in \varprojlim_k \OO_{X,0}/\mm^{k+1}
  =\widehat{\OO}_{X,0}
\]
by these jets.

If $\FF$ is a holomorphic foliation germ, then a \textit{formal first
integral} for $\FF$ is a nonconstant formal function
$\wh f\in\widehat{\OO}_{X,0}$ such that
\[
\widehat v(\wh f)=0
\]
for every holomorphic derivation $v$ tangent to $\FF$.
Here nonconstant means that $\wh f\notin\CC$, or, equivalently,
\[
\wh f-\wh f(0)\neq0.
\]
When the tangent sheaf of $\FF$ is locally generated by one derivation
$v$, it is enough to require $\widehat v(\widehat f)=0$.

For a codimension-one foliation on a higher-dimensional germ defined by a
holomorphic integrable $1$-form $\omega$, we shall use the following
differential-form formulation: a formal first integral
is a nonconstant $\widehat f\in\widehat{\mathcal O}_{X,0}$ satisfying
\[
 \omega\wedge d\widehat f=0
\]
in the completed module of K\"ahler $2$-forms. This is the notion used in
Theorem~\ref{thm:D}.

Let $\pi_1\colon X^*\to X$ be a resolution of $(X,0)$ with exceptional divisor
$E^*=\pi_1^{-1}(0)\subset X^*$.

If $X$ is normal, then we may consider a holomorphic vector field $v$
defined in a neighborhood $U$ of $0$ in $X$ and defining $\FF$ in
$U$. The lifted foliation $\FF^*$ is regular on $X^*\setminus E^*$,
and its singular set is a finite subset of $E^*$. Applying Seidenberg's
resolution theorem to $\FF^*$, we obtain a proper modification
\[
\pi_2\colon\wt X\longrightarrow X^*.
\]
Setting
\[
\pi:=\pi_1\circ\pi_2\colon\wt X\longrightarrow X,
\]
we obtain a resolution of the pair $((X,0),\FF)$, with exceptional
divisor $E:=\pi^{-1}(0)\subset\wt X$ and lifted foliation
$\wt\FF=\pi^*(\FF)$, whose singular set is finite and contained in
$E$.

\begin{lema}[Formal injectivity for a proper modification]
\label{lemma:formal-injectivity}\normalfont
Let $\rho:(Z,E)\to(X,0)$ be a proper modification, where $(X,0)$ is a
normal complex analytic germ and $Z$ is nonsingular. For every $p\in E$,
the homomorphism
\[
 \widehat{\rho_p^\#}:\widehat{\mathcal O}_{X,0}
 \longrightarrow\widehat{\mathcal O}_{Z,p}
\]
is injective.
\end{lema}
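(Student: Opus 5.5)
The plan is to reduce to a statement about the analytic (not formal) map and then use faithful flatness of completion together with Krull/Chevalley-type arguments. Since $(X,0)$ is normal, $\OO_{X,0}$ is an integral domain. Moreover it is analytically normal: the completion of a normal excellent local ring is normal, hence $\widehat\OO_{X,0}$ is again a domain. Let $\widehat f\in\widehat\OO_{X,0}$ with $\widehat{\rho_p^\#}(\widehat f)=0$. We aim to show $\widehat f=0$.

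The first step is to pass from the point $p$ to the whole fibre, or alternatively to work only at $p$. Working at $p$, we use the fact that $\rho$ is a proper modification, hence bimeromorphic onto the irreducible germ $X$. The image under $\rho$ of any neighbourhood of $p$ in $Z$ contains a nonempty open subset of $X_{\reg}$ adherent to $0$. Consequently the analytic homomorphism $\rho_p^\#:\OO_{X,0}\to\OO_{Z,p}$ is injective: a germ $g$ with $g\circ\rho\equiv0$ near $p$ vanishes on an open subset of the irreducible germ $X$, hence is zero. Injectivity alone does not pass to completions, so the second step is to invoke Gabrielov's or Chevalley's framework. The homomorphism $\rho_p^\#$ is a morphism of analytic local rings. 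Its generic rank equals $\dim\OO_{X,0}$, because the map is bimeromorphic and therefore generically of maximal rank onto $X$. Gabrielov's rank theorem (the formal rank equals the analytic rank equals the generic rank whenever the analytic rank equals the generic rank) then gives
\[
\dim \widehat\OO_{X,0}/\ker\widehat{\rho_p^\#}=\dim\OO_{X,0}.
\]
Since $\widehat\OO_{X,0}$ is a domain of the same dimension, the kernel, being a prime ideal of height zero, must vanish.

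A more elementary alternative, which I would try first, reduces the claim to the injectivity of the induced maps on $\mm$-adic filtrations. By Chevalley's lemma applied to the injective map $\rho_p^\#$ of complete-analytic local rings with $\OO_{Z,p}$ finite over nothing, one gets a function $\beta(k)\to\infty$ with
\[
(\rho_p^\#)^{-1}(\mm_p^{\beta(k)})\subset\mm_0^{k}.
\]
This is precisely the statement that the $\mm_0$-adic topology coincides with the topology induced from $\OO_{Z,p}$. Granting it, if $\widehat f=(f_k)$ maps to zero, then representatives $F_{\beta(k)}$ satisfy $\rho_p^\#F\in\mm_p^{\beta(k)}$, hence $F\in\mm_0^k$, hence $f_{k-1}=0$ for all $k$. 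The main obstacle is exactly this Chevalley estimate. In the usual setting it requires $\OO_{Z,p}$ to be finite over the image, or the map to be "strongly injective", which is the content of Gabrielov's theorem. The genuinely delicate input is therefore the equality of analytic and generic rank for the bimeromorphic map $\rho$ at $p$. I would verify it by noting that $\rho$ restricted to a neighbourhood of $p$ has image whose analytic Zariski closure is all of $(X,0)$, by irreducibility and openness on $Z\setminus E$. I would then cite Gabrielov, or Izumi's theorem for normal (hence analytically irreducible) germs, which gives a linear Chevalley function for such $\rho$.
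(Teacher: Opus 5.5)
Your main route is essentially the paper's proof: the generic rank of $\rho_p^\#$ equals $\dim\OO_{X,0}$ because $\rho$ is bimeromorphic near $p$, the rank chain then forces $\dim\widehat{\OO}_{X,0}/\ker\widehat{\rho_p^\#}=\dim\OO_{X,0}$, and this height-zero prime vanishes because $\widehat{\OO}_{X,0}$ is a normal domain. One correction, about the citation only: that step needs just the elementary inequalities $r(\varphi)\le r_F(\varphi)\le r_A(\varphi)\le\dim\OO_{X,0}$, not Gabrielov's theorem (whose actual content is $r=r_F\Rightarrow r_F=r_A$), so the Chevalley/Izumi detour is unnecessary.
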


\begin{proof}
Put $A=\mathcal O_{X,0}$, $B=\mathcal O_{Z,p}$, and
$\varphi=\rho_p^\#:A\to B$. These are local complex analytic algebras;
$A$ is normal and $B$ is regular. Since $\rho$ is a modification, every
neighborhood of $p$ meets the open set on which $\rho$ is biholomorphic.
Consequently, the generic rank of the morphism of complex analytic algebras
$\varphi$ is maximal:
\[
 r(\varphi)=\dim A.
\]
The generic, formal, and analytic ranks satisfy
\[
 r(\varphi)\leq r_F(\varphi)\leq r_A(\varphi)\leq\dim A.
\]
Hence $r_F(\varphi)=\dim A$. By the definition of formal rank
\cite[Definition~1.3]{BelottoCurmiRond2021},
\[
 r_F(\varphi)=\dim\widehat A-
 \operatorname{ht}\ker\widehat\varphi.
\]
The analytic local ring $A$ is excellent; since it is normal, its completion
$\widehat A$ is a normal local ring and hence an integral domain. Therefore
the equality $r_F(\varphi)=\dim\widehat A$ implies that
$\ker\widehat\varphi$ is a height-zero ideal in a domain, and consequently
$\ker\widehat\varphi=0$. Thus
$\widehat{\rho_p^\#}:\widehat A\to\widehat B$ is injective.

For broader background on formal functions and formal embeddings under
proper morphisms, see also \cite{HironakaMatsumura1968}.
\end{proof}

\begin{propo}\label{prop4}\normalfont
A formal first integral $\widehat f\in\widehat{\OO}_{X,0}$ lifts by $\pi\colon\wt X\to X$ to a formal function $\pi^*\widehat f\in\widehat{\OO}_{\wt X, E}$ which is a formal first integral for the lifted foliation $\wt\FF=\pi^*\FF$, along the exceptional divisor $E=\pi^{-1}(0)$.
\end{propo}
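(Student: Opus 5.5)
We must show two things: that $\pi^*\widehat f$ is annihilated by the derivations generating $\wt\FF$ at every point of $E$, and that it is nonconstant there. We work locally at each $\wt p\in E$, using the pointwise description $(\pi^*\widehat f)_{\wt p}=\widehat{\pi_{\wt p}^{\#}}(\widehat f)$, and then check compatibility with the global jet description in $\varprojlim_k\OO_{\wt U}/\mathcal J_{\wt U}^{k+1}$.

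Fix the tangent derivation $v$ on $U\ni 0$ defining $\FF$, as given by Lemma~\ref{lemma:camachovector}. On $\wt U\setminus E$ the pullback $\pi^*v$ is a holomorphic vector field tangent to $\wt\FF$. Since $v$ is tangent to the exceptional locus $\pi(E)=\{0\}$, we expect $\pi^*v$ to extend holomorphically across $E$. More precisely, $v$ lifts to each blow-up of the Seidenberg process, and on the normal surface $\pi_1^*v$ extends by Hartogs in the smooth surface $X^*$ (it is bounded/holomorphic off the codimension-one divisor). This extension is the step needing care. First we would try Hartogs extension after observing that $v$ lifts to the resolution because it preserves the ideal $\mm_0$ (vector fields on normal surface singularities lift to resolutions). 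If that fails, we can simply write $\pi^*v=h\,\wt v$, where $\wt v$ is a local generator of $\wt\FF$ at $\wt p$ and $h$ is meromorphic. Either way, near $\wt p$ we get $\pi^*v=h\,\wt v$ with $h\not\equiv 0$ holomorphic, or meromorphic with $h\not\equiv0$.

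The key identity is the chain rule at finite order. For $F\in\OO_{X,0}$ we have $\pi^{\#}_{\wt p}(v(F))=(\pi^*v)(\pi^{\#}_{\wt p}F)$. Both sides are continuous for the $\mm$-adic topologies, because $v(\mm^{k+1})\subset\mm^k$, $\pi^{\#}(\mm_0)\subset\mm_{\wt p}$, and holomorphic derivations on $\OO_{\wt X,\wt p}$ are continuous. Passing to completions therefore gives $\widehat{\pi_{\wt p}^{\#}}(\widehat v\widehat f)=\widehat{\pi^*v}(\pi^*\widehat f)_{\wt p}$. Since $\widehat v\widehat f=0$, we obtain $h\,\widehat{\wt v}(\pi^*\widehat f)_{\wt p}=0$ in $\widehat\OO_{\wt X,\wt p}\simeq\C[[x,y]]$, which is a domain. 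With $h\ne0$ (after clearing denominators in the meromorphic case), this forces $\widehat{\wt v}(\pi^*\widehat f)_{\wt p}=0$. This holds for every generator, since the saturated tangent sheaf of $\wt\FF$ is locally free of rank one on the smooth surface $\wt X$.

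Nonconstancy follows from Lemma~\ref{lemma:formal-injectivity} applied to $\rho=\pi$. Since $\widehat f-\widehat f(0)\neq0$, its image $(\pi^*\widehat f)_{\wt p}-\widehat f(0)$ is nonzero at every $\wt p\in E$. Finally, these local germs are the completions at $\wt p$ of the compatible jets $\pi^*f_k$. The first-integral equation therefore holds jetwise along $E$, and $\pi^*\widehat f\in\widehat\OO_{\wt X,E}$ is a formal first integral of $\wt\FF$ along $E$. We expect the main obstacle to be the lifting/extension of $v$ across $E$ (the relation $\pi^*v=h\wt v$); everything else is formal continuity plus injectivity.
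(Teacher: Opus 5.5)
Your proposal is correct and follows the paper's proof essentially step by step. You relate $\pi^*v$ to a local generator $\wt v$ of $\wt\FF$ by a nonzero meromorphic factor, clear denominators, pass the identity to the domain $\widehat\OO_{\wt X,\wt p}$, and get nonconstancy from Lemma~\ref{lemma:formal-injectivity}. You should drop the Hartogs remark: Hartogs gives nothing across the codimension-one divisor $E$, and for a non-minimal resolution $\pi^*v$ can acquire poles along $E$. That remark is harmless, though, since your meromorphic fallback is exactly what the paper uses and is what carries the argument.
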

\begin{proof}
Fix a point $\widetilde p\in E$ and consider the local homomorphism
\[
\begin{array}{cccl}
\pi_{\widetilde p}^{\#}\colon
&\OO_{X,0}&\longrightarrow&\OO_{\widetilde X,\widetilde p}\\
&f&\longmapsto&(f\circ\pi)_{\widetilde p}.
\end{array}
\]
Since $\pi(\widetilde p)=0$, we have
\[
\pi_{\widetilde p}^{\#}(\mathfrak m_0)
\subset\mathfrak m_{\widetilde p}.
\]
Thus, $\pi_{\widetilde p}^{\#}$ extends to the completions:
\[
\widehat{\pi_{\widetilde p}^{\#}}\colon
\widehat{\OO}_{X,0}
\longrightarrow
\widehat{\OO}_{\widetilde X,\widetilde p}.
\]
The formal pull-back is defined by
\[
(\pi^*\widehat f)_{\widetilde p}
:=
\widehat{\pi_{\widetilde p}^{\#}}(\widehat f).
\]
Applying Lemma~\ref{lemma:formal-injectivity} to the present map shows that
$\widehat{\pi_{\widetilde p}^{\#}}$ is injective.  The pointwise pullbacks
are compatible on overlaps.  They therefore define a section of the
completion along the scheme-theoretic fibre defined by
$\mathfrak m_0\mathcal O_{\widetilde X}$.  Since this ideal and the reduced
ideal of $E$ have the same radical, their adic topologies are equivalent;
hence this is also a formal function along $E$. After subtracting the
constant term, put
\[
\widehat f_0:=\widehat f-\widehat f(0)\neq0.
\]
Then
\[
(\pi^*\widehat f_0)_{\widetilde p}
=\widehat{\pi_{\widetilde p}^{\#}}(\widehat f_0)\neq0.
\]
Consequently, $\pi^*\widehat f$ is nonconstant.

Let $\widetilde v$ be a local holomorphic generator of the saturated
strict transform $\widetilde\F$ near $\widetilde p$. On
$\widetilde X\setminus E$, the vector fields $\widetilde v$ and the
pull-back of $v$ define the same line field. Consequently, there
exists a nonzero meromorphic germ $\rho$ at $\widetilde p$ such that
\[
\widetilde v(\pi^*\varphi)
=
\rho\,\pi^*(v(\varphi)),
\qquad
\forall\varphi\in\OO_{X,0}.
\]
After clearing denominators, there exist nonzero holomorphic germs
$a,b\in\OO_{\widetilde X,\widetilde p}$ such that
\[
a\,\widetilde v(\pi^*\varphi)
=
b\,\pi^*(v(\varphi)),
\qquad
\forall\varphi\in\OO_{X,0}.
\]
The same identity holds after passing to the completions. Since
$\widehat v(\widehat f)=0$, we obtain
\[
a\,\widehat{\widetilde v}
\bigl(\pi^*\widehat f\bigr)=0.
\]
The completed local ring
$\widehat{\OO}_{\widetilde X,\widetilde p}$ is an integral domain,
and $a\neq0$. Therefore
\[
\widehat{\widetilde v}
\bigl(\pi^*\widehat f\bigr)=0.
\]
Since $\widetilde p\in E$ was arbitrary, $\pi^*\widehat f$ is a
formal first integral of $\widetilde\F$ along $E$.
\end{proof}

\subsection{Auxiliary Lemmas}

\begin{lema}
\label{lemma:6}\normalfont
Let $G<\Diff(\C,0)$ be a group of germs of one-variable complex analytic
\textup{(not necessarily finitely generated)} diffeomorphisms at $0\in\C$.
If there is a non-constant formal function
$\hat f\in \widehat{\OX}_{\C,0}$, $\hat f(0)=0$, such that
\[
        \hat f\circ \varphi = \hat f, \qquad \forall\varphi\in G,
\]
then $G$ is finite.
\end{lema}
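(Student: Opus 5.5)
The plan is to reduce to the structure of the formal series $\hat f$ in one variable. Write
\[
\hat f(z)=a_k z^k+a_{k+1}z^{k+1}+\cdots,\qquad a_k\neq0,\ k\geq1,
\]
which is possible since $\hat f$ is nonconstant and $\hat f(0)=0$. After replacing $\hat f$ by $a_k^{-1}\hat f$, we may assume $a_k=1$.

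\textbf{Formal normal form.} The first step is to show that there is a formal diffeomorphism $\hat\psi(z)=z+\cdots\in\widehat{\OO}_{\C,0}$ with $\hat f=\hat\psi^{\,k}$. Indeed, $\hat f=z^k(1+\hat u)$ with $\hat u(0)=0$, and $(1+\hat u)^{1/k}$ exists as a formal series via the binomial series. So $\hat f=\hat\psi^{\,k}$ with $\hat\psi=z(1+\hat u)^{1/k}$.

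\textbf{Multipliers lie in the $k$-th roots of unity.} Take $\varphi\in G$ and write $\varphi(z)=\lambda z+\cdots$. Comparing the lowest-order terms in $\hat f\circ\varphi=\hat f$ gives $\lambda^k=1$. Hence the homomorphism
\[
G\to\C^*,\qquad \varphi\mapsto\varphi'(0),
\]
has image contained in $\mu_k$, the group of $k$-th roots of unity.

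\textbf{The kernel is trivial.} This is the key step. Suppose $\varphi\in G$ is tangent to the identity, so $\varphi(z)=z+bz^{m+1}+\cdots$. Conjugating by $\hat\psi$, put $\hat g=\hat\psi\circ\varphi\circ\hat\psi^{-1}$, a formal diffeomorphism tangent to the identity. Then
\[
\hat g(w)^k=\hat f(\varphi(\hat\psi^{-1}(w)))=\hat f(\hat\psi^{-1}(w))=w^k .
\]
Hence $\hat g(w)=\zeta w$ for some formal unit $\zeta$ with $\zeta^k=1$. Because $\C[[w]]$ is a domain and $\zeta$ reduces to a root of unity at $w=0$, one gets $\zeta$ constant: the factor $(\hat g/w)^k=1$ forces $\hat g/w$ to be a root of the polynomial $T^k-1$, which has only constant roots in the domain $\C[[w]]$. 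Tangency to the identity then gives $\zeta=1$. Thus $\hat g=\mathrm{id}$, and therefore $\varphi=\mathrm{id}$ as a formal series, hence as a convergent germ.

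\textbf{Conclusion.} The multiplier map is injective with image in $\mu_k$. So $G$ embeds into a cyclic group of order $k$, and is finite (indeed cyclic of order dividing $k$).

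The main obstacle is the kernel step, specifically making sure that conjugation by the merely formal $\hat\psi$ is legitimate. Composition of formal series without constant term is well defined, and invertibility of $\hat\psi$ follows from $\hat\psi'(0)=1$. The same conjugation argument applied to a general $\varphi$ shows directly that $\hat\psi\circ\varphi\circ\hat\psi^{-1}=\lambda w$ with $\lambda^k=1$. This is an alternative route that also linearizes the action formally and avoids any finite-generation or pseudo-orbit hypothesis.
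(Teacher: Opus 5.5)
Your proof is correct and follows essentially the same route as the paper: you write $\hat f=\hat\psi^{k}$ for a formal coordinate $\hat\psi$, conjugate each $\varphi\in G$ by $\hat\psi$ to get $\tilde\varphi(w)^k=w^k$, use that $\C[[w]]$ is a domain to force $\tilde\varphi(w)=\xi w$ with $\xi\in\mu_k$, and conclude that $G$ embeds in $\mu_k$. Your separate multiplier step and kernel step are only a reorganization of the paper's single homomorphism $\varphi\mapsto\xi_\varphi$, since $\xi_\varphi=\varphi'(0)$.
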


\begin{proof}
Let
\[
m:=\ord_0(\widehat f).
\]
We may write
\[
\widehat f(z)=z^m\widehat u(z),
\]
where $\widehat u\in\C[[z]]$ is a unit. Since every unit in
$\C[[z]]$ admits an $m$-th root, choose a unit
$\widehat v\in\C[[z]]$ such that
\[
\widehat u(z)=\widehat v(z)^m.
\]
Define
\[
\widehat\psi(z):=z\widehat v(z).
\]
Since $\widehat\psi'(0)=\widehat v(0)\neq0$, the series
$\widehat\psi$ is a formal change of coordinate. Moreover,
\[
\widehat f(z)=\widehat\psi(z)^m.
\]
Thus, in the new formal coordinate $w=\widehat\psi(z)$,
\[
\widehat f\circ\widehat\psi^{-1}(w)=w^m.
\]

For $\varphi\in G$, set
\[
\widetilde\varphi
:=
\widehat\psi\circ\varphi\circ\widehat\psi^{-1}.
\]
The identity $\widehat f\circ\varphi=\widehat f$ gives
\[
\widetilde\varphi(w)^m=w^m.
\]
Therefore
\[
\widetilde\varphi(w)^m-w^m
=\prod_{\xi\in\mu_m}
\bigl(\widetilde\varphi(w)-\xi w\bigr)=0.
\]
Since $\C[[w]]$ is an integral domain, one of these factors must
vanish. Thus, for some $\xi_\varphi\in\mu_m$,
\[
\widetilde\varphi(w)=\xi_\varphi w
\]
and $\xi_\varphi^m=1$.

Consequently, the map
\[
G\longrightarrow\mu_m,
\qquad
\varphi\longmapsto\xi_\varphi,
\]
is an injective group homomorphism. Therefore
\[
|G|\leq m,
\]
and $G$ is finite.
\end{proof}

The following lemma will be useful: Let $\pi\colon(\widetilde X,E)\to (X,0)$
be a resolution of the pair $(X,\F)$, with lifted foliation
$\widetilde\F=\pi^*\F$. Assume that we have a formal function $\widehat F$ along $E$, which is a
formal first integral for $\widetilde\F$. Choose a component $E_j\subset E$,
a regular point
\[
       q_j\in L_j:=E_j\setminus \Sing(\widetilde\F),
\]
and a transverse section $\Sigma_j$, $\Sigma_j\cap E=\{q_j\}$.

\begin{lema}
\label{lemma:7}\normalfont A holonomy map $h\in \Hol(\widetilde\F,L_j,\Sigma_j,q_j)$ leaves invariant the restriction $\widehat F|_{\Sigma_j}$.
\end{lema}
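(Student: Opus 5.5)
The plan is to argue locally along a path in $L_j$ and use flow boxes. Fix a loop $\gamma\colon[0,1]\to L_j$ based at $q_j$ representing the element of $\pi_1(L_j,q_j)$ whose holonomy is $h$. Since $\gamma$ avoids $\Sing(\widetilde\F)$ and every component of $E$ is invariant, we cover $\gamma([0,1])$ by finitely many flow-box charts $W_0,\dots,W_s$ of $\widetilde\F$. In each chart we use coordinates $(x_i,y_i)$ with $\widetilde\F=\{dy_i=0\}$ and $E\cap W_i=\{y_i=0\}$. Here we assume $\gamma$ stays away from the corners; this is harmless because the corners lying on $E_j$ are singular points of $\widetilde\F$ (both components are invariant), so they are already removed from $L_j$.

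First, we show that in a single flow box the formal first integral depends only on $y_i$. The germ of $\widehat F$ at a point $p\in W_i\cap E$ is an element of $\widehat\OO_{\widetilde X,p}\simeq\C[[x_i-x_i(p),y_i]]$, and it is annihilated by the completed derivation $\partial/\partial x_i$. A formal power series killed by $\partial/\partial x_i$ lies in $\C[[y_i]]$, so $\widehat F_p=\widehat\phi_{i,p}(y_i)$ for a formal series $\widehat\phi_{i,p}$. The pointwise germs are compatible, because $\widehat F$ is a formal function along $E$, i.e.\ an element of $\varprojlim_k\OO/\mathcal J^{k+1}$. Therefore $\widehat\phi_{i,p}$ is locally constant in $p$ along the connected set $W_i\cap E$. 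Concretely, the $k$-jet in $y_i$ is a holomorphic function of $x_i$ with zero derivative. Hence in each chart there is a single formal series $\widehat\phi_i(y_i)$ representing $\widehat F$ along $W_i\cap E$.

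Second, we compare consecutive charts. On $W_i\cap W_{i+1}$ the first integrals $y_i$ and $y_{i+1}$ are related by a holomorphic transition $y_{i+1}=\tau_i(y_i)$, and the holonomy $h$ is the composition of these transitions, read on $\Sigma_j=\Sigma_0=\Sigma_s$ through $y_0$. Uniqueness of the representation at a common point of $W_i\cap W_{i+1}\cap E$ gives
\[
\widehat\phi_i=\widehat\phi_{i+1}\circ\tau_i .
\]
Composing along the chain, with the identification $\widehat\phi_s=\widehat\phi_0$ forced by $W_s=W_0$ at the base point, we obtain $\widehat\phi_0\circ h=\widehat\phi_0$. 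Since $\widehat F|_{\Sigma_j}$ is $\widehat\phi_0$ in the coordinate $y_0|_{\Sigma_j}$, this is exactly the claimed invariance. Formal composition with a holomorphic germ fixing $0$ is well defined and associative, so these identities make sense.

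The main obstacle is the first step, namely justifying rigorously that a formal function along $E$ restricts consistently along $L_j$. The local statement ($\partial_x\widehat\phi=0\Rightarrow\widehat\phi\in\C[[y]]$) is elementary. The delicate point is identifying the $\mathcal J$-adic completion with pointwise completions glued compatibly, since $\mathcal J=\mm_0\OO$ may be non-reduced. I would handle this as in Proposition~\ref{prop4}: $\mathcal J$ and the reduced ideal of $E$ have the same radical, so near a regular point of $E_j$ one may use the topology of the ideal $(y_i)$. In that topology the local completion is $\OO_{E}(W_i\cap E)[[y_i]]$, and in this ring the argument above is transparent.
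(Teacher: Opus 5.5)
Your proof is correct and follows the same strategy as the paper: cover the loop by a finite chain of flow boxes along $L_j$, show that $\widehat F$ is invariant inside each box, and compose. The only difference is the local step. The paper differentiates $\varphi_t^*\widehat F$ along the flow of a generator $\tilde v$, whereas you rectify the foliation and use that the kernel of $\partial/\partial x_i$ on $\C[[x_i,y_i]]$ is $\C[[y_i]]$. Your version has the minor advantage of giving invariance directly under the transition maps $\tau_i$ between transversals, so it makes explicit the passage from fixed-time flow maps to holonomy that the paper leaves implicit.
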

\begin{proof}
Cover a path defining $h$ by finitely many flow boxes contained in the
regular locus of $\widetilde\F$ and meeting $E$. In each flow box, let
$\tilde v$ be a holomorphic vector field generating $\widetilde\F$ and
let $\varphi_t$ denote its local flow. Since $E$ is
$\widetilde\F$-invariant, $\varphi_t$ preserves the ideal of $E$ and
therefore induces an automorphism $\varphi_t^*$ of the formal completion
along $E$. Since $\widehat F$ is a formal first integral,
$\tilde v(\widehat F)=0$ in this completion. Hence, formally,
\[
       \frac{d}{dt}\,\varphi_t^*\widehat F
       =\varphi_t^*\bigl(\tilde v(\widehat F)\bigr)=0,
\]
and therefore $\varphi_t^*\widehat F=\widehat F$.

The holonomy map $h$ is obtained by following the leaves of $\widetilde\F$
and composing finitely many such local flow maps between transversals;
therefore
\[
       \widehat F|_{\Sigma_j}\circ h=\widehat F|_{\Sigma_j}.       \qedhere
\]
\end{proof}

Consequently, we may conclude:

\begin{propo}\label{Proposition:6}\normalfont
Let $\F$ be a germ of holomorphic foliation on a germ of complex analytic
normal space $(X,0)$ of dimension two. Assume that $\F$ admits a formal
first integral $\hat f\in\widehat\OX_{X,0}$. Then:
\begin{enumerate}[label=\textup{(\arabic*)}]
\item $\F$ is non-dicritical;
\item $\widetilde\F$ exhibits no saddle-nodes;
\item every singularity $\tilde p\in\Sing(\widetilde\F)\subset E$ is
analytically linearizable of the form
\[
       kx\,dy+l y\,dx=0,\qquad k,l\in\N,\quad \gcd(k,l)=1,
\]
where
\[
       \{y=0\}\subset E\subset \{xy=0\};
\]
\item writing $E=\displaystyle\bigcup_{j=1}^r E_j$ in compact components,
the holonomy group of every component is finite:
\[
H(E_j)\subset\Diff(\Sigma_j,q_j),
\]
where
\[
       q_j\in E_j\setminus\Sing(\widetilde\F), \qquad
       \Sigma_j\pitchfork E, \qquad
       \Sigma_j\cap E=\Sigma_j\cap E_j=\{q_j\},
\]
for every choice of $q_j,\Sigma_j$ and every $j\in\{1,\ldots,r\}$.
\end{enumerate}
\end{propo}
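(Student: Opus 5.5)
The plan is to lift $\hat f$ to the resolution and derive each item from the invariance of the lifted formal first integral under holonomy. By Proposition~\ref{prop4}, $\widehat F:=\pi^*\hat f$ is a nonconstant formal first integral of $\widetilde\F$ along $E$. Subtracting $\hat f(0)$, we may assume $\widehat F$ vanishes on $E$. By Lemma~\ref{lemma:formal-injectivity} its germ at every point of $E$ is nonzero.

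I would prove (1) first. Suppose some component $E_j$ is not invariant. By the lemma characterizing dicriticalness, $E_j$ is then generically transverse to $\widetilde\F$. Near a generic point $q\in E_j$ choose coordinates $(x,y)$ with $E_j=\{x=0\}$ and leaves $\{y=\mathrm{const}\}$. The equation $\widehat{\widetilde v}(\widehat F)=0$, with $\widetilde v=\partial/\partial x$, forces $\widehat F=\widehat F(y)$ formally. Since $\widehat F$ is constant on the connected divisor $E\supset E_j$, its restriction $\widehat F(0,y)$ is constant, so $\widehat F(y)$ is constant, hence $\widehat F=0$. Formal injectivity at $q$ then gives $\hat f=0$, a contradiction. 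This step needs a bit of care to interpret ``constant on $E$'' formally, by restricting the $0$-jet along $E$. Consequently every $E_j$ is invariant.

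Next, (4). For a component $E_j$, take $q_j$ and $\Sigma_j$ as stated. Lemma~\ref{lemma:7} shows that every $h\in H(E_j)$ preserves $\widehat F|_{\Sigma_j}$. This restriction is nonconstant. Indeed, if $\widehat F|_{\Sigma_j}=0$, then in flow-box coordinates $(x,y)$ with $E_j=\{y=0\}$ and leaves $\{y=c\}$, first integrality gives $\widehat F=\widehat F(y)=\widehat F|_{\Sigma_j}=0$, contradicting injectivity. Lemma~\ref{lemma:6} therefore gives that $H(E_j)$ is finite.

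For (2), suppose $\tilde p$ is a saddle-node. If its strong separatrix lies in some $E_i$, its holonomy is a nonperiodic tangent-to-identity element of $H(E_i)$, which contradicts (4). If instead the strong separatrix is not contained in $E$, I would use the local formal first integral at $\tilde p$. At a saddle-node $x^{k+1}dy-y(1+\lambda x^k)dx$, every formal first integral is constant: the Dulac normal form computation shows $\partial_x$-type equations force $\widehat F\in\C$. This contradicts nonvanishing at $\tilde p$. It is simpler to use this local formal obstruction in all cases. For (3), by (2) the point $\tilde p$ is nondegenerate and reduced, and a formal first integral exists at $\tilde p$. The holonomy of an invariant branch in $E$ is periodic by (4), so the eigenvalue ratio is a negative rational $-\ell/k$. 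The Mattei--Moussu linearization theorem for periodic holonomy then gives the normal form $kx\,dy+\ell y\,dx$. We can choose coordinates with $\{y=0\}\subset E$; if $\tilde p$ is a corner, $E=\{xy=0\}$. When $\tilde p$ is not a corner, the other separatrix $\{x=0\}$ is transverse to $E$. This gives $\{y=0\}\subset E\subset\{xy=0\}$.

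I expect the main obstacle to be the saddle-node exclusion and the nonconstancy of $\widehat F|_{\Sigma_j}$. Both need formal injectivity pointwise, together with a careful argument that a formal first integral depends only on the transverse variable in a flow box.
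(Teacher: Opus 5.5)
Your proposal is correct, but it proves items (2)--(4) by a different route from the paper.

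\textbf{The paper's route.} It handles (2) and (3) purely locally. At each $\tilde p\in\Sing(\widetilde\F)$ the germ $(\pi^*\hat f)_{\tilde p}$ is a formal first integral, so Mattei--Moussu's formal-to-holomorphic theorem (Theorem~A on page~472 of \cite{MatteiMoussu1980}) gives a holomorphic first integral there. This excludes saddle-nodes and forces a negative rational eigenvalue quotient with primitive integral $x^ly^k$, hence linearizability. Only afterwards does it prove (4): it writes $\widehat F=x^{m_j}\widehat U$, chooses $q_j$ off the zeros of $\widehat U|_{E_j}$ so that $\ord_{q_j}(\widehat F|_{\Sigma_j})=m_j$, and then conjugates to other base points.

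\textbf{Your route.} You order the argument (1)$\to$(4)$\to$(2),(3).
\begin{itemize}
\item For (4), your flow-box observation that $\widehat F$ depends only on the transverse variable, combined with formal injectivity, gives nonconstancy of $\widehat F|_{\Sigma_j}$ at every regular point at once. No genericity choice or conjugation step is needed.
\item For (2), the coefficient recursion in Dulac's formal normal form shows that a saddle-node has no nonconstant formal first integral. This avoids the formal-to-convergent theorem altogether.
\item For (3), you use periodicity of the separatrix holonomy inherited from the finite group $H(E_j)$, together with the Mattei--Moussu holonomy linearization theorem. This is exactly the mechanism of Claim~\ref{claim2} in the proof of Theorem~\ref{thm:A}.
\end{itemize}

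\textbf{Trade-off.} Your (3) depends on the global finiteness statement (4). The paper's (2)--(3) are local and independent of (4), but they rely on Mattei--Moussu's theorem as a black box. For (1) the two arguments share one idea: a nonzero formal integral vanishing on $E_j$ forces invariance. The paper shows this directly via $\widetilde v(x)\in(x)$; you argue by contradiction at a transverse point.

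\textbf{Two small corrections.} First, transversality of a non-invariant component comes from property (iv) of the resolution (or from finiteness of tangencies), not from the dicriticalness lemma. That lemma is what you need at the end, to turn invariance of all components into non-dicriticalness. Second, you do not need connectedness of $E$. The inclusion $\pi^\#(\mathfrak m_0)\subset\mathcal I_E$ already puts $\widehat F-\widehat F(0)$ in the completed ideal of $E$.
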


\begin{proof}
\textup{(1)} After subtracting its constant term, we may assume that
\[
\widehat f(0)=0.
\]
Put
\[
\widehat F:=\pi^*\widehat f.
\]
Then $\widehat F$ vanishes along every component of
$E=\pi^{-1}(0)$.

Fix a component $E_j$ and choose a generic smooth point
$q\in E_j$. Let $x=0$ be a reduced local equation for $E_j$ near
$q$. Since $\widehat F$ is nonzero, we may write
\[
\widehat F=x^m\widehat U,
\qquad m\geq1,
\]
where $\widehat U$ is not divisible by $x$. Let $\widetilde v$ be a
local generator of $\widetilde\F$. Since $\widehat F$ is a formal
first integral,
\[
0=\widehat{\widetilde v}(\widehat F)
=x^{m-1}\left(
m\widehat U\,\widetilde v(x)
+x\widehat{\widetilde v}(\widehat U)
\right).
\]
Reducing the resulting identity modulo $x$, we obtain
\[
m\,(\widehat U|_{E_j})\,
(\widetilde v(x)|_{E_j})=0.
\]
At a generic point of $E_j$, $\widehat U|_{E_j}\neq0$. Hence
\[
\widetilde v(x)|_{E_j}=0,
\]
on a nonempty open subset of $E_j$. Since
$\widetilde v(x)|_{E_j}$ is holomorphic on the regular part of the
irreducible component $E_j$, the identity theorem gives
\[
\widetilde v(x)|_{E_j}\equiv0.
\]
Equivalently, $\widetilde v(x)\in(x)$ along $E_j$, and therefore
$\widetilde v$ is tangent to the entire component $E_j$. Thus every
component of $E$ is $\widetilde\F$-invariant, and consequently
$\F$ is non-dicritical.

\textup{(2)} Given a singularity
$\tilde p\in\Sing(\widetilde\F)\subset E$, the formal function
$(\pi^*\hat f)_{\tilde p}\in\widehat\OX_{\widetilde X,\tilde p}$ is a
formal first integral for the germ $\widetilde\F_{\tilde p}$ of the
foliation $\widetilde\F$ at $\tilde p\in\widetilde X$.
By Mattei--Moussu, Theorem A, page 472, the germ $\widetilde\F_{\tilde p}$
admits a nonconstant holomorphic first integral. Since $\pi$ is a
resolution of the pair $((X,0),\F)$, every singularity of
$\widetilde\F$ is simple, and hence is either a reduced nondegenerate
singularity or a saddle-node. A saddle-node cannot admit a nonconstant
holomorphic first integral. Therefore $\widetilde\F$ has no saddle-nodes,
which proves \textup{(2)}.

It remains to identify the nondegenerate singularities. A reduced
nondegenerate singularity admitting a holomorphic first integral has two
analytic separatrices and a negative rational quotient of eigenvalues.
Thus, after choosing relatively prime integers $k,l\in\N$ and suitable
analytic coordinates, a primitive holomorphic first integral can be
written as
\[
F(x,y)=x^l y^k.
\]
Consequently, the foliation is analytically equivalent to
\[
kx\,dy+l y\,dx=0.
\]
Since the exceptional divisor is invariant and has normal crossings, the
coordinates may be chosen so that one of its local branches is
$\{y=0\}$ and
\[
\{y=0\}\subset E\subset\{xy=0\}.
\]
This proves \textup{(3)}.

We now prove \textup{(4)}. Following the notation introduced above in the
proof, we consider a component $E_j$ of $E$ and choose a generic
nonsingular point $q_j\in E_j\setminus\Sing(\widetilde\F)$,
and a transverse disk $\Sigma_j$ centered at
$\{q_j\}=\Sigma_j\cap E_j$.
Put $\widehat F:=\pi^*\widehat f$ and denote by
$\widehat F|_{\Sigma_j}$ its restriction to the formal transverse
section $(\Sigma_j,q_j)$.
For every $h\in H(E_j)$, Lemma~\ref{lemma:7} gives
\[
       \bigl(\widehat F|_{\Sigma_j}\bigr)\circ h
       =\widehat F|_{\Sigma_j}.
\]
Choose a local coordinate $x$ transverse to $E_j$, so that
$E_j=\{x=0\}$, and write
\[
\widehat F=x^{m_j}\widehat U,
\qquad m_j\geq1,
\]
where $\widehat U$ is not divisible by $x$. We choose $q_j$ outside
the zero set of the leading coefficient $\widehat U|_{E_j}$. Hence
$\widehat U|_{\Sigma_j}$ is a unit, and
\[
\operatorname{ord}_{q_j}
\bigl(\widehat F|_{\Sigma_j}\bigr)=m_j\geq1.
\]
Thus $\widehat F|_{\Sigma_j}$ is nonzero, nonconstant, and vanishes at
$q_j$. Lemma~\ref{lemma:6} implies that
$H(E_j)$ is a finite group. For any
other regular point $q_j'\in E_j\setminus\Sing(\widetilde\F)$, transport
along the connected leaf
$E_j\setminus\Sing(\widetilde\F)$ conjugates the corresponding
holonomy group to the one just considered. Therefore the
holonomy group is finite for every choice of a regular
base point and transverse section on $E_j$.
\end{proof}

\subsection{Foliations admitting formal first integrals: Theorem \ref{thm:C}}

\begin{teo3}\normalfont
Let $\F$ be a germ of holomorphic foliation by curves on a germ of a
normal-tree complex analytic surface $(X,0)$. If $\F$ admits a formal
first integral $\hat f\in\widehat\OX_{X,0}$, then $\F$ admits a holomorphic
first integral $f\in\OX_{X,0}$.
\end{teo3}
\begin{proof}
Consider a resolution of singularities $\pi\colon\widetilde X\to X$
for the pair $(X,\F)$, with exceptional divisor $E=\pi^{-1}(0)$.
Given a formal first integral $\hat f\in\widehat\OX_{X,0}$ for the foliation $\F$, there is a lift
$\pi^*\hat f\in\widehat\OX_{\widetilde X,E}$ of $\hat f$ to a formal first
integral for the foliation $\widetilde\F=\pi^*(\F)$ along $E$
(cf. Proposition~\ref{prop4}). Moreover, according to Proposition~\ref{Proposition:6} we have:
\begin{enumerate}[label=\textup{(\arabic*)}]
\item $\F$ is non-dicritical;
\item every singularity $\tilde p\in\Sing(\widetilde\F)\subset E$ is
analytically linearizable of the form
\[
       kx\,dy+l y\,dx=0,
       \qquad k,l\in\N,\quad \gcd(k,l)=1.\]
where $\{y=0\}\subset E\subset\{xy=0\}$;
\item the holonomy group of every irreducible component of $E$ is finite.
\end{enumerate}
Because $(X,0)$ is normal-tree, the dual graph of $E$ is a finite tree.
At every singular point $p\in E$,
\textup{(2)} gives a primitive holomorphic first integral
$Q_p=x^{l_p}y^{k_p}$.  The formal factorization theorem of
Mattei--Moussu applied at $p$ gives a formal one-variable series
$\widehat\psi_p$ such that
\[
 (\pi^*\widehat f)_p=\widehat\psi_p\circ Q_p.
\]
The component holonomies preserve $\pi^*\widehat f$ by
Lemma~\ref{lemma:7}, while the puncture invariance groups preserve it by
the displayed factorization. Thus the formal clause of
Lemma~\ref{lemma:tree-gluing} applies. It gives a single nonconstant
holomorphic first integral $F$ in a neighborhood of the entire exceptional
divisor $E$ and, throughout the tree induction, preserves a factorization
\[
 \pi^*\widehat f=\widehat\Phi\circ F.
\]
This is the arbitrary-genus version of the transversally formal
construction in \cite[Chapter~V, \S1, Lemma~$3'$]{MatteiMoussu1980}.

The exceptional fiber $E=\pi^{-1}(0)$ is connected, by the connectedness
theorem for a resolution of a normal surface germ. The function $F$ is
constant on the regular part of each invariant component of $E$, and
continuity at their intersection points shows that it has a single
constant value on $E$. After subtracting this constant, we may assume
that $F|_E=0$. Since $\pi$ is biholomorphic away from $E$, the function
\[
f:=F\circ\pi^{-1}
\]
is a holomorphic first integral on a punctured neighborhood of $0$
in $X$. Finally, since $X$ is normal and $\{0\}$ has codimension two,
the Hartogs--Riemann extension theorem gives a unique extension
\[
f\in\OX_{X,0}.
\]
\end{proof}

\begin{proof}[Proof of Theorem D] We assume that $(X,0)\subset \C^m$.
The proof is a consequence of Theorem C and the
Prolongation Theorem and goes as
follows:

If $n=2$, then a codimension-one foliation on $(X,0)$ is a foliation by
curves, and the conclusion follows directly from Theorem~C.

Assume now that $n\ge3$. We consider a generic linear subspace $H\subset\C^m$ of codimension $n-2$, in general
position with respect to the pair $((X,0),\F)$, and let $j\colon H\hookrightarrow\C^m$ denote the inclusion.
\[
       (X^*,0)=j^{-1}(X,0)=(j^{-1}(X),0)=(X\cap H,0)\hookrightarrow(\C^m,0).
\]
Let $i\colon(X^*,0)\hookrightarrow(X,0)$ denote the induced inclusion and
put $\F^*=i^*(\F)$.
Choose a holomorphic $1$-form $\omega\in\Omega^1_{X,0}$ defining $\F$ on
$(X,0)$ and let $\omega^*=i^*(\omega)\in\Omega^1_{X^*,0}$. Given a formal first integral $\hat f\in\widehat\OX_{X,0}$ for
$\F$ on $(X,0)$, we have $d\hat f\wedge\omega=0$ as a formal $2$-form. Consider the pull-back $i^*\colon\Omega^1_{X,0}\to\Omega^1_{X^*,0}$
and its extension to formal differential forms. Applying the formal
pull-back $\widehat i^{\,*}$, we obtain
\[
       d\bigl(\widehat i^{\,*}\widehat f\bigr)\wedge i^*\omega
       =\widehat i^{\,*}(d\widehat f\wedge\omega)=0.
\]
Consequently,
\[
d\bigl(\widehat i^{\,*}\widehat f\bigr)\wedge\omega^*=0.
\]
After subtracting the constant term, we may assume that
\[
\widehat f(0)=0.
\]
Let
\[
r:=\ord_0(\widehat f)
\]
and denote by
\[
\operatorname{in}_r(\widehat f)
\in\mathfrak m_0^r/\mathfrak m_0^{r+1}
\]
its first nonzero homogeneous term. In addition to the preceding
genericity conditions, we choose $H$ so that
\[
\operatorname{in}_r(\widehat f)
\big|_{C_0(X)\cap H}\not\equiv0.
\]
Here the tangent cone is understood scheme-theoretically. Put
\[
g:=\operatorname{in}_r(\widehat f)
\in\operatorname{gr}_{\mathfrak m_0}\OO_{X,0}.
\]
By hypothesis, the tangent cone $C_0(X)$, or equivalently
$\operatorname{gr}_{\mathfrak m_0}\OO_{X,0}$, is reduced. Hence the nonzero
homogeneous element $g$ does not vanish identically on at least one
irreducible component of $C_0(X)$. A generic two-dimensional linear section
meeting that component is therefore not contained in the zero set of $g$.
Consider the incidence subset
\[
\mathcal B_g:=
\left\{
H\in\operatorname{Gr}(m-n+2,m):
g|_{C_0(X)\cap H}=0
\right\}.
\]
This is consequently a Zariski-closed proper subset of the Grassmannian.
Indeed, on the universal incidence family over the Grassmannian,
restriction of $g$ is a section of the appropriate homogeneous sheaf; the
locus on which this section vanishes identically on the fibre is closed by
upper semicontinuity of the rank of the corresponding finite-dimensional
restriction map.  It is proper because a component on which $g$ is
nonzero admits a linear section meeting its nonvanishing locus.
Thus the complement of
$\mathcal B_g$ is a nonempty Zariski-open set. Intersecting it with the
open sets expressing the normal-tree, transversality, and
quotient-prolongation-admissibility conditions, we may impose all four
requirements simultaneously, because the Grassmannian is irreducible and
a finite intersection of nonempty Zariski-open subsets is nonempty.

The pull-back
\[
\widehat f^{\,*}:=\widehat i^{\,*}\widehat f\in\widehat\OX_{X^*,0}
\]
is a formal first integral for the foliation defined by $\omega^*$ on
$(X^*,0)$, i.e., for the pull-back foliation
$\F^*=i^*(\F)$. The transversality condition guarantees that
$\F^*$ is a foliation by curves. Moreover, the condition on the
initial term implies that
\[
\widehat f^{\,*}\neq0.
\]
Since $\widehat f^{\,*}$ has zero constant term, it is nonconstant.
Applying Theorem~\ref{thm:C}, we conclude that $\F^*$ has a holomorphic first
integral on $(X^*,0)$. Invoking the Prolongation
Theorem~\ref{thm:E}, we obtain a
holomorphic first integral for $\F$ on $(X,0)$.
\end{proof}

\end{document}